\newtheorem{theo}{Theorem}[section]
\newtheorem{lemm}[theo]{Lemma}
\newtheorem{prop}[theo]{Proposition}
\newtheorem{rema}[theo]{Remark}
\newtheorem{defi}[theo]{Definition}
\newtheorem{assu}{A}
\renewcommand{\Re}{\operatorname{Re}}
\renewcommand{\Im}{\operatorname{Im}}
\numberwithin{equation}{section}
\begin{document}

\title[Subwavelength resonances in high contrast elastic media]{
Analysis of subwavelength resonances in high contrast elastic media by a variational method}

\author{Bochao Chen}
\address{School of Mathematics and Statistics, Center for Mathematics and Interdisciplinary Sciences, Northeast Normal University, Changchun, Jilin 130024, China}
\email{chenbc758@nenu.edu.cn}

\author{Yixian Gao}
\address{School of Mathematics and Statistics, Center for Mathematics and Interdisciplinary Sciences, Northeast Normal University, Changchun, Jilin 130024, China}
\email{gaoyx643@nenu.edu.cn}

\author{Peijun Li}
\address{LSEC, ICMSEC, Academy of Mathematics and Systems Science, Chinese Academy of Sciences, Beijing 100190, China, and School of Mathematical Sciences, University of Chinese Academy of Sciences, Beijing 100049, China}
\email{lipeijun@lsec.cc.ac.cn}

\author{Yuanchun Ren}
\address{School of Mathematics and Statistics, Center for Mathematics and Interdisciplinary Sciences, Northeast Normal University, Changchun, Jilin 130024, China}
\email{renyuanchun@nenu.edu.cn}

\thanks{The research of BC was supported in part by NSFC grant (project number 12471181) and the Fundamental Research Funds for the Central Universities (project numbers, 2412022QD032, 2412023YQ003 and GFPY202303). The research of YG was supported by NSFC grants (project numbers, 12371187 and 12071065) and Science and Technology Development Plan Project of Jilin Province 20240101006JJ}

\subjclass[2010]{35B34, 35Q74, 45M05, 47J30, 74J20}

\keywords{Subwavelength resonances, elastic wave scattering, high contrast resonators, Dirichlet-to-Neumann map, variational method}

\begin{abstract}
In this paper, we present a mathematical study of wave scattering by a hard elastic obstacle embedded in a soft elastic body in three dimensions. Our contributions are threefold. First, we characterize subwavelength resonances using the Dirichlet-to-Neumann map and an auxiliary variational form, showing that these resonances occur when the determinant of a specific matrix vanishes. Second, employing Gohberg--Sigal theory and Puiseux series expansions for multi-valued functions, we derive the asymptotic expansions of subwavelength resonant frequencies in the low-frequency regime through this explicit characterization. Finally, we provide a representation of the scattered field in the interior domain, where the enhancement coefficients are governed by the imaginary parts of the resonant frequencies. Additionally, we establish the transversal and longitudinal far-field patterns for the scattered field in the exterior domain.
\end{abstract}

\maketitle

\section{Introduction}\label{sec:1}

Metamaterials are engineered materials with unique properties that enable the manipulation and control of waves, resulting in remarkable effects such as super-resolution imaging \cite{CLHGMS2010}, invisibility cloaking \cite{KLSW2014,LLL2018}, waveguides \cite{AHY2021}, and superfocusing \cite{LPLFLT2015,AFGLZ2017}. High contrast resonators, which can generate subwavelength resonances, are often ideal components for constructing metamaterials due to their low dissipation loss, in contrast to nanoparticles with negative parameters, which are another type of metamaterial. The term ``subwavelength'' refers to obstacles that are significantly smaller than the wavelength of the incident wave. Resonators are devices designed to amplify acoustic, electromagnetic, or elastic waves at specific frequencies. The study of subwavelength resonances caused by high contrast has received considerable attention in recent years, though most research has focused on acoustic and electromagnetic scattering problems.

In acoustics, the resonators of interest are Minnaert bubbles \cite{AFGLZ2017,Fepponmodal,Ammari2018}, which have densities much lower than that of water. However, due to the instability of bubbles in liquid, soft elastic materials with small shear moduli, such as silicone rubber, are often used as the background medium for metamaterial design \cite{Calvo2015underwater}. For such coupled acoustic and elastic wave systems, a mathematical theory of subwavelength resonances has been developed in \cite{Li2022minnaert}. In electromagnetics, high contrast dielectric particles are commonly used to achieve subwavelength resonances \cite{AAMYB2016,Libowenmathematical}.

The study of elastic wave scattering poses significant challenges due to the coupling of transverse and longitudinal waves. It has been mathematically shown that polariton resonances caused by negative material parameters can be described as eigenvalue problems of the Neumann--Poincar\'{e} operator \cite{LLL2018,BochaoE}. In contrast, high contrast elastic materials can generate local resonances and band gaps, leading to wave attenuation, primarily observed through physical and numerical simulations \cite{Ding2007Metamaterial,sheng2003locally,LZMZYCS2000}. Lead balls coated with silicone rubber are frequently employed as resonators to induce subwavelength resonances, with dipole resonances shown to achieve negative mass density \cite{LZMZYCS2000,Liu2005Analytic}. This demonstrates that, while the mechanisms behind subwavelength resonances differ from those of polarization resonances, the effective medium theory provides a conceptual bridge between the two phenomena.

Despite the evidence presented in the physical literature, there remains a lack of comprehensive mathematical understanding of elastic scattering and subwavelength resonances caused by high contrast factors. To our knowledge, only the work by \cite{LZ2024} provides mathematical support for \cite{LZMZYCS2000,Liu2005Analytic,Ding2007Metamaterial}, which focus on the fabrication of sonic crystals with a hard solid core coated in soft elastic material. The approach in \cite{LZ2024} utilizes layer potential techniques and is closely related to the spectra of boundary integral operators. Additionally, we would like to highlight the reference \cite{FA2024}, which examines acoustic wave scattering induced by a Minnaert bubble through a variational approach. This method demonstrates considerable flexibility and has been extended to accommodate scenarios involving a finite number of resonators \cite{FCA2023} as well as infinite periodic chains of resonators \cite{ABCF2023} in acoustics.

Building on the insights provided by \cite{FA2024}, we analyze the scattering of a low-frequency incident wave by a hard elastic obstacle embedded in a soft elastic material from a variational perspective. We reformulate the elastic scattering problem in $\mathbb{R}^3$ as a boundary value problem defined on the bounded domain $D$ by introducing the Dirichlet-to-Neumann (DtN) map. Utilizing an auxiliary sesquilinear form, we characterize the well-posedness of the corresponding variational problem in terms of the determinant of a matrix. Unlike the well-established Poincar\'{e}--Wirtinger inequality applicable to the Helmholtz equation, the analysis of elastic waves is complicated by the intricate nature of the Lam\'{e} operator. Consequently, we must establish a new inequality to derive a sesquilinear form that is both bounded and coercive.

For the elastic scattering problem, the well-posedness analysis draws upon the Fredholm alternative as demonstrated in \cite{Li2019Inverse,Bao2021adaptive,DtN:Li} and employs the Lax--Milgram theorem from \cite{Gao2018Analysis} within a variational framework. Additionally, the space spanned by all linear solutions to the scalar equation:
\begin{align*}
\left\{ \begin{aligned}
&\Delta{u}=0&&\text{in } D,\\
&\nabla{u}\cdot\boldsymbol{\nu}=0&&\text{on } \partial D
\end{aligned}\right.
\end{align*}
is one-dimensional, enabling relevant calculations to be performed in the scalar case. Here $\boldsymbol{\nu}$ is the outward unit normal to the boundary $\partial D$. Conversely, the space spanned by solutions, satisfying the following vector system:
\begin{align}\label{traction 0}
\left\{ \begin{aligned}
&\mathcal{L}_{\lambda,\mu}\boldsymbol{u}=0&&\text{in } D,\\
&\partial_{\boldsymbol{\nu} }\boldsymbol{u}=0&&\text{on } \partial D
\end{aligned}\right.
\end{align}
with related symbols described in Section \ref{sec:2}, is six-dimensional. Thus, the resonant frequencies correspond to the eigenvalues of a matrix. The authors in \cite{Fepponmodal, FCA2023, ABCF2023} have employed the implicit function theorem to derive the asymptotic expansions of frequencies at resonance, contingent on the assumption that the eigenvalues of the correlation matrix are simple. Our analysis of resonance is grounded in Gohberg--Sigal theory \cite{Lay}, which includes the generalized argument principle and Rouch\'{e}'s theorem. Given that the eigenvalues of a matrix may exhibit multiplicities, it becomes essential to derive the expansion of Puiseux series for multi-valued functions.

The main contributions of this paper are threefold. First, to study the existence and uniqueness of the weak solution to the interior scattering problem, we introduce an auxiliary sesquilinear form that is both bounded and coercive. This is achieved by adding a suitable term:
\begin{equation*}
\sum_{i=1}^6\int_{D}\boldsymbol{u}\cdot\boldsymbol{\xi}_i\mathrm{d}\boldsymbol{x}\int_{D}
\overline{\boldsymbol{v}}\cdot\boldsymbol{\xi}_i\mathrm{d}\boldsymbol{x},
\end{equation*}
where $(\boldsymbol{\xi}_i)_{1\leq i\leq 6}$ is a set of canonical orthogonal basis vectors for the solution space of equation \eqref{traction 0}. In addition, the auxiliary variational form plays a crucial role in characterizing the resonance frequencies, specifically through the condition that the determinant of a matrix vanishes. In the absence of resonances, the solution can be expressed in the following decomposition form:
\begin{align*}
\boldsymbol{u}(\omega,\delta)=\sum^{6}_{j=1}s_{j}(\omega,\delta)\boldsymbol{w}_j(\omega,\delta)
+\boldsymbol{w}_{\boldsymbol f}(\omega,\delta),
\end{align*}
where $\boldsymbol{w}_j(\omega,\delta)$ and $\boldsymbol{w}_{\boldsymbol f}(\omega,\delta)$ are holomorphic functions in $\omega$ and $\delta$ and can be uniquely determined according to Lax--Milgram theorem. It is important to note that $\boldsymbol{w}_j$ is independent of the incident field and behaves as a small perturbation of the field $\boldsymbol{\xi}_j$?. When the incident wave is at any resonant frequency, the scattering of the field is primarily reflected in the resonance amplitudes $(s_{j})_{1\leq j\leq 6}$?. Second, we explicitly calculate the asymptotic expansions of the resonance frequencies, whose leading terms are associated with the eigenvalues of a real-valued symmetric positive definite matrix. These expansions clearly demonstrate that subwavelength behavior is determined by the high contrast in densities, even though the high contrast in the Lam\'{e} parameters is the primary cause of the resonances. Furthermore, the resonance frequencies exhibit non-positive imaginary principal terms and are symmetric with respect to the imaginary axis. Finally, using asymptotic analysis, we show that the expansion of the field in $D$ includes frequencies appearing in the denominator, indicating that the field will diverge when the incident frequency approaches any calculated resonant frequency. It is noteworthy that the imaginary components of the resonant frequencies play a crucial role in field enhancement within the resonator. In addition, we include the far-field pattern, which is divided into two components: the transversal and longitudinal far-field patterns.

The organizational structure of the paper is divided into four main sections. Section \ref{sec:2} presents the mathematical formulation of the elastic system, along with the definitions and properties of the layer potential operators and the DtN map. In Section \ref{sec:3}, we provide asymptotic expressions for the resonant frequencies based on an explicit characterization of subwavelength resonances. Section \ref{sec:4} focuses on the asymptotic expansion  and field enhancement in the domain $D$, as well as the far-field expansion of the scattering solution. Finally, Section \ref{sec:conclusion} concludes the paper with some remarks and suggestions for future research directions.

\section{Problem formulation and preliminaries}\label{sec:2}

In this section, we present the problem formulation of the elastic system, along with the definitions and key properties of the layer potential operators and the DtN map.

\subsection{Problem formulation}

Let $D\subset \mathbb{R}^3$ be a bounded and simply connected  domain with a boundary $\partial D$ of class $C^{2}$.
Define a fourth-rank isotropic elastic tensor $\widehat{\mathds{C}}=(\widehat{C}_{ijkl})^3_{i,j,k,l=1}$ by
\begin{align*}%\label{eq:et1}
\widehat{C}_{ijkl}:=\widehat{\lambda}\delta_{ij}\delta_{kl}+\widehat{\mu}(\delta_{ik}\delta_{jl}+\delta_{il}\delta_{jk}),
\end{align*}
where $\delta$ is the Kronecker delta, and $(\widehat\lambda,\widehat{\mu})$ are the
Lam\'{e} parameters, defined as
\begin{align*}%\label{eq:lp1}
(\mathbb{R}^3; \widehat\lambda, \widehat\mu)=(D; \widetilde{\lambda}, \widetilde{\mu})\cup (\mathbb{R}^3\backslash\overline{D}; \lambda, \mu),
\end{align*}
which describe the configuration of the elastic media. Moreover, we assume that the Lam\'{e} parameters $(\lambda,\mu)$ of the background medium in $\mathbb{R}^3\backslash\overline{D}$ satisfy the following strong convexity conditions:
\begin{align}\label{convexity}
\mu>0,\quad 3\lambda+2\mu>0.
\end{align}

Let $\boldsymbol{u}:=\boldsymbol{u}(\boldsymbol{x})=(u_i(\boldsymbol{x}))_{i=1}^3$ be the elastic displacement field. Define the Lam\'{e} operator $\mathcal{L}_{\widehat{\lambda},\widehat{\mu}}$ by
\begin{align*}
\mathcal{L}_{\widehat{\lambda},\widehat{\mu}}\boldsymbol{u}:=\nabla\cdot\widehat{\mathds{C}}\big(\frac{1}{2}(\nabla \boldsymbol{u}+\nabla\boldsymbol{u}^{\top})\big)=\widehat{\mu}\nabla\cdot(\nabla\boldsymbol{u}+\nabla\boldsymbol{u}^\top)+\widehat{\lambda}\nabla\nabla\cdot\boldsymbol{u},
\end{align*}
and define the conormal derivative on $\partial D $ by
\begin{align*}
\partial_{\boldsymbol{\nu}}\boldsymbol{u}:=\widehat{\lambda}(\nabla \cdot \boldsymbol{u})\boldsymbol{\nu}+\widehat{\mu}(\nabla \boldsymbol{u}+\nabla \boldsymbol{u}^\top)\boldsymbol{\nu},
\end{align*}
where $\nabla \boldsymbol{u}$ stands for the matrix $(\partial_{x_j}u_i)_{i,j=1}^3$, $\nabla \boldsymbol{u}^\top$ represents the transpose of $\nabla \boldsymbol{u}$, and $\boldsymbol{\nu}$ is the outward unit normal to the boundary $\partial D$. Furthermore, we note that the traces on $\partial D$ taken from the exterior and interior of $D$ are denoted by $+$ and $-$, respectively.

The constants $\rho$ and $\widetilde{\rho}$ represent the mass densities of the background medium in $\mathbb{R}^3\setminus\overline{D}$ and the medium in the region $D$, respectively. Let $c_s$, $c_p$, $\widetilde{c}_s$, and $\widetilde{c}_p$ denote the shear and compressional wave velocities in $\mathbb{R}^3\setminus\overline{D}$ and $D$, respectively, defined by
\begin{align}\label{velocity}
c_s=\sqrt{\mu/\rho}, \quad  c_p=\sqrt{(\lambda+2\mu)/\rho},\quad\widetilde{c}_s=\sqrt{\widetilde{\mu}/\widetilde{\rho}},\quad \widetilde{c}_p=\sqrt{(\widetilde{\lambda}+2\widetilde{\mu})/\widetilde{\rho}},
\end{align}
where $(\mu, \lambda)$ are the Lam\'{e} parameters in the background medium, and $(\widetilde{\mu}, \widetilde{\lambda})$ are the corresponding parameters in the region $D$. Let
\begin{align}\label{wave number}
k_s=\frac{\omega}{c_s},\quad k_p=\frac{\omega}{c_p}
\end{align}
represent the shear and compressional wavenumbers, respectively, in the background medium $\mathbb{R}^3\setminus\overline{D}$.

Let $\boldsymbol{u}^{\rm in}$ denote the incident field satisfying
\begin{align*}
\mathcal{L}_{\lambda,\mu}\boldsymbol{u}^{\text{in}}+\rho \omega^2\boldsymbol{u}^{\text{in}}=0 \quad\text{in } \mathbb{R}^3.
\end{align*}
The displacement of the total field $\boldsymbol u$ satisfies the time-harmonic elastic wave scattering problem, which is characterized by the Lam\'{e} system:
\begin{align}\label{Lame2}
\left\{ \begin{aligned}
&\mathcal{L}_{\lambda,\mu}\boldsymbol{u}+\rho\omega^2\boldsymbol{u}=0&&\text{in }\mathbb{R}^3\backslash\overline{D},\\
&\mathcal{L}_{\widetilde{\lambda},\widetilde{\mu}}\boldsymbol{u}+\widetilde{\rho}\omega^2\boldsymbol{u}=0&&\text{in } D,\\
&\boldsymbol{u}|_+=\boldsymbol{u}|_-&&\text{on } \partial D,\\
&\partial_{\boldsymbol{\nu}} \boldsymbol{u}\big|_+=\partial _{\boldsymbol{\nu}}\boldsymbol{u}\big|_-&&\text{on }\partial D.
\end{aligned}\right.
\end{align}
 The third and fourth equations in \eqref{Lame2} represent the transmission conditions. As described in
\cite{BLZ-JMPA20}, the displacement of the scattered field, $\boldsymbol{u}-\boldsymbol{u}^{\mathrm{in}}$, can be expressed as the sum of a compressional component $\boldsymbol u_p$ and a shear component $\boldsymbol u_s$, i.e.,
\[
 \boldsymbol{u}-\boldsymbol{u}^{\mathrm{in}}=\boldsymbol u_p+ \boldsymbol u_s\quad\text{in } \mathbb R^3\setminus\overline D,
\]
where $\boldsymbol u_p$ and $\boldsymbol u_s$ satisfy the Kupradze--Sommerfeld radiation condition:
\begin{equation}\label{src}
 \partial_r \boldsymbol u_p -{\rm i}k_p\boldsymbol u_p=\mathcal{O}(|\boldsymbol x|^{-2}),\quad \partial_r \boldsymbol u_s -{\rm i}k_s\boldsymbol u_s=\mathcal{O}(|\boldsymbol x|^{-2})
\end{equation}
as $|\boldsymbol x|\to\infty$, which hold uniformly in all directions $\widehat{\boldsymbol x}:=\boldsymbol x/|\boldsymbol x|\in\mathbb{S}^{2}$, where $\mathbb{S}^2$ is the unit sphere in $\mathbb{R}^3$. These conditions ensure that the scattered wave is outgoing and that the solution to the scattering problem \eqref{Lame2}--\eqref{src} is unique (cf. \cite[Theorem 3.7]{Li2019Inverse}).

Consider the contrast in the Lam\'{e} constants and mass densities between hard and soft elastic materials. Specifically, these contrasts are given by the relationships:
\begin{align}\label{ratio}
(\widetilde{\lambda},\widetilde{\mu})=\frac{1}{\delta}(\lambda,\mu), \quad\widetilde{\rho}=\frac{1}{\epsilon}\rho.
\end{align}
Additionally, we introduce a parameter $\tau$ to represent the contrast in shear and compressional wave velocities between different regions:
\begin{align}\label{contrast}
\tau=\frac{c_s}{\widetilde{c}_s}=\frac{c_p}{\widetilde{c}_p}=\sqrt{\frac{\delta}{\epsilon}},
\end{align}
where the last equality follows from \eqref{velocity} and \eqref{ratio}. Furthermore, we assume that
\begin{align*}
\tau=\mathcal{O}(1),
\end{align*}
which implies that the contrast in wave velocities is of order one.

Using \eqref{ratio} and \eqref{contrast}, the system in \eqref{Lame2} can be reformulated as follows:
\begin{align}\label{Lame}
\left\{ \begin{aligned}
&\mathcal{L}_{\lambda,\mu}\boldsymbol{u}+\rho\omega^2\boldsymbol{u}=0&&\text{in }\mathbb{R}^3\backslash\overline{D},\\
&\mathcal{L}_{\lambda,\mu}\boldsymbol{u}+\rho\tau^2\omega^2\boldsymbol{u}=0&&\text{in } D,\\
&\boldsymbol{u}|_+=\boldsymbol{u}|_-&&\text{on } \partial D,\\
&\delta\partial_{ \boldsymbol{\nu}} \boldsymbol{u}\big|_+=\partial_{\boldsymbol{\nu}}\boldsymbol{u}\big|_-&&\text{on }\partial D
\end{aligned}\right.
\end{align}
together with the Kupradze--Sommerfeld radiation condition \eqref{src}. In this work, we focus on the scattering problem in \eqref{Lame} and \eqref{src} under the subwavelength and high-contrast limits:
\begin{align*}
\omega\rightarrow 0,\quad\delta\rightarrow 0^+,\quad\epsilon\rightarrow0^+.
\end{align*}

\subsection{Layer potential operators}

Let $\mathbf{I}$ be the $3\times3$ the identity matrix. For $k\neq0$, the fundamental solution to the elastic wave operator $\mathcal{L}_{\lambda,\mu}+k^2$ is given by the Kupradze matrix $\boldsymbol\Gamma^{k}=(\Gamma^{k}_{ij})^{3}_{i,j=1}$, expressed as
\begin{align*}
\boldsymbol{\Gamma}^{k}(\boldsymbol x)
=-\frac{e^{\mathrm{i}\frac{k}{\sqrt{\mu}}|\boldsymbol x|}}{4\pi\mu|\boldsymbol x|}\mathbf{I}+\frac{1}{4\pi k^2}\nabla\nabla\left(\frac{e^{\mathrm{i}\frac{k}{\sqrt{\lambda+2\mu}}|\boldsymbol x|}
-e^{\mathrm{i}\frac{k}{\sqrt{\mu}}|\boldsymbol x|}}{|\boldsymbol x|}\right).
\end{align*}
In the specific case of $k=0$, the Kupradze matrix reduces the Kelvin matrix $\boldsymbol\Gamma^0=(\Gamma_{ij})^{3}_{i,j=1}$, which is written as
\begin{align*}%\label{SFundamental2}
\boldsymbol{\Gamma}^{0}(\boldsymbol x)=-\frac{1}{8\pi}\left(\frac{1}{\mu}+\frac{1}{\lambda+2\mu}\right)\frac{1}{|\boldsymbol x|}\mathbf{I}-\frac{1}{8\pi}\left(\frac{1}{\mu}-\frac{1}{\lambda+2\mu}\right)\frac{\boldsymbol x {\boldsymbol x}^{\top}}{|\boldsymbol x|^3}.
\end{align*}

For $k\in\mathbb{R}, \boldsymbol\varphi\in H^{-\frac{1}{2}}(\partial D)^3$, and $\boldsymbol x\in\mathbb{R}^3\backslash\partial D$, the single-layer potential corresponding to $\boldsymbol\Gamma^{k}$ is denoted by
\begin{align*}%\label{Single}
\boldsymbol{\widetilde{\mathcal{S}}}^{k}_{D}[\boldsymbol\varphi](\boldsymbol x)=\int_{\partial D}\boldsymbol\Gamma^{k}(\boldsymbol x-\boldsymbol y)\boldsymbol\varphi(\boldsymbol y)\mathrm{d}\sigma(\boldsymbol y).
\end{align*}
In addition, for $\boldsymbol x\in\partial{D}$, we define the single-layer potential and the Neumann--Poincar\'{e} operators as follows:
\begin{align*}
\boldsymbol{\mathcal{S}}^{k}_{D}[\boldsymbol\varphi](\boldsymbol x)&=\int_{\partial D}\boldsymbol\Gamma^{k}(\boldsymbol x-\boldsymbol y)\boldsymbol\varphi(\boldsymbol y)\mathrm{d}\sigma(\boldsymbol y),\\
\boldsymbol{\mathcal{K}}^{k,*}_{D}[\boldsymbol\varphi](\boldsymbol x)&=\mathrm{p.v.}\int_{\partial D} \partial_{\boldsymbol{\nu}_{\boldsymbol x}}\boldsymbol\Gamma^{k}(\boldsymbol x-\boldsymbol y)\boldsymbol\varphi(\boldsymbol y)\mathrm{d}\sigma(\boldsymbol y),
\end{align*}
where the notation $\mathrm{p.v.}$ indicates the Cauchy principal value of the integral. Furthermore, the following jump relations hold for the conormal derivative of the single-layer potential:
\begin{align}\label{Jump2}
\partial_{\boldsymbol{\nu}}\boldsymbol{\mathcal{S}}^{k}_{D}[\boldsymbol\varphi]\big|_{\pm}(\boldsymbol x)=\big(\pm\frac{1}{2}\boldsymbol{\mathcal{I}}+\boldsymbol{\mathcal{K}}^{k,*}_{ D}\big)[\boldsymbol\varphi](\boldsymbol x),\quad \boldsymbol x\in\partial{D}.
\end{align}

In the following,  the elastostatic operators $\boldsymbol{\widetilde{\mathcal{S}}}^{0}_{D}$, $\boldsymbol{\mathcal{S}}^{0}_{D}$, and $\boldsymbol{\mathcal{K}}^{0,*}_{D}$ are denoted by $\boldsymbol{\widetilde{\mathcal{S}}}_{D}$, $\boldsymbol{\mathcal{S}}_{D}$, and $\boldsymbol{\mathcal{K}}^{*}_{D}$, respectively, for simplicity of notation.

\begin{rema}
Given the smooth dependence of $\boldsymbol\Gamma^{k}$ on $k\in\mathbb{C}$, the boundary integral operators $\boldsymbol{\mathcal{S}}^{k}_{D}$ and $\boldsymbol{\mathcal{K}}^{k,*}_{D}$? can be analytically extended to the complex plane.
\end{rema}

The following lemma provides the series expansion of the fundamental solution $\boldsymbol{\Gamma}^{k}$ via a Taylor series expansion with respect to $k$ (cf. \cite[p.28]{Lay}).

\begin{lemm}\label{le:series}
As $k\rightarrow 0$, the fundamental solution $\boldsymbol{\Gamma}^{k}$ admits the following series expansion:
\begin{align*}
\boldsymbol{\Gamma}^{k}(\boldsymbol x)&=-\frac{1}{4\pi}\sum^{+\infty}_{j=0}\frac{\mathrm{i}^j}{(j+2)j!}\left(\frac{j+1}{\mu^{\frac{j+2}{2}}} +\frac{1}{(\lambda+2\mu)^{\frac{j+2}{2}}}\right)k^j|\boldsymbol x|^{j-1}\mathbf{I}\\
&\quad+\frac{1}{4\pi}\sum^{+\infty}_{j=0}\frac{\mathrm{i}^j(j-1)}{(j+2)j!}\left(\frac{1}{\mu^{\frac{j+2}{2}}}
-\frac{1}{(\lambda+2\mu)^{\frac{j+2}{2}}}\right)k^j|\boldsymbol x|^{j-3}\boldsymbol{x}\boldsymbol{x}^{\top}.
\end{align*}
\end{lemm}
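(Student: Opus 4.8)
The plan is to expand the two explicit scalar kernels appearing in the Kupradze matrix, namely $e^{\mathrm{i}\frac{k}{\sqrt{\mu}}|\boldsymbol x|}/|\boldsymbol x|$ and $e^{\mathrm{i}\frac{k}{\sqrt{\lambda+2\mu}}|\boldsymbol x|}/|\boldsymbol x|$, in powers of $k$ and then regroup. Writing $r=|\boldsymbol x|$, the first term of $\boldsymbol{\Gamma}^k$ is $-\frac{1}{4\pi\mu}\frac{e^{\mathrm{i}(k/\sqrt\mu)r}}{r}\mathbf I$, whose Taylor series in $k$ is $-\frac{1}{4\pi\mu}\sum_{j\geq 0}\frac{\mathrm{i}^j}{j!}\,\mu^{-j/2}k^j r^{j-1}\mathbf I = -\frac{1}{4\pi}\sum_{j\geq0}\frac{\mathrm{i}^j}{j!}\mu^{-(j+2)/2}k^j r^{j-1}\mathbf I$. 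The subtlety is the second term, $\frac{1}{4\pi k^2}\nabla\nabla\bigl(\frac{e^{\mathrm{i}(k/c_p')r}-e^{\mathrm{i}(k/c_s')r}}{r}\bigr)$ with $c_p'=\sqrt{\lambda+2\mu}$, $c_s'=\sqrt\mu$: one first expands the bracketed scalar function in $k$, notices the $k^0$ and $k^1$ terms cancel in the difference (so the $k^{-2}$ prefactor is harmless and the whole expression is a genuine power series starting at $k^0$), and then applies $\nabla\nabla$ to each monomial $r^{m}$.

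The key computational step is the identity $\nabla\nabla\, r^{m} = m(m-2)r^{m-4}\boldsymbol x\boldsymbol x^\top + m\, r^{m-2}\mathbf I$, valid for $\boldsymbol x\neq 0$, which follows from $\partial_{x_i} r = x_i/r$ and a direct second differentiation. Applying this with the shifted exponent coming from the scalar expansion: the difference $\frac{e^{\mathrm{i}(k/c_p')r}-e^{\mathrm{i}(k/c_s')r}}{r}$ equals $\sum_{j\geq0}\frac{\mathrm{i}^j}{j!}\bigl((\lambda+2\mu)^{-j/2}-\mu^{-j/2}\bigr)k^j r^{j-1}$, and the $j=0,1$ coefficients vanish, so after multiplying by $k^{-2}$ we get $\sum_{j\geq 0}\frac{\mathrm{i}^{j+2}}{(j+2)!}\bigl((\lambda+2\mu)^{-(j+2)/2}-\mu^{-(j+2)/2}\bigr)k^{j}r^{j+1}$. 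Now apply $\frac{1}{4\pi}\nabla\nabla$ termwise, using the identity above with $m=j+1$: this produces one piece proportional to $r^{j-3}\boldsymbol x\boldsymbol x^\top$ with coefficient $(j+1)(j-1)$ and one piece proportional to $r^{j-1}\mathbf I$ with coefficient $(j+1)$. Collecting the $\mathbf I$ contributions from both terms of $\boldsymbol\Gamma^k$ gives the combined coefficient $-\frac{1}{4\pi}\frac{\mathrm{i}^j}{(j+2)j!}\bigl(\frac{j+1}{\mu^{(j+2)/2}}+\frac{1}{(\lambda+2\mu)^{(j+2)/2}}\bigr)$ after simplifying $\frac{\mathrm{i}^{j+2}(j+1)}{(j+2)!} = -\frac{\mathrm{i}^j(j+1)}{(j+2)j!}$, matching the first displayed sum; the $\boldsymbol x\boldsymbol x^\top$ contributions yield $\frac{1}{4\pi}\frac{\mathrm{i}^j(j-1)}{(j+2)j!}\bigl(\frac{1}{\mu^{(j+2)/2}}-\frac{1}{(\lambda+2\mu)^{(j+2)/2}}\bigr)$ after using $\frac{(j+1)(j-1)}{(j+2)!}=\frac{j-1}{(j+2)j!}$, matching the second sum.

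The main obstacle, such as it is, is purely bookkeeping: tracking the four factors $\mathrm{i}$-power, factorial, the shifted exponent on $r$, and the $\mu$ versus $\lambda+2\mu$ weights through the differentiation, and in particular verifying that the would-be singular $k^{-2}$ does not spoil analyticity (the cancellation of the $j=0,1$ terms in the difference of exponentials) and that the $j=0$ term of the final series reproduces the Kelvin matrix $\boldsymbol\Gamma^0$ as a consistency check. One should also note that for $j=1$ the $\boldsymbol x\boldsymbol x^\top$ coefficient vanishes (because of the factor $j-1$), which is consistent with the fact that $r^{-2}\boldsymbol x\boldsymbol x^\top$ scaling is absent at that order. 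I would organize the proof as: (i) record the scalar Taylor expansions and the cancellation of low-order terms; (ii) state and prove the differentiation identity for $\nabla\nabla r^m$; (iii) substitute, apply the identity termwise, and simplify the prefactors using elementary factorial and power-of-$\mathrm{i}$ manipulations to arrive at the claimed two sums. Convergence of the series for $k$ in a neighbourhood of $0$ (uniformly on compact subsets of $\mathbb{R}^3\setminus\{0\}$) follows from the entirety of the exponential, which also justifies the termwise differentiation.
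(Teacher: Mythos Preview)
Your approach is correct and is exactly the Taylor expansion in $k$ that the paper invokes (the paper gives no proof, merely citing \cite[p.~28]{Lay}); the differentiation identity $\nabla\nabla r^{m}=m(m-2)r^{m-4}\boldsymbol x\boldsymbol x^{\top}+m\,r^{m-2}\mathbf I$ and the factorial bookkeeping are right, and the two sums fall out as you describe.

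One small slip in the narrative: the $k^{1}$ term in the scalar difference $(e^{\mathrm{i}(k/c_p')r}-e^{\mathrm{i}(k/c_s')r})/r$ does \emph{not} cancel---its coefficient is $\mathrm{i}\bigl((\lambda+2\mu)^{-1/2}-\mu^{-1/2}\bigr)$, which is generically nonzero. What actually happens is that this $k^{1}$ term is independent of $\boldsymbol x$ (it equals that constant times $r^{0}$), so it is annihilated by $\nabla\nabla$; hence after applying $\nabla\nabla$ and dividing by $k^{2}$ the result is indeed a genuine power series in $k$. Your subsequent formula $\sum_{j\geq0}\frac{\mathrm{i}^{j+2}}{(j+2)!}(\cdots)k^{j}r^{j+1}$ is therefore correct \emph{modulo a spatial constant}, which is harmless for the rest of the computation. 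Just rephrase that sentence accordingly.
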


Based on Lemma \ref{le:series}, we can derive the asymptotic expansions for the single-layer potential and the  Neumann--Poincar\'{e} operators (cf. \cite{chang2007spectral,BochaoE,Irina1999spectral}).

\begin{lemm}
The single-layer potential operator $\boldsymbol{\mathcal{S}}^{k}_{D}:H^{-\frac{1}{2}}(\partial D)^3\rightarrow H^{\frac{1}{2}}(\partial D)^3$ admits the following asymptotic expansion as $k\rightarrow 0$:
\begin{align*}
\boldsymbol{\mathcal{S}}^{k}_{D}=\boldsymbol{\mathcal{S}}_{D}+k \boldsymbol{\mathcal{S}}_{D,1}+\mathcal{O}(k^2),
\end{align*}
where, for all $\boldsymbol\varphi\in H^{-\frac{1}{2}}(\partial D)^3$,
\begin{align}\label{ri}
\boldsymbol{\mathcal{S}}_{D,1}[\boldsymbol\varphi](\boldsymbol x)&=-\mathrm{i}\gamma\int_{\partial D}\boldsymbol\varphi(\boldsymbol y)\mathrm{d}\sigma(\boldsymbol y), \quad \gamma=\frac{1}{12\pi}\left(\frac{2}{\mu^{\frac{3}{2}}}+\frac{1}{(\lambda+2\mu)^{\frac{3}{2}}}\right).
\end{align}
\end{lemm}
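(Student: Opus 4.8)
The plan is to substitute the Taylor series expansion of $\boldsymbol{\Gamma}^{k}$ from Lemma \ref{le:series} directly into the definition of $\boldsymbol{\mathcal{S}}^{k}_{D}$ and collect the terms of order $k^0$ and $k^1$. Writing
\[
\boldsymbol{\mathcal{S}}^{k}_{D}[\boldsymbol\varphi](\boldsymbol x)=\int_{\partial D}\boldsymbol{\Gamma}^{k}(\boldsymbol x-\boldsymbol y)\boldsymbol\varphi(\boldsymbol y)\,\mathrm{d}\sigma(\boldsymbol y),
\]
the $j=0$ term of the series reproduces exactly the Kelvin matrix $\boldsymbol{\Gamma}^{0}$, hence yields $\boldsymbol{\mathcal{S}}_{D}$; this is the leading term. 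For the $k^1$ coefficient I would extract the $j=1$ terms from both sums in Lemma \ref{le:series}. In the first sum the $j=1$ term contributes
\[
-\frac{1}{4\pi}\cdot\frac{\mathrm{i}}{3}\left(\frac{2}{\mu^{3/2}}+\frac{1}{(\lambda+2\mu)^{3/2}}\right)|\boldsymbol x|^{0}\mathbf{I}
=-\mathrm{i}\gamma\,\mathbf{I},
\]
with $\gamma$ precisely as in \eqref{ri}, since $|\boldsymbol x|^{j-1}=|\boldsymbol x-\boldsymbol y|^{0}=1$ when $j=1$. In the second sum the $j=1$ term carries the factor $(j-1)=0$ and therefore vanishes identically. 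Thus the $k^1$ coefficient of $\boldsymbol{\Gamma}^{k}(\boldsymbol x-\boldsymbol y)$ is the constant matrix $-\mathrm{i}\gamma\,\mathbf{I}$, independent of $\boldsymbol x$ and $\boldsymbol y$, and integrating against $\boldsymbol\varphi$ gives
\[
\boldsymbol{\mathcal{S}}_{D,1}[\boldsymbol\varphi](\boldsymbol x)=-\mathrm{i}\gamma\int_{\partial D}\boldsymbol\varphi(\boldsymbol y)\,\mathrm{d}\sigma(\boldsymbol y),
\]
as claimed.

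To justify treating the remainder as $\mathcal{O}(k^2)$ in the operator norm on $H^{-1/2}(\partial D)^3\to H^{1/2}(\partial D)^3$, I would note that the tail $\sum_{j\ge 2}$ of the series in Lemma \ref{le:series} converges uniformly for $\boldsymbol x-\boldsymbol y$ in a bounded set (here $|\boldsymbol x-\boldsymbol y|\le \operatorname{diam}(\partial D)$) and defines, for each fixed $k$, a kernel of the same pseudodifferential order as $\boldsymbol{\Gamma}^0$ (the singularities $|\boldsymbol x-\boldsymbol y|^{j-1}$ and $|\boldsymbol x-\boldsymbol y|^{j-3}$ for $j\ge 2$ are no worse than that of the Kelvin matrix). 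Hence term-by-term integration is legitimate and the induced boundary operators depend analytically on $k$ near $k=0$, with the $k^2$-remainder bounded in the stated operator norm. This is the standard mapping property of the single-layer operator combined with analyticity in the wavenumber; I would cite \cite{chang2007spectral,BochaoE} for the precise continuity estimates rather than reprove them.

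The only mildly delicate point — and the one I would be most careful about — is bookkeeping the powers of $k$ versus the powers of $|\boldsymbol x-\boldsymbol y|$ so that the vanishing of the $j=1$ term in the second (dyadic) sum is correctly identified: it is the coincidence $(j-1)|_{j=1}=0$, not any cancellation between the two sums, that kills the $\boldsymbol{x}\boldsymbol{x}^{\top}$ contribution at first order, and it is the coincidence $|\boldsymbol x-\boldsymbol y|^{j-1}|_{j=1}=1$ that makes the surviving coefficient a genuine constant (so that $\boldsymbol{\mathcal{S}}_{D,1}$ is rank one, mapping onto the constants). Everything else is a direct reading-off of coefficients from Lemma \ref{le:series}.
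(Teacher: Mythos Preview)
Your proposal is correct and is exactly the derivation the paper has in mind: the lemma is stated without proof, merely prefaced by ``Based on Lemma~\ref{le:series}, we can derive the asymptotic expansions \ldots'' with citations, and your argument---substituting the series for $\boldsymbol{\Gamma}^{k}$, reading off the $j=0$ and $j=1$ coefficients, and noting that the dyadic term vanishes at $j=1$ because of the factor $(j-1)$---is precisely that derivation. One small slip in your commentary: $\boldsymbol{\mathcal{S}}_{D,1}$ has rank three (its range is the constant $\mathbb{C}^3$-valued fields), not rank one, but this does not affect the proof.
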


\begin{lemm}%\label{K:asy}
The Neumann--Poincar\'{e} operator $\boldsymbol{\mathcal{K}}^{k,*}_{D}:H^{-\frac{1}{2}}(\partial D)^3\rightarrow H^{-\frac{1}{2}}(\partial D)^3$ has the asymptotic expansion as $k\rightarrow 0$:
\begin{align}\label{EK-series}
\boldsymbol{\mathcal{K}}^{k,*}_{D}=\boldsymbol{\mathcal{K}}^*_{D}+\mathcal{O}(k^2).
\end{align}
\end{lemm}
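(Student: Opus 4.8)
The plan is to obtain the expansion of $\boldsymbol{\mathcal{K}}^{k,*}_{D}$ by differentiating the kernel $\partial_{\boldsymbol{\nu}_{\boldsymbol x}}\boldsymbol{\Gamma}^k(\boldsymbol x-\boldsymbol y)$ term by term, using the series from Lemma \ref{le:series}. Writing $\boldsymbol{\Gamma}^k(\boldsymbol z)=\sum_{j\geq 0}k^j\boldsymbol{G}_j(\boldsymbol z)$ with
\begin{align*}
\boldsymbol{G}_j(\boldsymbol z)=-\frac{\mathrm{i}^j}{4\pi(j+2)j!}\Big(\tfrac{j+1}{\mu^{(j+2)/2}}+\tfrac{1}{(\lambda+2\mu)^{(j+2)/2}}\Big)|\boldsymbol z|^{j-1}\mathbf I
+\frac{\mathrm{i}^j(j-1)}{4\pi(j+2)j!}\Big(\tfrac{1}{\mu^{(j+2)/2}}-\tfrac{1}{(\lambda+2\mu)^{(j+2)/2}}\Big)|\boldsymbol z|^{j-3}\boldsymbol z\boldsymbol z^{\top},
\end{align*}
I would apply the conormal derivative $\partial_{\boldsymbol\nu_{\boldsymbol x}}$ to each $\boldsymbol{G}_j(\boldsymbol x-\boldsymbol y)$ and integrate against $\boldsymbol\varphi$, giving $\boldsymbol{\mathcal{K}}^{k,*}_{D}=\sum_{j\geq 0}k^j\boldsymbol{\mathcal{K}}^{(j)}_D$ with $\boldsymbol{\mathcal{K}}^{(0)}_D=\boldsymbol{\mathcal{K}}^*_D$. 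The claim \eqref{EK-series} then reduces to showing that the $j=1$ term vanishes, i.e. $\boldsymbol{\mathcal{K}}^{(1)}_D=0$, and that the tail $\sum_{j\geq 2}k^j\boldsymbol{\mathcal{K}}^{(j)}_D$ is $\mathcal{O}(k^2)$ in the operator norm on $H^{-1/2}(\partial D)^3$.

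The key computation is the $j=1$ term. When $j=1$, the coefficient $(j-1)=0$ kills the $\boldsymbol z\boldsymbol z^{\top}$ part entirely, so $\boldsymbol{G}_1(\boldsymbol z)=-\frac{\mathrm{i}}{12\pi}\big(\tfrac{2}{\mu^{3/2}}+\tfrac{1}{(\lambda+2\mu)^{3/2}}\big)\mathbf I=-\mathrm{i}\gamma\,\mathbf I$ is a \emph{constant} matrix (this is exactly the source of $\boldsymbol{\mathcal{S}}_{D,1}$ in \eqref{ri}). Since the conormal derivative $\partial_{\boldsymbol\nu}$ involves only first-order spatial derivatives of its argument — explicitly $\partial_{\boldsymbol\nu}\boldsymbol w=\lambda(\nabla\cdot\boldsymbol w)\boldsymbol\nu+\mu(\nabla\boldsymbol w+\nabla\boldsymbol w^{\top})\boldsymbol\nu$ — applying it to the constant-in-$\boldsymbol x$ matrix $\boldsymbol{G}_1(\boldsymbol x-\boldsymbol y)$ yields zero. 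Hence $\partial_{\boldsymbol\nu_{\boldsymbol x}}\boldsymbol{G}_1(\boldsymbol x-\boldsymbol y)\equiv 0$ and $\boldsymbol{\mathcal{K}}^{(1)}_D=0$, which gives the absence of a linear-in-$k$ term. I would double-check the $j=0$ term reproduces the elastostatic $\boldsymbol{\mathcal{K}}^*_D$ (here $|\boldsymbol z|^{-1}\mathbf I$ and $|\boldsymbol z|^{-3}\boldsymbol z\boldsymbol z^{\top}$ match the Kelvin matrix up to the prefactors), which is immediate.

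For the remainder estimate, I would note that each $\boldsymbol{G}_j$ for $j\geq 2$ has kernel entries that are at worst of order $|\boldsymbol z|^{j-3}$ near the diagonal; after taking $\partial_{\boldsymbol\nu_{\boldsymbol x}}$ one loses one power, producing kernels behaving like $|\boldsymbol z|^{j-4}$, which for $j\geq 2$ on a surface in $\mathbb R^3$ defines an operator bounded on $H^{-1/2}(\partial D)^3$ (the genuinely singular borderline case $j=2$ giving a weakly singular, in fact smoothing, contribution since the would-be $|\boldsymbol z|^{-2}$ term again carries the vanishing factor $(j-1)=1$ only on the $\boldsymbol z\boldsymbol z^{\top}$ part while the $\mathbf I$ part is $|\boldsymbol z|$, so no new strong singularity appears). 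Uniform convergence of the series in $k$ on a neighborhood of $0$ — which follows from the factorial decay $1/((j+2)j!)$ in the coefficients together with the compactness of $\partial D$, exactly as the convergence of the series in Lemma \ref{le:series} is justified in \cite[p.28]{Lay} — lets me sum the tail and bound it by $C k^2$. The main obstacle is this last mapping-property bookkeeping: one must verify carefully that differentiating the fundamental-solution series term by term is legitimate in the operator topology and that each differentiated term defines a bounded operator with norm controlled uniformly in $j$, so that the $\mathcal{O}(k^2)$ is genuine; the algebraic heart of the statement, the vanishing of the $k^1$ coefficient, is by contrast an easy consequence of the constancy of $\boldsymbol{G}_1$.
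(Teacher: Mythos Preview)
The paper does not actually prove this lemma; it merely states it and refers the reader to \cite{chang2007spectral,BochaoE,Irina1999spectral} for the derivation. Your approach---expanding $\boldsymbol{\Gamma}^k$ via Lemma~\ref{le:series}, observing that $\boldsymbol{G}_1$ is a constant matrix (since the factor $(j-1)$ annihilates the $\boldsymbol{z}\boldsymbol{z}^\top$ part at $j=1$), and concluding that $\partial_{\boldsymbol\nu_{\boldsymbol x}}\boldsymbol{G}_1\equiv 0$---is correct and is precisely the standard argument one finds in those references. One minor clarification: in your remark on the $j=2$ term, note that $|\boldsymbol z|^{-1}\boldsymbol z\boldsymbol z^{\top}$ has entries $z_iz_j/|\boldsymbol z|=\mathcal{O}(|\boldsymbol z|)$, so both parts of $\boldsymbol{G}_2$ are $\mathcal{O}(|\boldsymbol z|)$ and after applying $\partial_{\boldsymbol\nu_{\boldsymbol x}}$ one obtains a bounded kernel, which is comfortably weakly singular on the two-dimensional surface $\partial D$; the tail estimate then goes through as you describe.
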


\begin{lemm}%\label{le:IS-series}
As $k\rightarrow0$, the boundary integral operator $\boldsymbol{\mathcal{S}}^{k}_{D}$ is invertible, and its inverse is denoted by $(\boldsymbol{\mathcal{S}}^{k}_{D})^{-1}$. The operator $(\boldsymbol{\mathcal{S}}^{k}_{D})^{-1}: H^{\frac{1}{2}}(\partial D)^3\rightarrow H^{-\frac{1}{2}}(\partial D)^3$ has the  following  asymptotic expansion as $k\rightarrow 0$:
\begin{align}\label{EIS:series}
&(\boldsymbol{\mathcal{S}}^{k}_{D})^{-1}=\boldsymbol{\mathcal{S}}^{-1}_{D}-k\boldsymbol{\mathcal{S}}_{D}^{-1}\boldsymbol{\mathcal{S}}_{D,1}\boldsymbol{\mathcal{S}}_{D}^{-1}+\mathcal{O}(k^2).
\end{align}
\end{lemm}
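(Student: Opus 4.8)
The plan is to derive \eqref{EIS:series} by a Neumann series argument built on the factorization of $\boldsymbol{\mathcal{S}}^{k}_{D}$ together with the expansion $\boldsymbol{\mathcal{S}}^{k}_{D}=\boldsymbol{\mathcal{S}}_{D}+k\boldsymbol{\mathcal{S}}_{D,1}+\mathcal{O}(k^2)$ just established. Two ingredients are needed. First, the elastostatic single-layer operator $\boldsymbol{\mathcal{S}}_{D}\colon H^{-\frac12}(\partial D)^3\to H^{\frac12}(\partial D)^3$ is invertible; in three dimensions this is classical under the strong convexity condition \eqref{convexity} on $(\lambda,\mu)$, via the mapping properties of the Kelvin matrix $\boldsymbol{\Gamma}^{0}$. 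Second, the remainder in the expansion of $\boldsymbol{\mathcal{S}}^{k}_{D}$ is $\mathcal{O}(k^2)$ in the operator norm on $\mathcal{L}\bigl(H^{-\frac12}(\partial D)^3,H^{\frac12}(\partial D)^3\bigr)$, uniformly for $k$ near $0$. The cleanest route to the second point is the remark preceding Lemma \ref{le:series}: since $\boldsymbol{\Gamma}^{k}$ depends holomorphically on $k$ through the convergent series of Lemma \ref{le:series}, the map $k\mapsto\boldsymbol{\mathcal{S}}^{k}_{D}$ is an operator-valued holomorphic function near $k=0$, so its Taylor remainder past order one is automatically $\mathcal{O}(k^2)$ in operator norm; alternatively one estimates the tail of the series directly, using that each kernel appearing in Lemma \ref{le:series} is at worst weakly singular, so the associated operator is bounded $H^{-\frac12}(\partial D)^3\to H^{\frac12}(\partial D)^3$ with a bound growing at most geometrically in $j$.

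Granting these, I would write
\begin{align*}
\boldsymbol{\mathcal{S}}^{k}_{D}=\boldsymbol{\mathcal{S}}_{D}\bigl(\boldsymbol{\mathcal{I}}+k\,\boldsymbol{\mathcal{S}}_{D}^{-1}\boldsymbol{\mathcal{S}}_{D,1}+\boldsymbol{\mathcal{S}}_{D}^{-1}\boldsymbol{\mathcal{R}}(k)\bigr),\qquad \boldsymbol{\mathcal{R}}(k):=\boldsymbol{\mathcal{S}}^{k}_{D}-\boldsymbol{\mathcal{S}}_{D}-k\boldsymbol{\mathcal{S}}_{D,1}=\mathcal{O}(k^2),
\end{align*}
where $\boldsymbol{\mathcal{S}}_{D}^{-1}\boldsymbol{\mathcal{S}}_{D,1}$ and $\boldsymbol{\mathcal{S}}_{D}^{-1}\boldsymbol{\mathcal{R}}(k)$ are bounded on $H^{-\frac12}(\partial D)^3$ (note from \eqref{ri} that $\boldsymbol{\mathcal{S}}_{D,1}$ has rank at most three, its range consisting of constant vectors, hence lying in $H^{\frac12}(\partial D)^3$). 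For $|k|$ sufficiently small the bracketed operator is invertible on $H^{-\frac12}(\partial D)^3$ by the Neumann series, with inverse $\boldsymbol{\mathcal{I}}-k\boldsymbol{\mathcal{S}}_{D}^{-1}\boldsymbol{\mathcal{S}}_{D,1}+\mathcal{O}(k^2)$; in particular $\boldsymbol{\mathcal{S}}^{k}_{D}$ itself is invertible for such $k$. Composing on the right with $\boldsymbol{\mathcal{S}}_{D}^{-1}$ then yields
\begin{align*}
(\boldsymbol{\mathcal{S}}^{k}_{D})^{-1}=\bigl(\boldsymbol{\mathcal{I}}-k\boldsymbol{\mathcal{S}}_{D}^{-1}\boldsymbol{\mathcal{S}}_{D,1}+\mathcal{O}(k^2)\bigr)\boldsymbol{\mathcal{S}}_{D}^{-1}=\boldsymbol{\mathcal{S}}^{-1}_{D}-k\boldsymbol{\mathcal{S}}_{D}^{-1}\boldsymbol{\mathcal{S}}_{D,1}\boldsymbol{\mathcal{S}}_{D}^{-1}+\mathcal{O}(k^2),
\end{align*}
which is the claimed expansion \eqref{EIS:series}.

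The algebra of the Neumann series and of the composition is routine; the only substantive points are the invertibility of $\boldsymbol{\mathcal{S}}_{D}$ in $\mathbb{R}^3$ (which fails in two dimensions and must be quoted with care) and the promotion of the kernel-level expansion of Lemma \ref{le:series} to an operator-norm expansion of $\boldsymbol{\mathcal{S}}^{k}_{D}$ with a quantitative $\mathcal{O}(k^2)$ remainder between the trace spaces $H^{\mp\frac12}(\partial D)^3$. Once the holomorphy statement of the remark is available, even this reduces to a formality, since the coefficients of the inverse family can equally be computed by differentiating the identity $\boldsymbol{\mathcal{S}}^{k}_{D}(\boldsymbol{\mathcal{S}}^{k}_{D})^{-1}=\boldsymbol{\mathcal{I}}$ at $k=0$.
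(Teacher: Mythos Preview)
Your argument is correct and is exactly the standard Neumann series computation one expects here. The paper itself does not supply a proof of this lemma; it states the result together with the two companion expansions for $\boldsymbol{\mathcal{S}}^{k}_{D}$ and $\boldsymbol{\mathcal{K}}^{k,*}_{D}$ and refers to \cite{chang2007spectral,BochaoE,Irina1999spectral} for the derivations, so there is nothing to compare against beyond noting that your factorization-plus-Neumann-series route is precisely the argument those references use.
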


\subsection{Dirichlet-to-Neumann map}

The purpose of this subsection is to provide the definition and asymptotic expansion of the DtN operator, which maps Dirichlet values on the boundary to Neumann values on the boundary (cf. \cite{DtN:Gunter,DtN:Stephanie}). In particular, for spherical or circular regions, the corresponding DtN map is discussed in \cite{Bao2021adaptive,DtN:Li}.

\begin{defi}\label{DtN:def}
For $k\in \mathbb{R}$, we define the DtN map characterized by the operator $\boldsymbol{\mathcal{M}}^{k}:H^{\frac{1}{2}}(\partial D)^{3}\rightarrow H^{-\frac{1}{2}}(\partial D)^{3}$ as follows:
\begin{align*}
\boldsymbol{\mathcal{M}}^{k}[\boldsymbol{f}]:=\partial_{\boldsymbol{\nu}}\boldsymbol{h}_{\boldsymbol{f}}|_+,
\end{align*}
where $\boldsymbol{h}_{\boldsymbol{f}}$ satisfies the system:
\begin{align*}
\left\{ \begin{aligned}
&\mathcal{L}_{\lambda,\mu}\boldsymbol{h}_{\boldsymbol{f}}+k^{2}\boldsymbol{h}_{\boldsymbol{f}}=0&&\text{in }\mathbb{R}^3\backslash\overline{D},\\
&\boldsymbol{h}_{\boldsymbol{f}}=\boldsymbol{f}&&\text{on }\partial D,\\
&(\nabla\times\nabla\times\boldsymbol{h}_{\boldsymbol{f}})(\boldsymbol x)\times\widehat{\boldsymbol x}-\mathrm{i}k\mu^{-\frac{1}{2}}\nabla\times\boldsymbol{h}_{\boldsymbol{f}}(\boldsymbol x)=\mathcal{O}(|\boldsymbol x|^{-2})&& \text{as }|\boldsymbol x|\rightarrow+\infty,\\
&\widehat{\boldsymbol x}\cdot (\nabla(\nabla\cdot\boldsymbol{h}_{\boldsymbol{f}}))(\boldsymbol x)-\mathrm{i}k(\lambda+2\mu)^{-\frac{1}{2}}\nabla\cdot\boldsymbol{h}_{\boldsymbol{f}}(\boldsymbol x)=\mathcal{O}(|\boldsymbol x|^{-2})&&\text{as }|\boldsymbol x|\rightarrow+\infty.
\end{aligned}\right.
\end{align*}
Here, the last two equations represent the Kupradze--Sommerfeld radiation condition. In particular, we denote $\boldsymbol{\mathcal{M}}^0$ simply as $\boldsymbol{\mathcal{M}}$.
\end{defi}

In the following, we set $k=\sqrt{\rho}\omega$ with $\omega\rightarrow 0$.  Consider the Neumann boundary value problem:
\begin{align}\label{Lame:traction 0}
\left\{ \begin{aligned}
&\mathcal{L}_{\lambda,\mu}\boldsymbol{u}=0&&\text{in } D,\\
&\partial_{\boldsymbol{\nu} }\boldsymbol{u}=0&&\text{on } \partial D.
\end{aligned}\right.
\end{align}
This system admits six linearly independent solutions (cf. \cite{ando2018spectral}), which are
\begin{align*}\left[\begin{array}{l}
1 \\
0 \\
0
\end{array}\right],\left[\begin{array}{l}
0 \\
1 \\
0
\end{array}\right],\left[\begin{array}{l}
0 \\
0 \\
1
\end{array}\right],\left[\begin{array}{c}
x_2 \\
-x_1 \\
0
\end{array}\right],\left[\begin{array}{c}
x_3 \\
0 \\
-x_1
\end{array}\right],\left[\begin{array}{c}
0 \\
x_3 \\
-x_2
\end{array}\right],
\end{align*}
where, for $i=1,2,3$, $x_i$ is the $i$-th component of $\boldsymbol{x}\in D$. Denote by $\Xi$ the vector space consisting of all linear solutions to \eqref{Lame:traction 0}. Then the space $\Xi$ can be represented as (cf. \cite[p.14]{Ammari:book2015}):
\begin{align*}%\label{space:U}
\Xi=\left\{\boldsymbol{u}:\mathcal{E}_{ij}(\boldsymbol{u})=0,i,j=1,2,3\right\},
\end{align*}
where the strain tensor $\mathcal{E}_{ij}(\boldsymbol{u})$ is defined as
\begin{align}\label{E:Eij}
\mathcal{E}_{ij}(\boldsymbol{u}):=\frac{1}{2}(\partial_{x_i}u_j+\partial_{x_j}u_i).
\end{align}

Using the Gram--Schmidt orthogonalization method, we can obtain an  orthonormal basis $\{\boldsymbol\xi_i\}_{1\leq i\leq6}$ for the space $\Xi$ in the $L^{2}(D)^3$ sense, which is given by
\begin{align*}&\boldsymbol\xi_1:=\frac{1}{|D|}\left[\begin{array}{l}
1 \\
0 \\
0
\end{array}\right]=\frac{1}{|D|}\mathbf{e}_1,\quad\boldsymbol\xi_2:=\frac{1}{|D|}\left[\begin{array}{l}
0 \\
1 \\
0
\end{array}\right]=\frac{1}{|D|}\mathbf{e}_2,\quad\boldsymbol\xi_3:=\frac{1}{|D|}\left[\begin{array}{l}
0 \\
0 \\
1
\end{array}\right]=\frac{1}{|D|}\mathbf{e}_3,\\
&\boldsymbol\xi_4:=d_4\left[\begin{array}{c}
x_2-c_2 \\
-x_1+c_1 \\
0
\end{array}\right],\quad\boldsymbol\xi_5:=d_5\left[\begin{array}{c}
x_3-c_3-L_1(x_2-c_2)\\
L_1(x_1-c_1) \\
-x_1+c_1
\end{array}\right],\\
&\boldsymbol\xi_6:=d_6\left[\begin{array}{c}
-L_2(x_2-c_2)-L_3(x_3-c_3)+L_3L_1(x_2-c_2)\\
x_3-c_3+L_2(x_1-c_1)+L_3L_1(c_1-x_1) \\
-x_2+c_2+L_3(x_1-c_1)
\end{array}\right].
\end{align*}
Here, the constants $d_i$ are chosen such that $(\boldsymbol\xi_i,\boldsymbol\xi_i)_{L^2(D)^3}=1,i=4,5,6.$ The vectors $\{\mathbf{e}_n\}_{n=1,2,3}$ form the standard orthonormal basis of $\mathbb{R}^3$. The constants $c_1, c_2, c_3$? represent the centroid of the domain $D$ and are defined as:
\begin{align*}
c_1=\frac{\int_Dx_1\mathrm{d}\boldsymbol{x}}{|D|},\quad c_2=\frac{\int_Dx_2\mathrm{d}\boldsymbol{x}}{|D|},\quad c_3=\frac{\int_Dx_3\mathrm{d}\boldsymbol{x}}{|D|}.
\end{align*}
The coefficients $L_1, L_2, L_3$ are given by
\begin{align*}
 L_1&=\frac{\int_Dx_2x_3\mathrm{d}\boldsymbol{x}-|D|c_2c_3}{\int_D((x_2-c_2)^2+(x_1-c_1)^2)\mathrm{d}\boldsymbol{x}},\quad L_2=\frac{-\int_Dx_1x_3\mathrm{d}\boldsymbol{x}-|D|c_1c_3}{\int_D((x_2-c_2)^2+(x_1-c_1)^2)\mathrm{d}\boldsymbol{x}},\\ L_3&=\frac{\int_D(-L_1x_3(x_2-c_2)-x_2(-x_1+c_1))\mathrm{d}\boldsymbol{x}}{\int_D((x_3-c_3-L_1x_2+L_1c_2)^2+(L_1x_1-L_1c_1)^2+(c_1-x_1)^2)\mathrm{d}\boldsymbol{x}}.
\end{align*}

\begin{lemm}\label{le:xi}
Assume that for $\boldsymbol{\varphi}\in H^{-\frac{1}{2}}(\partial D)^3$, the equation $\boldsymbol{\mathcal{S}}_{D}[\boldsymbol{\varphi}]=\boldsymbol\xi_i $ holds on $\partial D$. Then, for $i=1,\cdots, 6$, the following equation is satisfied:
\begin{align}\label{eq:xi}
\big(-\frac{1}{2}\boldsymbol{\mathcal{I}}+\boldsymbol{\mathcal{K}}^{*}_{ D}\big)\boldsymbol{\mathcal{S}}^{-1}_{D}[\boldsymbol\xi_i]=0\quad \text{on } \partial D.
\end{align}
\end{lemm}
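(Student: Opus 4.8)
The idea is that each basis field $\boldsymbol\xi_i$ is simultaneously a solution of the interior elastostatic Dirichlet problem and of the interior traction-free (Neumann) problem, so the single-layer density producing the Dirichlet datum $\boldsymbol\xi_i$ must carry no interior conormal derivative, which is exactly what \eqref{eq:xi} says.

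First I would rewrite the hypothesis: since the elastostatic single-layer operator $\boldsymbol{\mathcal S}_D=\boldsymbol{\mathcal S}^0_D$ is invertible from $H^{-\frac12}(\partial D)^3$ onto $H^{\frac12}(\partial D)^3$ (the $k=0$ instance of the invertibility recorded above, which is also what makes $\boldsymbol{\mathcal S}_D^{-1}$ meaningful in \eqref{EIS:series}), the equation $\boldsymbol{\mathcal S}_D[\boldsymbol\varphi]=\boldsymbol\xi_i$ forces $\boldsymbol\varphi=\boldsymbol{\mathcal S}_D^{-1}[\boldsymbol\xi_i]$, so it suffices to show $(-\tfrac12\boldsymbol{\mathcal I}+\boldsymbol{\mathcal K}^*_D)[\boldsymbol\varphi]=0$ on $\partial D$. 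Then I would introduce $\boldsymbol h:=\boldsymbol{\widetilde{\mathcal S}}_D[\boldsymbol\varphi]$ on $\mathbb R^3$. Because $\boldsymbol\Gamma^0$ is the fundamental solution of $\mathcal L_{\lambda,\mu}$, one has $\mathcal L_{\lambda,\mu}\boldsymbol h=0$ in $D$, and the continuity of the single-layer potential across $\partial D$ gives the interior trace $\boldsymbol h|_-=\boldsymbol{\mathcal S}_D[\boldsymbol\varphi]=\boldsymbol\xi_i$ on $\partial D$.

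Next I would compare $\boldsymbol h$ with $\boldsymbol\xi_i$ itself. Every element of $\Xi$ has vanishing strain, $\mathcal E_{pq}(\boldsymbol\xi_i)\equiv0$, so $\boldsymbol\xi_i$ also solves $\mathcal L_{\lambda,\mu}\boldsymbol\xi_i=0$ in $D$ with the same Dirichlet datum on $\partial D$. The difference $\boldsymbol w:=\boldsymbol h-\boldsymbol\xi_i\in H^1(D)^3$ then satisfies $\mathcal L_{\lambda,\mu}\boldsymbol w=0$ in $D$ and $\boldsymbol w=0$ on $\partial D$; Betti's first identity turns this into $\int_D\big(\lambda|\nabla\cdot\boldsymbol w|^2+2\mu\sum_{p,q}|\mathcal E_{pq}(\boldsymbol w)|^2\big)\,\mathrm d\boldsymbol x=0$, and the strong convexity condition \eqref{convexity} makes the integrand pointwise nonnegative and zero only when $\mathcal E(\boldsymbol w)=0$. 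Thus $\boldsymbol w$ is a rigid-body field; since it vanishes on the whole boundary of the bounded domain $D$ — which is not contained in any proper affine subspace of $\mathbb R^3$ — it must be identically zero, i.e. $\boldsymbol h=\boldsymbol\xi_i$ throughout $\overline D$. In particular the interior conormal derivative obeys $\partial_{\boldsymbol\nu}\boldsymbol h|_-=\lambda(\nabla\cdot\boldsymbol\xi_i)\boldsymbol\nu+\mu(\nabla\boldsymbol\xi_i+\nabla\boldsymbol\xi_i^\top)\boldsymbol\nu=0$, again because $\mathcal E(\boldsymbol\xi_i)\equiv0$.

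Finally I would invoke the jump relation \eqref{Jump2} with $k=0$, namely $\partial_{\boldsymbol\nu}\boldsymbol{\widetilde{\mathcal S}}_D[\boldsymbol\varphi]|_-=(-\tfrac12\boldsymbol{\mathcal I}+\boldsymbol{\mathcal K}^*_D)[\boldsymbol\varphi]$ on $\partial D$: the left-hand side is $\partial_{\boldsymbol\nu}\boldsymbol h|_-=0$ by the previous step, hence $(-\tfrac12\boldsymbol{\mathcal I}+\boldsymbol{\mathcal K}^*_D)\boldsymbol{\mathcal S}_D^{-1}[\boldsymbol\xi_i]=0$, which is \eqref{eq:xi}. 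The only point requiring genuine care is the uniqueness step: one must check that the mapping properties of $\boldsymbol{\widetilde{\mathcal S}}_D$ on $H^{-\frac12}(\partial D)^3$ give enough regularity on $\boldsymbol h$ (hence on $\boldsymbol w$) to legitimize Betti's identity, and one should record the elementary observation that a nonzero rigid-body motion cannot vanish on all of $\partial D$; everything else is a direct application of the layer-potential facts already collected above.
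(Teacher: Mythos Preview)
Your proof is correct and follows essentially the same route as the paper: both identify $\boldsymbol{\widetilde{\mathcal S}}_D[\boldsymbol\varphi]$ with $\boldsymbol\xi_i$ in $D$ by uniqueness of the interior Dirichlet problem, then read off the vanishing interior conormal derivative via the jump relation \eqref{Jump2}. The only difference is that the paper simply asserts uniqueness of the Dirichlet problem, whereas you spell it out through Betti's identity and the rigid-body-motion argument; this extra detail is welcome but not a different strategy.
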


\begin{proof}
Consider the Dirichlet boundary value problem:
\begin{align*}
\left\{ \begin{aligned}
&\mathcal{L}_{\lambda,\mu}\boldsymbol{u}=0&&\text{in }D,\\
&\boldsymbol{u}=\boldsymbol\xi_i&&\text{on }\partial D.
\end{aligned}\right.
 \end{align*}
This system has a unique solution $\boldsymbol{u}=\boldsymbol\xi_i=\widetilde{\boldsymbol{\mathcal{S}}}_{D}[\boldsymbol{\varphi}]$ in $D$. By applying the jump relation \eqref{Jump2} for $k=0$, we obtain
\begin{align*}
\big(-\frac{1}{2}\boldsymbol{\mathcal{I}}+\boldsymbol{\mathcal{K}}^{*}_{ D}\big)[\boldsymbol{\varphi}]=\partial_{\boldsymbol\nu}\boldsymbol\xi_i=0\quad\text{on }{\partial D}.
\end{align*}
Since  $\boldsymbol{\mathcal{S}}_{D}:H^{-\frac{1}{2}}(\partial D)^3\rightarrow H^{\frac{1}{2}}(\partial D)^3$ is invertible, we conclude that $\boldsymbol{\varphi}=\boldsymbol{\mathcal{S}}^{-1}_{D}[\boldsymbol\xi_i]$ on $\partial D$, which implies \eqref{eq:xi} and completes the proof.
\end{proof}

It is clear from Lemma \ref{le:xi} that for $n=1,2,3$,
\begin{align}\label{eq:ei}
\big(-\frac{1}{2}\boldsymbol{\mathcal{I}}+\boldsymbol{\mathcal{K}}^{*}_{ D}\big)\boldsymbol{\mathcal{S}}^{-1}_{D}[\mathbf{e}_n]=0\quad \text{on } \partial D.
\end{align}

Our next objective is to establish the analyticity and asymptotic expansion of the DtN map with respect to the wavenumber $k$ in a neighborhood of zero.

\begin{prop}\label{DtNasy}
Let $k\in\mathbb{C}$. The DtN operator $\boldsymbol{\mathcal{M}}^{k}:H^{\frac{1}{2}}(\partial D)^{3}\rightarrow H^{-\frac{1}{2}}(\partial D)^{3}$ is analytic with respect to $k$ and admits the following asymptotic
expansion as $k\rightarrow 0$:
\begin{align*}%\label{dtnasy}
\boldsymbol{\mathcal{M}}^{k}=\boldsymbol{\mathcal{M}}+k\boldsymbol{\mathcal{M}}_1+\mathcal{O}(k^2),
\end{align*}
where, for any $\boldsymbol{f}\in H^{\frac{1}{2}}(\partial D)^{3}$,
\begin{align}
\boldsymbol{\mathcal{M}}[\boldsymbol{f}]&:=\big(\frac{1}{2}\boldsymbol{\mathcal{I}}+\boldsymbol{\mathcal{K}}^{*}_{D}\big)\boldsymbol{\mathcal{S}}^{-1}_{D}[\boldsymbol{f}],\label{eq:M0}\\
\boldsymbol{\mathcal{M}}_1[\boldsymbol f]&:=\mathrm{i}\gamma
\sum_{n=1}^{3}\left(\int_{\partial D}\boldsymbol{\mathcal{M}}[\mathbf{e}_n]\cdot \boldsymbol{f}\mathrm{d}\sigma(\boldsymbol{x})\right)\boldsymbol{\mathcal{M}}[\mathbf{e}_n],\label{eq:M1}
\end{align}
with $\gamma$ being defined by \eqref{ri}.
\end{prop}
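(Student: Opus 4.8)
The plan is to represent the exterior solution $\boldsymbol{h}_{\boldsymbol f}$ as a single-layer potential and then track the $k$-dependence through the asymptotic expansions of the boundary integral operators already recorded in the excerpt. Concretely, I would write $\boldsymbol{h}_{\boldsymbol f}=\boldsymbol{\widetilde{\mathcal{S}}}^{k}_{D}[\boldsymbol\varphi]$ in $\mathbb{R}^3\setminus\overline{D}$, where the density $\boldsymbol\varphi\in H^{-\frac12}(\partial D)^3$ is chosen so that the Dirichlet datum is matched: $\boldsymbol{\mathcal{S}}^{k}_{D}[\boldsymbol\varphi]=\boldsymbol f$ on $\partial D$. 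This is solvable for small $k$ because $\boldsymbol{\mathcal{S}}^{k}_{D}$ is invertible in that regime, so $\boldsymbol\varphi=(\boldsymbol{\mathcal{S}}^{k}_{D})^{-1}[\boldsymbol f]$. Moreover the single-layer representation automatically satisfies the Lam\'e equation and the Kupradze--Sommerfeld radiation conditions for the compressional and shear parts, since the Kupradze matrix $\boldsymbol\Gamma^k$ does; one should remark that the decomposition of $\boldsymbol\Gamma^k$ into its $\mu$- and $(\lambda+2\mu)$-pieces gives exactly the two radiation conditions stated in Definition \ref{DtN:def}. Applying the jump relation \eqref{Jump2} for the conormal derivative from the exterior yields
\begin{align*}
\boldsymbol{\mathcal{M}}^{k}[\boldsymbol f]=\partial_{\boldsymbol\nu}\boldsymbol{\widetilde{\mathcal{S}}}^{k}_{D}[\boldsymbol\varphi]\big|_+=\big(\tfrac12\boldsymbol{\mathcal{I}}+\boldsymbol{\mathcal{K}}^{k,*}_{D}\big)\big(\boldsymbol{\mathcal{S}}^{k}_{D}\big)^{-1}[\boldsymbol f].
\end{align*}

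Analyticity in $k$ then follows immediately: by the Remark after the jump relations, $\boldsymbol{\mathcal{S}}^{k}_{D}$ and $\boldsymbol{\mathcal{K}}^{k,*}_{D}$ extend analytically in $k$ near $0$, inversion preserves analyticity on the open set where the operator is invertible, and composition of analytic operator-valued maps is analytic; hence $\boldsymbol{\mathcal{M}}^{k}=\big(\tfrac12\boldsymbol{\mathcal{I}}+\boldsymbol{\mathcal{K}}^{k,*}_{D}\big)(\boldsymbol{\mathcal{S}}^{k}_{D})^{-1}$ is analytic. In particular the leading term at $k=0$ is $\boldsymbol{\mathcal{M}}[\boldsymbol f]=\big(\tfrac12\boldsymbol{\mathcal{I}}+\boldsymbol{\mathcal{K}}^{*}_{D}\big)\boldsymbol{\mathcal{S}}^{-1}_{D}[\boldsymbol f]$, which is \eqref{eq:M0}.

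For the first-order term I would substitute the expansions \eqref{EK-series} for $\boldsymbol{\mathcal{K}}^{k,*}_{D}$ and \eqref{EIS:series} for $(\boldsymbol{\mathcal{S}}^{k}_{D})^{-1}$ and collect the coefficient of $k^1$. Since $\boldsymbol{\mathcal{K}}^{k,*}_{D}=\boldsymbol{\mathcal{K}}^{*}_{D}+\mathcal{O}(k^2)$ has no linear term, the only contribution at order $k$ comes from the linear term of $(\boldsymbol{\mathcal{S}}^{k}_{D})^{-1}$, giving
\begin{align*}
\boldsymbol{\mathcal{M}}_1[\boldsymbol f]=-\big(\tfrac12\boldsymbol{\mathcal{I}}+\boldsymbol{\mathcal{K}}^{*}_{D}\big)\boldsymbol{\mathcal{S}}^{-1}_{D}\boldsymbol{\mathcal{S}}_{D,1}\boldsymbol{\mathcal{S}}^{-1}_{D}[\boldsymbol f].
\end{align*}
Now I would insert the explicit form \eqref{ri} of $\boldsymbol{\mathcal{S}}_{D,1}$, namely $\boldsymbol{\mathcal{S}}_{D,1}[\boldsymbol\psi]=-\mathrm{i}\gamma\int_{\partial D}\boldsymbol\psi\,\mathrm{d}\sigma$. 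Writing $\boldsymbol\psi=\boldsymbol{\mathcal{S}}^{-1}_{D}[\boldsymbol f]$, the quantity $\int_{\partial D}\boldsymbol{\mathcal{S}}^{-1}_{D}[\boldsymbol f]\,\mathrm{d}\sigma$ is a vector in $\mathbb{R}^3$, whose $n$-th component is $\int_{\partial D}\mathbf{e}_n\cdot\boldsymbol{\mathcal{S}}^{-1}_{D}[\boldsymbol f]\,\mathrm{d}\sigma$. The key algebraic manoeuvre — and the step I expect to require the most care — is to recognize that $\boldsymbol{\mathcal{S}}_{D,1}\boldsymbol{\mathcal{S}}^{-1}_{D}[\boldsymbol f]=-\mathrm{i}\gamma\sum_{n=1}^{3}\big(\int_{\partial D}\mathbf{e}_n\cdot\boldsymbol{\mathcal{S}}^{-1}_{D}[\boldsymbol f]\,\mathrm{d}\sigma\big)\mathbf{e}_n$ and to rewrite the pairing using the self-adjointness of $\boldsymbol{\mathcal{S}}_{D}$ (equivalently, of $\boldsymbol{\mathcal{S}}^{-1}_{D}$) as a duality pairing on $\partial D$: $\int_{\partial D}\mathbf{e}_n\cdot\boldsymbol{\mathcal{S}}^{-1}_{D}[\boldsymbol f]\,\mathrm{d}\sigma=\int_{\partial D}\boldsymbol{\mathcal{S}}^{-1}_{D}[\mathbf{e}_n]\cdot\boldsymbol f\,\mathrm{d}\sigma$. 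Applying $\big(\tfrac12\boldsymbol{\mathcal{I}}+\boldsymbol{\mathcal{K}}^{*}_{D}\big)\boldsymbol{\mathcal{S}}^{-1}_{D}$ to each $\mathbf{e}_n$ produces $\boldsymbol{\mathcal{M}}[\mathbf{e}_n]$ by definition \eqref{eq:M0}, and the overall two minus signs and the factors of $\mathrm{i}\gamma$ combine to give precisely \eqref{eq:M1}. The only subtlety to verify is the symmetry $\int_{\partial D}\mathbf{e}_n\cdot\boldsymbol{\mathcal{S}}^{-1}_{D}[\boldsymbol f]\,\mathrm{d}\sigma=\int_{\partial D}\boldsymbol{\mathcal{S}}^{-1}_{D}[\mathbf{e}_n]\cdot\boldsymbol f\,\mathrm{d}\sigma$, which follows from the well-known symmetry of the elastostatic single-layer operator $\boldsymbol{\mathcal{S}}_{D}$ with respect to the $L^2(\partial D)^3$ pairing, together with the fact (used implicitly via \eqref{eq:ei}) that $\boldsymbol{\mathcal{S}}^{-1}_{D}[\mathbf{e}_n]$ is a well-defined density; the $\mathcal{O}(k^2)$ remainder is controlled uniformly in operator norm because all the expansions above have $\mathcal{O}(k^2)$ remainders in the appropriate operator norms.
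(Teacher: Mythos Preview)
Your approach is essentially identical to the paper's: represent the exterior solution by a single-layer potential, invert $\boldsymbol{\mathcal{S}}^{k}_{D}$, apply the jump relation to obtain $\boldsymbol{\mathcal{M}}^{k}=\big(\tfrac12\boldsymbol{\mathcal{I}}+\boldsymbol{\mathcal{K}}^{k,*}_{D}\big)(\boldsymbol{\mathcal{S}}^{k}_{D})^{-1}$, and expand using \eqref{EK-series} and \eqref{EIS:series}. The analyticity argument and the identification of $\boldsymbol{\mathcal{M}}$ and of the raw first-order term $-\big(\tfrac12\boldsymbol{\mathcal{I}}+\boldsymbol{\mathcal{K}}^{*}_{D}\big)\boldsymbol{\mathcal{S}}^{-1}_{D}\boldsymbol{\mathcal{S}}_{D,1}\boldsymbol{\mathcal{S}}^{-1}_{D}$ are exactly as in the paper.

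There is, however, one small but genuine gap in your final simplification. After the self-adjointness step your computation yields
\[
\boldsymbol{\mathcal{M}}_1[\boldsymbol f]=\mathrm{i}\gamma\sum_{n=1}^{3}\Big(\int_{\partial D}\boldsymbol{\mathcal{S}}^{-1}_{D}[\mathbf{e}_n]\cdot\boldsymbol f\,\mathrm{d}\sigma\Big)\boldsymbol{\mathcal{M}}[\mathbf{e}_n],
\]
which is not yet \eqref{eq:M1}: the scalar coefficient still carries $\boldsymbol{\mathcal{S}}^{-1}_{D}[\mathbf{e}_n]$, not $\boldsymbol{\mathcal{M}}[\mathbf{e}_n]$. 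To finish you need the identity $\boldsymbol{\mathcal{M}}[\mathbf{e}_n]=\boldsymbol{\mathcal{S}}^{-1}_{D}[\mathbf{e}_n]$, and this is precisely where \eqref{eq:ei} enters: from $\big(-\tfrac12\boldsymbol{\mathcal{I}}+\boldsymbol{\mathcal{K}}^{*}_{D}\big)\boldsymbol{\mathcal{S}}^{-1}_{D}[\mathbf{e}_n]=0$ one gets $\big(\tfrac12\boldsymbol{\mathcal{I}}+\boldsymbol{\mathcal{K}}^{*}_{D}\big)\boldsymbol{\mathcal{S}}^{-1}_{D}[\mathbf{e}_n]=\boldsymbol{\mathcal{S}}^{-1}_{D}[\mathbf{e}_n]$. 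Your parenthetical remark that \eqref{eq:ei} is used ``implicitly'' to ensure $\boldsymbol{\mathcal{S}}^{-1}_{D}[\mathbf{e}_n]$ is a well-defined density misplaces its role: well-definedness follows from the invertibility of $\boldsymbol{\mathcal{S}}_{D}$ alone, whereas \eqref{eq:ei} is what collapses $\boldsymbol{\mathcal{S}}^{-1}_{D}[\mathbf{e}_n]$ to $\boldsymbol{\mathcal{M}}[\mathbf{e}_n]$ inside the integral. The paper makes this step explicit.
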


\begin{proof}
For $\boldsymbol{\varphi}\in H^{-\frac{1}{2}}(\partial D)^{3}$ and $k\in\mathbb{R}$, if $\boldsymbol{f}=\boldsymbol{\mathcal{S}}^{k}_{D}[\boldsymbol{\varphi}]$ on $\partial D$, then $\boldsymbol{h}_{\boldsymbol{f}}=\widetilde{\boldsymbol{\mathcal{S}}}^{k}_{D}[\boldsymbol{\varphi}]$ in $\mathbb{R}^3\setminus \overline{D}$. It is clear that $\boldsymbol{\varphi}=(\boldsymbol{\mathcal{S}}^{k}_{D})^{-1}[\boldsymbol{f}]$ on $\partial D$. Moreover,
\begin{align}\label{DtN-NP}
\boldsymbol{\mathcal{M}}^{k}[\boldsymbol f]
=\big(\frac{1}{2}\boldsymbol{\mathcal{I}}+\boldsymbol{\mathcal{K}}^{k,*}_{D}\big)[\boldsymbol\varphi]
=\big(\frac{1}{2}\boldsymbol{\mathcal{I}}+\boldsymbol{\mathcal{K}}^{k,*}_{D}\big)(\boldsymbol{\mathcal{S}}^{k}_{D})^{-1}[\boldsymbol f]\quad\text{on } \partial D.
\end{align}
In particular, $\boldsymbol{\mathcal{M}}=\big(\frac{1}{2}\boldsymbol{\mathcal{I}}+\boldsymbol{\mathcal{K}}^{*}_{D}\big)\boldsymbol{\mathcal{S}}^{-1}_{D}$. The analyticity of $\boldsymbol{\mathcal{M}}^{k}$ with respect to $k\in\mathbb{C}$
follows from the analyticity of $\boldsymbol{\mathcal{S}}^{k}_{D}$ and $\boldsymbol{\mathcal{K}}^{k,*}_{D}$ with respect to $k\in\mathbb{C}$.

Combining \eqref{EK-series} with \eqref{EIS:series} yields
\begin{align*}
\boldsymbol{\mathcal{M}}^{k}=\big(\frac{1}{2}\boldsymbol{\mathcal{I}}+\boldsymbol{\mathcal{K}}^{*}_{D}\big)\boldsymbol{\mathcal{S}}^{-1}_{D}
-k\big(\frac{1}{2}\boldsymbol{\mathcal{I}}+\boldsymbol{\mathcal{K}}^{*}_{D}\big)\boldsymbol{\mathcal{S}}^{-1}_{D}\boldsymbol{\mathcal{S}}_{D,1}\boldsymbol{\mathcal{S}}^{-1}_{D}+\mathcal{O}(k^2).
\end{align*}
It follows from \eqref{eq:ei} that
\begin{align*}
\big(\frac{1}{2}\boldsymbol{\mathcal{I}}+\boldsymbol{\mathcal{K}}^{*}_{D}\big)\boldsymbol{\mathcal{S}}^{-1}_{ D}[\mathbf{e}_n]=\boldsymbol{\mathcal{S}}^{-1}_{D}[\mathbf{e}_n],\quad n=1,2,3,
\end{align*}
which gives
\begin{align*}%\label{static M[e]}
\boldsymbol{\mathcal{M}}[\mathbf{e}_n]=\big(\frac{1}{2}\boldsymbol{\mathcal{I}}+\boldsymbol{\mathcal{K}}^{*}_{D}\big)\boldsymbol{\mathcal{S}}^{-1}_{ D}[\mathbf{e}_n]=\boldsymbol{\mathcal{S}}^{-1}_{D}[\mathbf{e}_n],\quad n=1,2,3.
\end{align*}
Since $\boldsymbol{\mathcal{S}}^{-1}_{D}$ is self-adjoint, we can deduce that for any $\boldsymbol{f}\in H^{\frac{1}{2}}(\partial D)^{3}$,
\begin{align*}
-\big(\frac{1}{2}\boldsymbol{\mathcal{I}}
+\boldsymbol{\mathcal{K}}^{*}_{D}\big)\boldsymbol{\mathcal{S}}^{-1}_{D}\boldsymbol{\mathcal{S}}_{D,1}\boldsymbol{\mathcal{S}}^{-1}_{D}[\boldsymbol f]&=\mathrm{i}\gamma \big(\frac{1}{2}\boldsymbol{\mathcal{I}}+\boldsymbol{\mathcal{K}}^{*}_{D}\big)
\boldsymbol{\mathcal{S}}^{-1}_{D}\int_{\partial D}\boldsymbol{\mathcal{S}}^{-1}_{D}[\boldsymbol{f}]\mathrm{d}\sigma(\boldsymbol x)\\
&=\mathrm{i}\gamma \big(\frac{1}{2}\boldsymbol{\mathcal{I}}+\boldsymbol{\mathcal{K}}^{*}_{D}\big)
\boldsymbol{\mathcal{S}}^{-1}_{D}\Bigg[\sum^3_{n=1}\bigg(\int_{\partial D}\boldsymbol{\mathcal{S}}^{-1}_{D}[\boldsymbol{f}]\cdot \mathbf{e}_n\mathrm{d}\sigma(\boldsymbol x)\bigg)\mathbf{e}_n\Bigg]\\
&=\mathrm{i}\gamma \big(\frac{1}{2}\boldsymbol{\mathcal{I}}+\boldsymbol{\mathcal{K}}^{*}_{D}\big)
\boldsymbol{\mathcal{S}}^{-1}_{D}\Bigg[\sum^3_{n=1}\bigg(\int_{\partial D}\boldsymbol{\mathcal{S}}^{-1}_{D}[\mathbf{e}_n](\boldsymbol x)\cdot \boldsymbol{f}\mathrm{d}\sigma(\boldsymbol x)\bigg)\mathbf{e}_n\Bigg]\\
&=\mathrm{i}\gamma \sum^3_{n=1}\bigg(\int_{\partial D}\boldsymbol{\mathcal{M}}[\boldsymbol{e}_n](\boldsymbol x)\cdot \boldsymbol{f}\mathrm{d}\sigma(\boldsymbol x)\bigg)\boldsymbol{\mathcal{M}}[\mathbf{e}_n],
\end{align*}
which completes the proof.
\end{proof}

Define two $6\times6$ matrices by
\begin{align}\label{matrix:QR}
\boldsymbol{\mathscr{Q}}:=(\mathscr{Q}_{ij})_{i,j=1}^6, \quad \boldsymbol{\mathscr{R}}:=(\mathscr{R}_{ij})^6_{i,j=1},
\end{align}
where
\begin{align}
\mathscr{Q}_{ij}&:=-\int_{\partial D}\boldsymbol{\mathcal{M}}[\boldsymbol{\xi}_i]\cdot\boldsymbol{\xi}_j\mathrm{d}\sigma(\boldsymbol{x}),\label{eq:qij}\\
\mathscr{R}_{ik}&:=\sum^{3}_{n=1}c_{ni}c_{nk},\label{eq:Rij}
\end{align}
with
\begin{align}\label{eq:pij}
c_{nk}:=-\int_{\partial D}\boldsymbol{\mathcal{M}}[\mathbf{e}_n]\cdot\boldsymbol{\xi}_k\mathrm{d}\sigma(\boldsymbol{x}),\quad n=1,2,3,\quad k=1,\cdots,6,
\end{align}
and $\boldsymbol{\mathcal{M}}$ being defined by \eqref{eq:M0}.

The remainder of this subsection is to study the properties of the two matrices $\boldsymbol{\mathscr{Q}}$ and $\boldsymbol{\mathscr{R}}$.

\begin{lemm}\label{Q:symmetric}
The following properties hold:
\begin{enumerate}

\item[(i)] The matrix $\boldsymbol{\mathscr{Q}}$ is real-valued symmetric positive definite.

\item[(ii)] The matrix $\boldsymbol{\mathscr{R}}$ is real-valued symmetric positive semi-definite.

\end{enumerate}
\end{lemm}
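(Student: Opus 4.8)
The plan is to reduce everything to the variational characterization of the DtN map. Recall that for $\boldsymbol{f}\in H^{1/2}(\partial D)^3$, the function $\boldsymbol{h}_{\boldsymbol{f}}$ is the unique radiating solution of $\mathcal{L}_{\lambda,\mu}\boldsymbol{h}_{\boldsymbol{f}}=0$ in $\mathbb{R}^3\setminus\overline{D}$ with $\boldsymbol{h}_{\boldsymbol{f}}=\boldsymbol{f}$ on $\partial D$, and $\boldsymbol{\mathcal{M}}[\boldsymbol{f}]=\partial_{\boldsymbol{\nu}}\boldsymbol{h}_{\boldsymbol{f}}|_+$. Integration by parts in the exterior domain (using the decay at infinity coming from the $k=0$ Kupradze radiation condition, so that the boundary term at infinity vanishes) gives, for two boundary data $\boldsymbol{f},\boldsymbol{g}$,
\begin{align*}
\int_{\partial D}\boldsymbol{\mathcal{M}}[\boldsymbol{f}]\cdot\overline{\boldsymbol{g}}\,\mathrm{d}\sigma
=-\int_{\mathbb{R}^3\setminus\overline{D}}\Big(\lambda(\nabla\cdot\boldsymbol{h}_{\boldsymbol{f}})(\nabla\cdot\overline{\boldsymbol{h}_{\boldsymbol{g}}})
+2\mu\,\mathcal{E}(\boldsymbol{h}_{\boldsymbol{f}}):\overline{\mathcal{E}(\boldsymbol{h}_{\boldsymbol{g}})}\Big)\mathrm{d}\boldsymbol{x},
\end{align*}
where the sign comes from the fact that $\boldsymbol{\nu}$ is the outward normal of $D$, i.e. the inward normal of the exterior domain. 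This identity does all the work. First, it shows the bilinear form $(\boldsymbol{f},\boldsymbol{g})\mapsto\int_{\partial D}\boldsymbol{\mathcal{M}}[\boldsymbol{f}]\cdot\boldsymbol{g}\,\mathrm{d}\sigma$ is symmetric (the integrand on the right is symmetric in $\boldsymbol{f},\boldsymbol{g}$) and real when $\boldsymbol{f},\boldsymbol{g}$ are real-valued. Hence $\mathscr{Q}_{ij}=\mathscr{Q}_{ji}$, and since the $\boldsymbol{\xi}_i$ are real-valued, $\boldsymbol{\mathscr{Q}}$ is a real symmetric matrix. Symmetry of $\boldsymbol{\mathscr{R}}$ is immediate from its definition $\mathscr{R}_{ik}=\sum_n c_{ni}c_{nk}$, and the $c_{nk}$ are real by the same reality statement applied to $\boldsymbol{f}=\mathbf{e}_n$.

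For the definiteness statements, take any $\boldsymbol{t}=(t_1,\dots,t_6)^\top\in\mathbb{R}^6$ and set $\boldsymbol{\eta}:=\sum_{i=1}^6 t_i\boldsymbol{\xi}_i\in\Xi$. By the above identity,
\begin{align*}
\boldsymbol{t}^\top\boldsymbol{\mathscr{Q}}\boldsymbol{t}
=-\int_{\partial D}\boldsymbol{\mathcal{M}}[\boldsymbol{\eta}]\cdot\boldsymbol{\eta}\,\mathrm{d}\sigma
=\int_{\mathbb{R}^3\setminus\overline{D}}\Big(\lambda|\nabla\cdot\boldsymbol{h}_{\boldsymbol{\eta}}|^2
+2\mu\,|\mathcal{E}(\boldsymbol{h}_{\boldsymbol{\eta}})|^2\Big)\mathrm{d}\boldsymbol{x}\ \ge\ 0,
\end{align*}
where non-negativity follows from the strong convexity conditions \eqref{convexity}, which guarantee $\lambda|\mathrm{tr}\,\mathcal{E}|^2+2\mu|\mathcal{E}|^2\ge c|\mathcal{E}|^2$ for some $c>0$ (the standard pointwise estimate: decompose $\mathcal{E}$ into its trace-free and spherical parts). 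Thus $\boldsymbol{\mathscr{Q}}$ is positive semi-definite. For strict positivity, suppose $\boldsymbol{t}^\top\boldsymbol{\mathscr{Q}}\boldsymbol{t}=0$. Then $\mathcal{E}(\boldsymbol{h}_{\boldsymbol{\eta}})=0$ a.e. in $\mathbb{R}^3\setminus\overline{D}$, so $\boldsymbol{h}_{\boldsymbol{\eta}}$ is a rigid motion in the (connected) exterior domain; being radiating, hence decaying at infinity, it must vanish there, which forces $\boldsymbol{\eta}=\boldsymbol{h}_{\boldsymbol{\eta}}|_{\partial D}=0$ on $\partial D$ and therefore $\boldsymbol{\eta}\equiv 0$ in $D$ as well (a nonzero rigid motion has no zero of infinite order). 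Since $\{\boldsymbol{\xi}_i\}$ is a basis of $\Xi$, we get $\boldsymbol{t}=0$. This proves (i). For (ii), $\boldsymbol{t}^\top\boldsymbol{\mathscr{R}}\boldsymbol{t}=\sum_{n=1}^3\big(\sum_k t_k c_{nk}\big)^2\ge 0$ trivially, giving positive semi-definiteness; one need not claim more, since the $c_{nk}$ can (and generically do) have nontrivial left kernel over the three indices $n$.

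The main obstacle is the rigorous justification of the exterior Green's identity: one must confirm that the radiation conditions stated in Definition \ref{DtN:def} imply $\boldsymbol{h}_{\boldsymbol{\eta}}=O(|\boldsymbol{x}|^{-1})$ with gradient $O(|\boldsymbol{x}|^{-2})$ as $|\boldsymbol{x}|\to\infty$ — this is where the behavior of $\boldsymbol{\Gamma}^0$ (Kelvin matrix, which decays like $|\boldsymbol{x}|^{-1}$) enters, since any solution of the homogeneous Lamé system in an exterior domain that is radiating in this sense is represented by a single-layer potential of a density on $\partial D$ and hence inherits this decay. With that decay the surface integral over the large sphere $\{|\boldsymbol{x}|=R\}$ tends to $0$ as $R\to\infty$, and the identity holds. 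Everything else — the pointwise coercivity inequality for the elastic energy density under \eqref{convexity}, the Liouville-type rigidity argument, and the bookkeeping identifying $\boldsymbol{t}^\top\boldsymbol{\mathscr{Q}}\boldsymbol{t}$ and $\boldsymbol{t}^\top\boldsymbol{\mathscr{R}}\boldsymbol{t}$ — is routine. One small point worth flagging: the quadratic form identity for $\boldsymbol{\mathscr{R}}$ can alternatively be read off directly from \eqref{eq:M1} and \eqref{eq:Rij} without re-deriving the Green's identity, since $\mathscr{R}_{ik}$ is manifestly a Gram matrix of the real vectors $(c_{1k},c_{2k},c_{3k})$.
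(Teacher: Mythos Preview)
Your argument is correct, but for part (i) it follows a genuinely different route from the paper. The paper works entirely through boundary integral operators: it sets $\boldsymbol{\zeta}_i=\boldsymbol{\mathcal{S}}_D^{-1}[\boldsymbol{\xi}_i]$, obtains symmetry of $\boldsymbol{\mathscr{Q}}$ from the Calder\'on identity $\boldsymbol{\mathcal{K}}_D\boldsymbol{\mathcal{S}}_D=\boldsymbol{\mathcal{S}}_D\boldsymbol{\mathcal{K}}_D^*$, and then uses Lemma~\ref{le:xi} to rewrite $\boldsymbol{a}^\top\boldsymbol{\mathscr{Q}}\boldsymbol{a}$ as $\langle -\boldsymbol{\mathcal{S}}_D[\sum a_i\boldsymbol{\zeta}_i],\sum a_i\boldsymbol{\zeta}_i\rangle$, invoking the known positive definiteness of $-\boldsymbol{\mathcal{S}}_D$. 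Your proof instead reads $\boldsymbol{\mathscr{Q}}$ as the exterior elastic energy of the harmonic extension via Green's identity, getting symmetry and nonnegativity immediately from the energy integrand, and strict positivity from a Liouville-type rigidity argument (decaying rigid motion $\Rightarrow$ zero). Both are standard; the paper's route is purely algebraic once the layer-potential facts are in hand, while yours is more self-contained and, amusingly, more in keeping with the paper's own advertised variational philosophy. The only point requiring care in your version is the vanishing of the surface term at infinity, which you correctly identify and resolve via the Kelvin-matrix decay of the single-layer representation. For part (ii) your Gram-matrix observation is exactly what the paper does.
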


\begin{proof}
First, we prove (i). It is evident that $\boldsymbol{\mathscr{Q}}$ is real-valued. Let $\boldsymbol{\zeta}_i:=\boldsymbol{\mathcal{S}}^{-1}_{D}[\boldsymbol{\xi}_i],i=1,\cdots,6$.  Then, we have
\begin{align*}
\mathscr{Q}_{ij}=-\int_{\partial D}\boldsymbol{\mathcal{M}}[\boldsymbol{\xi}_i]\cdot\boldsymbol{\xi}_j\mathrm{d}\sigma(\boldsymbol{x})&=-\int_{\partial D}\big(\frac{1}{2}\boldsymbol{\mathcal{I}}
+\boldsymbol{\mathcal{K}}^*_{D}\big)[\boldsymbol{\zeta}_i]\cdot\boldsymbol{\mathcal{S}}_{D}[\boldsymbol{\zeta}_j]\mathrm{d}\sigma(\boldsymbol{x}).
\end{align*}
By applying the Calder\'{o}n identity
\begin{align*}
\boldsymbol{\mathcal{K}}_D\boldsymbol{\mathcal{S}}_D=\boldsymbol{\mathcal{S}}_D\boldsymbol{\mathcal{K}}^*_D,
\end{align*}
we obtain
\begin{align*}
\mathscr{Q}_{ij}
&=-\int_{\partial D}\boldsymbol{\zeta}_i\cdot\big(\frac{1}{2}\boldsymbol{\mathcal{I}}+\boldsymbol{\mathcal{K}}_{D}\big)
\boldsymbol{\mathcal{S}}_{D}[\boldsymbol{\zeta}_j]\mathrm{d}\sigma(\boldsymbol{x})\\
&=-\int_{\partial D}\boldsymbol{\zeta}_i\cdot\boldsymbol{\mathcal{S}}_{D}\big(\frac{1}{2}\boldsymbol{\mathcal{I}}+\boldsymbol{\mathcal{K}}^*_{D}\big)
[\boldsymbol{\zeta }_j]\mathrm{d}\sigma(\boldsymbol{x})\\
&=-\int_{\partial D}\boldsymbol{\mathcal{S}}_{D}[\boldsymbol{\zeta}_i]\cdot\big(\frac{1}{2}\boldsymbol{\mathcal{I}}+\boldsymbol{\mathcal{K}}^*_{D}\big)
[\boldsymbol{\zeta }_j]\mathrm{d}\sigma(\boldsymbol{x})
=\mathscr{Q}_{ji},
\end{align*}
which indicates that $\boldsymbol{\mathscr{Q}}$ is symmetric.

It remains to verify the positive definiteness of $\boldsymbol{\mathscr{Q}}$. From \eqref{eq:xi}, we have
\begin{align*}
\big(\frac{1}{2}\boldsymbol{\mathcal{I}}+\boldsymbol{\mathcal{K}}^*_{D}\big)[\boldsymbol{\zeta}_i]=\boldsymbol{\zeta}_i,\quad i=1,\cdots,6.
\end{align*}
Furthermore, since the vectors $\boldsymbol{\xi}_i,i=1,\cdots,6,$ are linearly independent, it follows that $\sum_{i=1}^6a_i\boldsymbol{\xi}_i\neq0$ for any $\boldsymbol{a}=(a_i)^6_{i=1}\in\mathbb{R}^6\backslash\{0\}$. This leads to
\begin{align*}
\sum_{i=1}^6a_i\boldsymbol{\zeta}_i=\boldsymbol{\mathcal{S}}^{-1}_{D}\left[\sum_{i=1}^6a_i\boldsymbol{\xi}_i\right]\neq0.
\end{align*}
Consequently, combining this result with the fact that $-\boldsymbol{\mathcal{S}}_{D}$ is positive definite (cf. \cite{ando2018spectral}), we obtain that for any $\boldsymbol{a}=(a_i)^6_{i=1}\in\mathbb{R}^6\backslash\{0\}$,
\begin{align*}
\boldsymbol{a}^\top\boldsymbol{\mathscr{Q}}\boldsymbol{a}&=\sum_{i,j=1}^6a_i\mathscr{Q}_{ij}a_j
=-\sum_{i,j=1}^6a_i\bigg(\int_{\partial D}\boldsymbol{\zeta}_i\cdot\boldsymbol{\mathcal{S}}_{D}[\boldsymbol{\zeta}_j]\mathrm{d}\sigma(\boldsymbol{x})\bigg)a_j\\
&=\int_{\partial D}\bigg(\sum_{i=1}^6a_i\boldsymbol{\zeta}_i\bigg)\cdot(-\boldsymbol{\mathcal{S}}_{D})
\bigg[\sum^6_{j=1}a_j\boldsymbol{\zeta}_j\bigg]\mathrm{d}\sigma(\boldsymbol{x})\\
&=\int_{\partial D}\bigg(\sum_{i=1}^6a_i\boldsymbol{\zeta}_i\bigg)\cdot(-\boldsymbol{\mathcal{S}}_{D})
\bigg[\sum^6_{i=1}a_i\boldsymbol{\zeta}_i\bigg]\mathrm{d}\sigma(\boldsymbol{x})>0.
\end{align*}
Thus, we conclude that $\boldsymbol{\mathscr{Q}}$ is positive definite.

Next, we prove (ii).  Clearly, the matrix $\boldsymbol{\mathscr{R}}$ is real-valued and symmetric. By employing a similar argument to that used in the proof of the positive definiteness of $\boldsymbol{\mathscr{Q}}$, we deduce that for any $\boldsymbol{a}=(a_i)^6_{i=1}\in\mathbb{R}^6\backslash\{0\}$,
\begin{align*}
\boldsymbol{a}^\top\boldsymbol{\mathscr{R}}\boldsymbol{a}=\sum_{i,j=1}^6a_i\bigg(\sum_{n=1}^3c_{ni}c_{nj}\bigg)a_j
=\sum^3_{n=1}\bigg(\sum^6_{i=1}c_{ni}a_i\bigg)^2\geq0.
\end{align*}
Hence, the matrix $\boldsymbol{\mathscr{R}}$ is positive semi-definite.
\end{proof}

\section{Analysis of subwavelength resonance}\label{sec:3}

In this section, our main objective is to rewrite \eqref{Lame} within the bounded domain $D$ using the DtN map. Next, by utilizing an auxiliary sesquilinear form, we seek a necessary and sufficient condition for the well-posedness of the solution to this boundary value problem, which involves ensuring that the determinant of a specific matrix does not vanish. Based on this condition, we provide an explicit characterization of subwavelength resonances. Finally, we present the asymptotic expressions for the frequencies at which these resonances occur.

According to Definition \ref{DtN:def} regarding the DtN map, we can reformulate the scattering problem \eqref{Lame} with $\boldsymbol{\mathcal{M}}^{\sqrt{\rho}\omega}$ as a boundary value problem posed on the bounded
domain $D$:
\begin{align}\label{int:solu}
\left\{ \begin{aligned}
&\mathcal{L}_{\lambda,\mu}\boldsymbol{u}+\rho\tau^2\omega^2\boldsymbol{u}=0&&\text{in } D,\\
&\partial_{\boldsymbol{\nu}} \boldsymbol{u}=\delta\boldsymbol{\mathcal{M}}^{\sqrt{\rho}\omega}[\boldsymbol{u}-\boldsymbol{u}^{\mathrm{in}}]+\delta\partial_{\boldsymbol{\nu}}\boldsymbol{u}^{\mathrm{in}}&&\text{on }\partial D.
\end{aligned}\right.
\end{align}
The value of $\boldsymbol{u}$ in $D$ is fully determined by the system \eqref{int:solu}, whereas the value of $\boldsymbol{u}$ outside $D$ is obtained by solving an exterior elastic problem. To avoid confusion, we denote by $\boldsymbol{u}^{\mathrm{ex}}$ the solution of the elastic problem \eqref{Lame} in $\mathbb{R}^3\backslash\overline{D}$. In the regime $\omega\rightarrow0$, using the transmission conditions in \eqref{Lame} implies that for all $\boldsymbol x\in\mathbb{R}^3\backslash\overline{D}$,
\begin{align}\label{ext:solu}
\boldsymbol{u}^{\mathrm{ex}}(\boldsymbol x)=\boldsymbol{u}^{\mathrm{in}}(\boldsymbol x)+\boldsymbol{\widetilde{\boldsymbol{\mathcal{S}}}}^{\sqrt{\rho}\omega}_{D}[(\boldsymbol{\mathcal{S}}^{\sqrt{\rho}\omega}_{ D})^{-1}[\boldsymbol{u}|_{\partial D}-\boldsymbol{u}^{\mathrm{in}}|_{\partial D}]](\boldsymbol x),
\end{align}
where the trace $\boldsymbol{u}|_{\partial D}$ is defined by \eqref{int:solu}.

By multiplying a test function $\overline{\boldsymbol{v}}\in H^{1}(D)^3$ and using the identity
\begin{align*}
(\nabla \boldsymbol{u}+\nabla \boldsymbol{u}^\top):\nabla \overline{\boldsymbol{v}}=
(\nabla \boldsymbol{u}+\nabla \boldsymbol{u}^\top):\nabla \overline{\boldsymbol{v}}^\top,
\end{align*}
along with integration by parts, we obtain the variational formulation of \eqref{int:solu} as follows:
\begin{align}\label{int:varia}
\lambda\int_{D}(\nabla\cdot\boldsymbol{u})(\nabla\cdot\overline{\boldsymbol{v}})\mathrm{d}\boldsymbol{x}
+\frac{\mu}{2}\int_{D}(\nabla\boldsymbol{u}+\nabla\boldsymbol{u}^\top):
(\nabla\overline{\boldsymbol{v}}+\nabla\overline{\boldsymbol{v}}^\top)
\mathrm{d}\boldsymbol{x}-\rho \tau^{2}\omega^2\int_{D}\boldsymbol{u}\cdot\overline{\boldsymbol{v}}\mathrm{d}\boldsymbol{x}\nonumber\\
-\delta\int_{\partial D}\boldsymbol{\mathcal{M}}^{\sqrt{\rho}\omega}
[\boldsymbol{u}]\cdot\overline{\boldsymbol{v}}\mathrm{d}\sigma(\boldsymbol{x})
=\delta\int_{\partial D}\big(\partial_{\boldsymbol{\nu}} \boldsymbol{u}^{\mathrm{in}}-\boldsymbol{\mathcal{M}}^{\sqrt{\rho}\omega}
[\boldsymbol{u}^{\mathrm{in}}]\big)\cdot\overline{\boldsymbol{v}}\mathrm{d}\sigma(\boldsymbol{x}).
\end{align}
 The notation $A:B=\mathrm{tr}(AB^{\top})$ represents the Frobenius product of square matrices $A$ and $B$.

\subsection{An auxiliary sesquilinear form}

For $\boldsymbol{u},\boldsymbol{v}\in H^{1}(D)^{3}$, we define a sesquilinear form by adding the sesquilinear form $\sum_{i=1}^6\int_{D}\boldsymbol{u}\cdot\boldsymbol{\xi}_i\mathrm{d}\boldsymbol{x}\int_{D}\overline{\boldsymbol{v}}\cdot\boldsymbol{\xi}_i\mathrm{d}\boldsymbol{x}$ to the left-hand side of \eqref{int:varia} as follows:
\begin{align}\label{a(u,v)}
a_{\omega,\delta}(\boldsymbol{u},\boldsymbol{v})&:=a_{0,0}(\boldsymbol{u},\boldsymbol{v})-\rho\tau^{2}\omega^2\int_{D}\boldsymbol{u}\cdot\overline{\boldsymbol{v}}\mathrm{d}\boldsymbol{x}-\delta\int_{\partial D}\boldsymbol{\mathcal{M}}^{\sqrt{\rho}\omega}[\boldsymbol{u}]\cdot\overline{\boldsymbol{v}}\mathrm{d}\sigma(\boldsymbol{x}),
\end{align}
where
\begin{align*}%\label{a00(u,v)}
a_{0,0}(\boldsymbol{u},\boldsymbol{v})&:=\lambda\int_{D}(\nabla\cdot\boldsymbol{u})(\nabla\cdot\overline{\boldsymbol{v}})\mathrm{d}\boldsymbol{x}
+\frac{\mu}{2}\int_{D}(\nabla\boldsymbol{u}+\nabla\boldsymbol{u}^\top):(\nabla\overline{\boldsymbol{v}}+\nabla\overline{\boldsymbol{v}}^\top)
\mathrm{d}\boldsymbol{x}\nonumber\\
&\quad+\sum_{i=1}^6\int_{D}\boldsymbol{u}\cdot\boldsymbol{\xi}_i\mathrm{d}\boldsymbol{x}\int_{D}\overline{\boldsymbol{v}}\cdot\boldsymbol{\xi}_i\mathrm{d}\boldsymbol{x}.
\end{align*}
Observe that the sesquilinear form $a_{\omega,\delta}$ is an analytic perturbation in $\omega$ and $\delta$ of  $a_{0,0}$.

\begin{lemm}\label{lemma coer}
The sesquilinear form $a_{\omega,\delta}$, defined by \eqref{a(u,v)}, is bounded and coercive.
\end{lemm}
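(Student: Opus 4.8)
The plan is to verify the two properties of $a_{\omega,\delta}$ separately, treating the term $\delta\int_{\partial D}\boldsymbol{\mathcal{M}}^{\sqrt{\rho}\omega}[\boldsymbol{u}]\cdot\overline{\boldsymbol{v}}\,\mathrm{d}\sigma$ and the term $\rho\tau^2\omega^2\int_D\boldsymbol{u}\cdot\overline{\boldsymbol{v}}\,\mathrm{d}\boldsymbol{x}$ as small analytic perturbations of $a_{0,0}$, so that the real work is to show $a_{0,0}$ is bounded and coercive on $H^1(D)^3$. Boundedness is routine: the first two terms of $a_{0,0}$ are controlled by $\|\boldsymbol{u}\|_{H^1(D)^3}\|\boldsymbol{v}\|_{H^1(D)^3}$ using Cauchy--Schwarz, and the rank-six correction term is bounded by $\sum_{i=1}^6 \|\boldsymbol{\xi}_i\|_{L^2}^2\|\boldsymbol{u}\|_{L^2}\|\boldsymbol{v}\|_{L^2}$; for the perturbed form one additionally uses that $\boldsymbol{\mathcal{M}}^{\sqrt{\rho}\omega}:H^{1/2}(\partial D)^3\to H^{-1/2}(\partial D)^3$ is bounded (Proposition \ref{DtNasy}) together with the trace theorem $\|\boldsymbol{u}|_{\partial D}\|_{H^{1/2}(\partial D)^3}\lesssim\|\boldsymbol{u}\|_{H^1(D)^3}$.

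The crux is coercivity of $a_{0,0}$. The natural route is a Korn-type argument: by the second Korn inequality on the bounded Lipschitz (here $C^2$) domain $D$ there is a constant $C>0$ with
\begin{align*}
\|\boldsymbol{u}\|_{H^1(D)^3}^2 \le C\Big(\|\boldsymbol{u}\|_{L^2(D)^3}^2 + \int_D \sum_{i,j=1}^3 |\mathcal{E}_{ij}(\boldsymbol{u})|^2\,\mathrm{d}\boldsymbol{x}\Big),
\end{align*}
and the strain-energy term $\frac{\mu}{2}\int_D(\nabla\boldsymbol{u}+\nabla\boldsymbol{u}^\top):(\nabla\overline{\boldsymbol{u}}+\nabla\overline{\boldsymbol{u}}^\top)\,\mathrm{d}\boldsymbol{x} = 2\mu\int_D\sum_{i,j}|\mathcal{E}_{ij}(\boldsymbol{u})|^2\,\mathrm{d}\boldsymbol{x}$ together with $\lambda\int_D|\nabla\cdot\boldsymbol{u}|^2\ge 0$ when $\lambda\ge0$ — or, when $\lambda<0$, using the strong convexity $3\lambda+2\mu>0$ to retain a positive fraction of the strain energy via the identity $\mathds{C}\mathcal{E}(\boldsymbol{u}):\overline{\mathcal{E}(\boldsymbol{u})} = \lambda|\mathrm{tr}\,\mathcal{E}|^2 + 2\mu|\mathcal{E}|^2 \ge c_0|\mathcal{E}(\boldsymbol{u})|^2$ — controls $\int_D|\mathcal{E}(\boldsymbol{u})|^2$ from below by a multiple of $a_{0,0}(\boldsymbol{u},\boldsymbol{u})$ minus the correction term. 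Thus it remains to absorb the $\|\boldsymbol{u}\|_{L^2}^2$ on the right-hand side of Korn's inequality, and this is exactly what the added rank-six term $\sum_{i=1}^6|\int_D\boldsymbol{u}\cdot\boldsymbol{\xi}_i\,\mathrm{d}\boldsymbol{x}|^2$ is designed to do: I would argue by contradiction, supposing a sequence $\boldsymbol{u}_n$ with $\|\boldsymbol{u}_n\|_{H^1(D)^3}=1$ and $a_{0,0}(\boldsymbol{u}_n,\boldsymbol{u}_n)\to0$; then $\int_D|\mathcal{E}(\boldsymbol{u}_n)|^2\to0$ and each $\int_D\boldsymbol{u}_n\cdot\boldsymbol{\xi}_i\,\mathrm{d}\boldsymbol{x}\to0$. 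By Rellich compactness extract $\boldsymbol{u}_n\to\boldsymbol{u}_*$ strongly in $L^2(D)^3$; Korn's inequality shows $\boldsymbol{u}_n$ is Cauchy in $H^1(D)^3$, so $\boldsymbol{u}_n\to\boldsymbol{u}_*$ in $H^1$ with $\|\boldsymbol{u}_*\|_{H^1}=1$ and $\mathcal{E}_{ij}(\boldsymbol{u}_*)=0$, whence $\boldsymbol{u}_*\in\Xi$; but also $\int_D\boldsymbol{u}_*\cdot\boldsymbol{\xi}_i\,\mathrm{d}\boldsymbol{x}=0$ for all $i$, and since $\{\boldsymbol{\xi}_i\}$ is an orthonormal basis of $\Xi$ this forces $\boldsymbol{u}_*=0$, contradicting $\|\boldsymbol{u}_*\|_{H^1}=1$. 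Hence $a_{0,0}(\boldsymbol{u},\boldsymbol{u})\ge c\|\boldsymbol{u}\|_{H^1(D)^3}^2$ for some $c>0$.

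Finally I would transfer coercivity to $a_{\omega,\delta}$: write $a_{\omega,\delta}(\boldsymbol{u},\boldsymbol{u}) = a_{0,0}(\boldsymbol{u},\boldsymbol{u}) - \rho\tau^2\omega^2\|\boldsymbol{u}\|_{L^2(D)^3}^2 - \delta\int_{\partial D}\boldsymbol{\mathcal{M}}^{\sqrt{\rho}\omega}[\boldsymbol{u}]\cdot\overline{\boldsymbol{u}}\,\mathrm{d}\sigma$; the second term is bounded in modulus by $\rho\tau^2\omega^2\|\boldsymbol{u}\|_{H^1(D)^3}^2$ and the third by $\delta\,\|\boldsymbol{\mathcal{M}}^{\sqrt{\rho}\omega}\|\,C_{\mathrm{tr}}^2\,\|\boldsymbol{u}\|_{H^1(D)^3}^2$, both of which are $o(1)$ as $\omega,\delta\to0$ by Proposition \ref{DtNasy}; so for $\omega$ and $\delta$ small enough these are absorbed into $\tfrac{c}{2}\|\boldsymbol{u}\|_{H^1(D)^3}^2$, giving $\Re\, a_{\omega,\delta}(\boldsymbol{u},\boldsymbol{u})\ge\tfrac{c}{2}\|\boldsymbol{u}\|_{H^1(D)^3}^2$. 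The main obstacle I anticipate is the bookkeeping around the Lamé parameter sign in the coercivity of the elastic energy — ensuring $\lambda\int_D|\nabla\cdot\boldsymbol{u}|^2 + 2\mu\int_D|\mathcal{E}(\boldsymbol{u})|^2$ is genuinely bounded below by a positive multiple of $\int_D|\mathcal{E}(\boldsymbol{u})|^2$ using only $\mu>0$ and $3\lambda+2\mu>0$ rather than $\lambda\ge0$ — and, secondarily, making the Korn/compactness contradiction argument fully rigorous (this is where all the real content sits; the perturbation step is routine once the uniform bound on $\boldsymbol{\mathcal{M}}^{\sqrt{\rho}\omega}$ near $k=0$ is in hand).
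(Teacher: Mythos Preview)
Your proposal is correct and follows essentially the same approach as the paper: reduce to coercivity of $a_{0,0}$ by treating the $\omega$- and $\delta$-terms as small perturbations (using boundedness of $\boldsymbol{\mathcal{M}}^{\sqrt{\rho}\omega}$), bound the elastic energy from below by $\kappa_0\int_D|\mathcal{E}(\boldsymbol{u})|^2$ with $\kappa_0=\min\{2\mu,3\lambda+2\mu\}>0$, and then run the Korn/Rellich contradiction argument to show $\int_D|\mathcal{E}(\boldsymbol{u})|^2+\sum_i|\int_D\boldsymbol{u}\cdot\boldsymbol{\xi}_i|^2$ controls $\|\boldsymbol{u}\|_{H^1}^2$. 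The paper organizes the contradiction around the auxiliary inequality \eqref{add} rather than directly on $a_{0,0}(\boldsymbol{u}_n,\boldsymbol{u}_n)\to0$, but the substance is identical.
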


\begin{proof}
We claim that $a_{0,0}$ is bounded and coercive. By \eqref{DtN-NP}, the DtN operator $\boldsymbol{\mathcal{M}}^{\sqrt{\rho}\omega}:H^{\frac{1}{2}}(\partial D)^3\rightarrow H^{-\frac{1}{2}}(\partial D)^3$ is bounded due to the boundedness of $(\boldsymbol{\mathcal{S}}^{\sqrt{\rho}\omega}_{D})^{-1}$ and $\boldsymbol{\mathcal{K}}^{\sqrt{\rho}\omega,*}_{D}$ as $\omega\rightarrow0$. Therefore, by standard perturbation theory, it follows that $a_{\omega,\delta}$ remains bounded and coercive for sufficiently small $\delta$ and $\omega$. The remainder of the proof is to verify this claim.

First, we show that $a_{0,0}$ is bounded. By the Cauchy--Schwarz inequality, there exists a constant $C_1>0$, dependent on $\lambda$ and $\mu$, such that
\begin{align*}
|a_{0,0}(\boldsymbol{u},\boldsymbol{v})|&\leq \frac{\mu}{2}\|\nabla\boldsymbol{u}+\nabla\boldsymbol{u}^\top\|_{L^{2}(D)^{3\times3}}\|\nabla\boldsymbol{v}+\nabla\boldsymbol{v}^\top\|_{L^{2}(D)^{3\times3}}
\\ &\quad+|\lambda| \|\nabla\cdot\boldsymbol{u}\|_{L^{2}(D)}\|\nabla\cdot\boldsymbol{v}\|_{L^{2}(D)}+6\|\boldsymbol{u}\|_{L^2(D)^3}\|\boldsymbol{v}\|_{L^2(D)^3}
\\
&\leq C_{1}\|\boldsymbol{u}\|_{H^{1}(D)^{3}}\|\boldsymbol{v}\|_{H^{1}(D)^{3}},
\end{align*}
which demonstrates the boundedness of $a_{0,0}$.

Next, we show the coercivity of $a_{0,0}$. We rewrite $a_{0,0}(\boldsymbol{u},\boldsymbol{u})$ as
\begin{align*}
a_{0,0}(\boldsymbol{u},\boldsymbol{u})=2\mu\sum_{i,j=1}^3\|\mathcal{E}_{ij}(\boldsymbol{u})\|^2_{L^2(D)}+
\lambda\|\nabla\cdot\boldsymbol{u}\|^2_{L^2(D)}+\sum_{i=1}^6\Big|\int_{D}\boldsymbol{u}\cdot\boldsymbol{\xi}_i \mathrm{d}\boldsymbol{x}\Big|^2,
\end{align*}
where $\mathcal{E}_{ij}(\boldsymbol{u})$ is defined in \eqref{E:Eij}. Since $3\lambda+2\mu>0$ (cf. \eqref{convexity}), there exists a constant $\kappa_0:=\min\{2\mu, 3\lambda+2\mu\}>0$ such that
\begin{align}\label{coer}
2\mu\sum_{i,j=1}^3\|\mathcal{E}_{ij}(\boldsymbol{u})\|^2_{L^2(D)}+
\lambda\|\nabla\cdot\boldsymbol{u}\|^2_{L^2(D)}\geq \kappa_0\sum_{i,j=1}^3\|\mathcal{E}_{ij}(\boldsymbol{u})\|^2_{L^2(D)}.
\end{align}
In fact, if $\lambda\geq 0$, then \eqref{coer} holds with $\kappa_0=2\mu$.

If $-\frac{2}{3}\mu<\lambda<0$, then
\begin{align*}
&2\mu\sum_{i,j=1}^3\|\mathcal{E}_{ij}(\boldsymbol{u})\|^2_{L^2(D)}+
\lambda\|\nabla\cdot\boldsymbol{u}\|^2_{L^2(D)}\\
&\geq\int_D(2\mu+3\lambda)(|\partial_{x_1}u_1|^2+|\partial_{x_2}u_2|^2+|\partial_{x_3}u_3|^2)\mathrm{d}\boldsymbol{x}\\
&\quad +\mu\int_D(|\partial_{x_1}u_2+\partial_{x_1}u_2|^2+|\partial_{x_1}u_3+\partial_{x_3} u_1|^2+|\partial_{x_2}u_3+\partial_{x_3}u_2|^2)\mathrm{d}\boldsymbol{x}\nonumber\\
&\geq\left(3\lambda+2\mu\right)\sum_{i,j=1}^3\|\mathcal{E}_{ij}(\boldsymbol{u})\|^2_{L^2(D)}.
\end{align*}
Additionally, if there exists a constant $C_2>0$ such that for any $\boldsymbol{u}\in H^{1}(D)^{3}$,
\begin{align}\label{add}
C_2\bigg(\sum_{i,j=1}^3\|\mathcal{E}_{ij}(\boldsymbol{u})\|^2_{L^2(D)}+\frac{1}{\kappa_0}\sum_{i=1}^6\Big|\int_{D}\boldsymbol{u}\cdot\boldsymbol{\xi}_i \mathrm{d}\boldsymbol{x}\Big|^2\bigg)\geq\|\boldsymbol{u}\|^{2}_{H^{1}(D)^{3}}.
\end{align}
Combining the above inequality with \eqref{coer}, we obtain
\begin{align*}
a_{0,0}(\boldsymbol{u},\boldsymbol{u})\geq \kappa_0C^{-1}_2\|\boldsymbol{u}\|^{2}_{H^{1}(D)^{3}},
\end{align*}
which confirms the coercivity of $a_{0,0}$.

Let us now verify \eqref{add} by proceeding via contradiction. Suppose that for any constant $C>0$, there exists some $\boldsymbol{u}\in H^{1}(D)^{3}$ satisfying
\begin{align*}%\label{E:supp}
\sum_{i,j=1}^3\|\mathcal{E}_{ij}(\boldsymbol{u})\|^2_{L^2(D)}
+\frac{1}{\kappa_0}\sum_{i=1}^6\Big|\int_{D}\boldsymbol{u}\cdot\boldsymbol{\xi}_i \mathrm{d}\boldsymbol{x}\Big|^2<\frac{1}{C}\|\boldsymbol{u}\|^{2}_{H^{1}(D)^{3}}.
\end{align*}
Then, there exits a sequence $\{\boldsymbol{u}_n\}\in H^1(D)^3$ satisfying
$\|\boldsymbol{u}_n\|_{H^1(D)^3}=1$ and
\begin{align}\label{E:converse}
\sum_{i,j=1}^3\|\mathcal{E}_{ij}(\boldsymbol{u}_n)\|^2_{L^2(D)}
+\frac{1}{\kappa_0}\sum_{i=1}^6\Big|\int_{D}\boldsymbol{u}_n\cdot\boldsymbol{\xi}_i \mathrm{d}\boldsymbol{x}\Big|^2\rightarrow 0
\end{align}
as $n\to +\infty$.
Since $H^{1}(D)^{3}$ is compactly embedded into $L^2(D)^3$, there exits a subsequence $\{\boldsymbol{u}_{n_l}\}\subset\{\boldsymbol{u}_{n}\}$ such that $\{\boldsymbol{u}_{n_l}\}$ is a Cauchy sequence in $L^2(D)^3$. By Korn's inequality (cf. \cite[p.299]{Mclean2000strongly}), there exists a constant $C_3>0$ such that
\begin{align*}
\|\boldsymbol{u}_{n_l}\|^{2}_{H^{1}(D)^{3}}\leq C_3\bigg(\sum_{i,j=1}^3\|\mathcal{E}_{ij}(\boldsymbol{u}_{n_l})\|^2_{L^2(D)} +\|\boldsymbol{u}_{n_l}\|^2_{L^2(D)^3}\bigg).
\end{align*}
It follows that
\begin{align*}
\|\boldsymbol{u}_{n_l}-\boldsymbol{u}_{n_m}\|^{2}_{H^{1}(D)^{3}}
&\leq 2C_3\big(\sum_{i,j=1}^3\|\mathcal{E}_{ij}(\boldsymbol{u}_{n_l})\|^2_{L^2(D)}\\
&\quad +\sum_{i,j=1}^3\|\mathcal{E}_{ij}(\boldsymbol{u}_{n_m})\|^2_{L^2(D)}+
\|\boldsymbol{u}_{n_l}-\boldsymbol{u}_{n_m}\|^2_{L^2(D)^3}\big)\rightarrow 0
\end{align*}
as $l,m\rightarrow+\infty$.
Therefore, $\{\boldsymbol{u}_{n_l}\}$ is a Cauchy sequence in $H^1(D)^3$, and consequently,
\begin{align*}
\|\boldsymbol{u}_{n_l}-\boldsymbol{u}\|_{H^1(D)^3}\rightarrow 0, \quad l\rightarrow \infty,
\end{align*}
for some $\boldsymbol{u}=(u_i)_{i=1}^3$ and $\|\boldsymbol{u}\|_{H^1(D)^3}=1$. Using \eqref{E:converse}, we conclude that
\begin{align*}
\left\{ \begin{aligned}
&\frac{1}{2}(\partial_{x_i}u_j+\partial_{x_j}u_i)=0,\\
&\sum_{i=1}^6\Big|\int_{D}\boldsymbol{u}\cdot\boldsymbol{\xi}_i \mathrm{d}\boldsymbol{x}\Big|^2=0,
\end{aligned}\right.
 \end{align*}
which implies that the only solution is $\boldsymbol{u}=0$. However, this contradicts the fact that $\|\boldsymbol{u}\|_{H^1(D)^3}=1$. Therefore,  \eqref{add} holds and the proof is completed.
\end{proof}

\subsection{Characterization of subwavelength resonances}

This subsection is to establish the necessary and sufficient conditions for the existence and uniqueness of solutions to the interior problem \eqref{int:solu}. These conditions provide an explicit characterization of subwavelength resonances.

The definition of subwavelength resonance is provided below.

\begin{defi}\label{D:reson}
A subwavelength resonance of equation \eqref{int:solu} is defined as any $\omega\in\mathbb{C}$ that satisfies
$\omega(\delta)\rightarrow 0$ as $\delta\rightarrow 0^+$, and such that the following system
\begin{align}\label{int:reson}
\left\{ \begin{aligned}
&\mathcal{L}_{\lambda,\mu}\boldsymbol{u}(\omega,\delta)+\rho\tau^2\omega^2\boldsymbol{u}(\omega,\delta)=0&&\text{in } D,\\
&\partial_{\boldsymbol{\nu}}\boldsymbol{u}(\omega,\delta)=\delta\boldsymbol{\mathcal{M}}^{\sqrt{\rho}\omega}\left[\boldsymbol{u}(\omega,\delta)\right]&&\text{on }\partial D
\end{aligned}\right.
\end{align}
admits a nontrivial solution $\boldsymbol{u}(\omega,\delta)\in H^1(D)^3$.
\end{defi}

\begin{rema}\label{resonance}
By taking the complex conjugate on both sides of \eqref{int:reson}, and noting that  $\overline{\boldsymbol{\mathcal{M}}^{\sqrt{\rho}\omega}[\boldsymbol{f}]} =\boldsymbol{\mathcal{M}}^{-\sqrt{\rho}\overline{\omega}}[\overline{\boldsymbol{f}}]$,
it follows that $-\overline {\omega}$ is also a resonant frequency with identical multiplicities in the case of
$\omega$ being a subwavelength resonant frequency.
 Moreover, the corresponding nontrivial solution is $\overline{\boldsymbol{u}}(\omega,\delta)$.
\end{rema}

If subwavelength resonance occurs, we employ the notation $\omega^{\pm}$ to denote the resonant frequencies, where $\Re(\omega^{+})>0$ and $\omega^{-}=-\overline{\omega^+}$.

By taking $\delta=0$ in \eqref{int:reson}, we obtain a resonant frequency $\omega=0$ corresponding to the function $u(0,0):=\chi_D\boldsymbol{\xi}_i$, where $\boldsymbol{\xi}_i$ is a solution of \eqref{Lame:traction 0} and $\chi_D$ denotes the characteristic function of the set $D$, which is defined as
\begin{align*}
\chi_D=\left\{ \begin{aligned}
&0,&&\boldsymbol x\in\mathbb{R}^3\backslash \overline D,\\
&1,&&\boldsymbol x\in D.
\end{aligned}\right.
\end{align*}
Thus, it suffices to study the perturbation and the splitting of the nonlinear eigenvalue $\omega=0$ with $\delta=0$ in order to characterize the resonances.

Now, we estimate the leading terms of the resonant frequencies from the following variational formulation:
\begin{align}\label{reson:leading equa}
&\lambda\int_{D}(\nabla\cdot\boldsymbol{u})(\nabla\cdot\overline{\boldsymbol{v}})\mathrm{d}\boldsymbol{x}
+\frac{\mu}{2}\int_{D}(\nabla\boldsymbol{u}+\nabla\boldsymbol{u}^\top):(\nabla\overline{\boldsymbol{v}}+\nabla\overline{\boldsymbol{v}}^\top)
\mathrm{d}\boldsymbol{x}-\rho\tau^2\omega^2\int_D\boldsymbol{u}\cdot\overline{\boldsymbol{v}}\mathrm{d}\boldsymbol{x}\nonumber\\
&=\delta\int_{\partial D}\boldsymbol{\mathcal{M}}^{\sqrt{\rho}\omega}[\boldsymbol{u}]\cdot\overline{\boldsymbol{v}}\mathrm{d}\sigma(\boldsymbol{x})
\end{align}
for all $\boldsymbol{v}\in H^{1}(D)^3$, which corresponds to  \eqref{int:reson}.

Since $\boldsymbol{\xi}_i,i=1,\cdots,6,$ are solutions of {\eqref{Lame:traction 0}}, it follows from integration by parts that
\begin{align*}%\label{E(u,v)=0}
&\lambda\int_{D}(\nabla\cdot\boldsymbol{\xi}_i)(\nabla\cdot\boldsymbol{\xi}_j)\mathrm{d}\boldsymbol{x}
+\frac{\mu}{2}\int_{D}(\nabla\boldsymbol{\xi}_i+\nabla\boldsymbol{\xi}_i^\top):(\nabla\boldsymbol{\xi}_j+\nabla\boldsymbol{\xi}_j^\top) \mathrm{d}\boldsymbol{x}\nonumber\\
&=-\int_D\mathcal{L}_{\lambda,\mu}\boldsymbol{\xi}_i\cdot\boldsymbol{\xi}_j\mathrm{d}\boldsymbol{x}+\int_{\partial D}\partial_{\boldsymbol{\nu}} \boldsymbol{\xi}_i\cdot\boldsymbol{\xi}_j\mathrm{d}\sigma(\boldsymbol{x})=0.
\end{align*}
Thus, if  taking $\boldsymbol{v}=\boldsymbol{\xi}_j$, $\boldsymbol{u}(\omega,\delta)\approx\boldsymbol{\xi}_i$
in \eqref{reson:leading equa},  and  applying Proposition \ref{DtNasy}, we can  obtain
\begin{align*}
-\rho\tau^2\omega^2\int_D\boldsymbol{\xi}_i\cdot\boldsymbol{\xi}_j\mathrm{d}\boldsymbol{x}\approx\delta\int_{\partial D} \boldsymbol{\mathcal{M}}[\boldsymbol{\xi}_i]\cdot\boldsymbol{\xi}_j\mathrm{d}\sigma(\boldsymbol{x}),\quad i,j=1,\cdots,6.
\end{align*}
Due to the orthogonality of $\{\boldsymbol{\xi}_i\}_{i=1}^6$, it follows that
\begin{align*}
-\rho\tau^2\omega^2\boldsymbol{\mathscr{I}}+\delta \boldsymbol{\mathscr{Q}}\approx 0,
\end{align*}
where $\boldsymbol{\mathscr{I}}$ is the $6\times 6$ identity matrix and $\boldsymbol{\mathscr{Q}}$ is defined by \eqref{matrix:QR}.

Denote by $\lambda_i$ and $\boldsymbol{\mathscr{V}}_i=(\mathscr{V}_{ij})_{j=1}^6$ the eigenvalues and the corresponding eigenvectors of $\boldsymbol{\mathscr{Q}}$, respectively, with
\begin{align}\label{eigen:v}
\boldsymbol{\mathscr{V}}_i^{\top}\boldsymbol{\mathscr{V}}_i=1,\quad \boldsymbol{\mathscr{V}}_i^{\top}\boldsymbol{\mathscr{V}}_j=0,
\quad i\neq j.
\end{align}
Define a $6\times6$ unitary matrix $\boldsymbol{\mathscr{V}}$ by
\begin{align*}%\label{matrix:V}
\boldsymbol{\mathscr{V}}:=(\boldsymbol{\mathscr{V}}_{1},\cdots,\boldsymbol{\mathscr{V}}_6).
\end{align*}
Since $\boldsymbol{\mathscr{Q}}$ is a real-valued symmetric matrix (cf. Lemma \ref{Q:symmetric}), we derive
\begin{align*}
\boldsymbol{\mathscr{V}}^{\top}(-\rho\tau^2\omega^2\boldsymbol{\mathscr{I}}+\delta \boldsymbol{\mathscr{Q}})\boldsymbol{\mathscr{V}}=\mathrm{diag}(-\rho\tau^2\omega^2+\delta\lambda_i)^6_{i=1}\approx0,
\end{align*}
which leads to
\begin{align*}
\omega^{\pm}_i(\delta^{\frac12})\approx\pm\sqrt{\frac{\delta\lambda_i}{\rho\tau^2}}=\pm\sqrt{\frac{\epsilon\lambda_i}{\rho}},\quad i=1,\cdots,6.
\end{align*}

When no resonance occurs, we aim to investigate the well-posedness of the solution to \eqref{int:solu}. In fact, solving \eqref{int:solu} is equivalent to finding a weak solution to the associated variational problem \eqref{int:varia}. Our objective is to provide a necessary and sufficient condition for the existence and uniqueness of the weak solution to this variational problem. Before proceeding, we introduce some auxiliary definitions and preliminary results to facilitate the analysis.

By applying Lemma \ref{lemma coer} and the Lax--Milgram theorem, for each $i$, there exists a unique solution $\boldsymbol{w}_i(\omega,\delta)$ such that for $i=1,\cdots,6$,
\begin{align}\label{wi}%\tag{$V_i$}
a_{\omega,\delta}(\boldsymbol{w}_i(\omega,\delta),\boldsymbol{v})=\int_D\overline{\boldsymbol{v}}\cdot \boldsymbol{\xi}_{i} \mathrm{d}\boldsymbol{x}.
\end{align}
Similarly, there exits a unique solution $\boldsymbol{w_f}(\omega,\delta)$ satisfying
\begin{align}\label{wf}%\tag{$V_f$}
a_{\omega,\delta}(\boldsymbol{w}_{\boldsymbol f}(\omega,\delta),\boldsymbol{v})=\delta\int_{\partial D}\big(\partial_{\boldsymbol{\nu}} \boldsymbol{u}^{\mathrm{in}}-\boldsymbol{\mathcal{M}}^{\sqrt{\rho}\omega}
[\boldsymbol{u}^{\mathrm{in}}]\big)\cdot\overline{\boldsymbol{v}}\mathrm{d}\sigma(\boldsymbol{x}).
\end{align}
Here, $\boldsymbol{w}_i(\omega,\delta)$ and $\boldsymbol{w}_{\boldsymbol f}(\omega,\delta)$ are analytic in both $\omega$ and $\delta$. Define two $6$-component vectors and a $6\times6$ matrix as follows:
\begin{align}
\boldsymbol{s}(\omega,\delta)&:=(s_i(\omega,\delta))^6_{i=1},\quad s_i(\omega,\delta)=\int_D\boldsymbol{u}(\omega,\delta)\cdot \boldsymbol{\xi}_{i}\mathrm{d}\boldsymbol{x},\label{eq:psi}\\
\boldsymbol{g}(\omega,\delta)&:=(g_i(\omega,\delta))^6_{i=1}, \quad  g_i(\omega,\delta)=\int_D \boldsymbol{w}_{\boldsymbol f}(\omega,\delta)\cdot \boldsymbol{\xi}_i\mathrm{d}\boldsymbol{x},\label{eq:g}\\
\boldsymbol{\mathscr{A}}(\omega,\delta)&:=(\mathscr{A}_{ij}(\omega,\delta))^6_{i,j=1},\quad \mathscr{A}_{ij}(\omega,\delta)=\int_D \boldsymbol{w}_j(\omega,\delta)\cdot \boldsymbol{\xi}_i\mathrm{d}\boldsymbol{x}.\label{eq:b}
\end{align}
It is clear to note that $\boldsymbol{g}(\omega,\delta)$ and $\boldsymbol{\mathscr{A}}(\omega,\delta)$ are uniquely determined by $\boldsymbol{w}_{\boldsymbol f}(\omega,\delta)$ and $\boldsymbol{w}_j(\omega,\delta)$, respectively.

\begin{lemm}%\label{well posed}
Let $\omega\in\mathbb{C}$ and $\delta\in\mathbb{R}^+$ be in a neighborhood of zero. The variational problem \eqref{int:varia} has a unique solution $\boldsymbol{u}(\omega,\delta)\in H^{1}(D)^3$ if the following condition holds:
\begin{align}\label{E:unique}
\det\left(\boldsymbol{\mathscr{I}}-\boldsymbol{\mathscr{A}}(\omega,\delta)\right)\neq0.
\end{align}
Moreover, under the condition \eqref{E:unique}, the unique solution $\boldsymbol{u}(\omega,\delta)$ of the variational problem \eqref{int:varia} can be expressed as
\begin{align}\label{solu:express}
\boldsymbol{u}(\omega,\delta)=\sum^{6}_{j=1}s_{j}(\omega,\delta)\boldsymbol{w}_j(\omega,\delta)
+\boldsymbol{w}_{\boldsymbol f}(\omega,\delta),
\end{align}
where $\boldsymbol{w}_j(\omega,\delta)$ and $\boldsymbol{w}_{\boldsymbol f}(\omega,\delta)$ are the unique solutions to the variational problems \eqref{wi} and \eqref{wf}, respectively, and $\boldsymbol{s}(\omega,\delta)$ satisfies the following system:
\begin{align}\label{matrix}
\left(\boldsymbol{\mathscr{I}}-\boldsymbol{\mathscr{A}}(\omega,\delta)\right)\boldsymbol{s}(\omega,\delta)=\boldsymbol{g}(\omega,\delta).
\end{align}
\end{lemm}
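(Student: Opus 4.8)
The plan is to express any solution $\boldsymbol{u}(\omega,\delta)$ of the variational problem \eqref{int:varia} in terms of the building blocks $\boldsymbol{w}_j$ and $\boldsymbol{w}_{\boldsymbol f}$ defined by \eqref{wi} and \eqref{wf}, and then to reduce the infinite-dimensional problem to the $6\times 6$ linear system \eqref{matrix}. First I would observe that \eqref{int:varia} can be rewritten using the auxiliary form $a_{\omega,\delta}$: since $a_{\omega,\delta}(\boldsymbol{u},\boldsymbol{v})$ differs from the left-hand side of \eqref{int:varia} only by the added term $\sum_{i=1}^6 (\int_D \boldsymbol{u}\cdot\boldsymbol{\xi}_i\,\mathrm d\boldsymbol x)(\int_D \overline{\boldsymbol{v}}\cdot\boldsymbol{\xi}_i\,\mathrm d\boldsymbol x)$, a function $\boldsymbol{u}$ solves \eqref{int:varia} if and only if
\begin{align*}
a_{\omega,\delta}(\boldsymbol{u},\boldsymbol{v}) = \sum_{i=1}^6 s_i(\omega,\delta)\int_D \overline{\boldsymbol{v}}\cdot\boldsymbol{\xi}_i\,\mathrm d\boldsymbol x + \delta\int_{\partial D}\big(\partial_{\boldsymbol{\nu}}\boldsymbol{u}^{\mathrm{in}} - \boldsymbol{\mathcal{M}}^{\sqrt{\rho}\omega}[\boldsymbol{u}^{\mathrm{in}}]\big)\cdot\overline{\boldsymbol{v}}\,\mathrm d\sigma(\boldsymbol x)
\end{align*}
for all $\boldsymbol{v}\in H^1(D)^3$, where $s_i(\omega,\delta) = \int_D \boldsymbol{u}(\omega,\delta)\cdot\boldsymbol{\xi}_i\,\mathrm d\boldsymbol x$ as in \eqref{eq:psi}. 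The point is that the right-hand side is a bounded antilinear functional of $\boldsymbol{v}$, so by Lemma \ref{lemma coer} and Lax--Milgram it has a unique $H^1(D)^3$ representative, namely $\sum_{j=1}^6 s_j(\omega,\delta)\boldsymbol{w}_j(\omega,\delta) + \boldsymbol{w}_{\boldsymbol f}(\omega,\delta)$ by linearity and the defining relations \eqref{wi}, \eqref{wf}. This already yields the representation \eqref{solu:express}, but only as a tautology in terms of the still-unknown coefficients $s_j$.

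Next I would close the system by testing the identity $s_i(\omega,\delta) = \int_D \boldsymbol{u}(\omega,\delta)\cdot\boldsymbol{\xi}_i\,\mathrm d\boldsymbol x$ against the representation \eqref{solu:express}: pairing both sides with $\boldsymbol{\xi}_i$ in $L^2(D)^3$ gives
\begin{align*}
s_i(\omega,\delta) = \sum_{j=1}^6 s_j(\omega,\delta)\int_D \boldsymbol{w}_j(\omega,\delta)\cdot\boldsymbol{\xi}_i\,\mathrm d\boldsymbol x + \int_D \boldsymbol{w}_{\boldsymbol f}(\omega,\delta)\cdot\boldsymbol{\xi}_i\,\mathrm d\boldsymbol x = \sum_{j=1}^6 \mathscr{A}_{ij}(\omega,\delta)\,s_j(\omega,\delta) + g_i(\omega,\delta),
\end{align*}
using the definitions \eqref{eq:g} and \eqref{eq:b}. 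In vector form this is precisely $(\boldsymbol{\mathscr{I}} - \boldsymbol{\mathscr{A}}(\omega,\delta))\boldsymbol{s}(\omega,\delta) = \boldsymbol{g}(\omega,\delta)$, i.e.\ \eqref{matrix}. Under the hypothesis \eqref{E:unique} the matrix $\boldsymbol{\mathscr{I}} - \boldsymbol{\mathscr{A}}(\omega,\delta)$ is invertible, so $\boldsymbol{s}(\omega,\delta)$ is uniquely determined, and then \eqref{solu:express} gives a well-defined candidate solution.

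It remains to verify that this candidate genuinely solves \eqref{int:varia} and that it is the only solution. For existence, I would substitute $\boldsymbol{u} := \sum_j s_j \boldsymbol{w}_j + \boldsymbol{w}_{\boldsymbol f}$ with $\boldsymbol{s} = (\boldsymbol{\mathscr{I}} - \boldsymbol{\mathscr{A}})^{-1}\boldsymbol{g}$ back into $a_{\omega,\delta}(\boldsymbol{u},\boldsymbol{v})$; by linearity and \eqref{wi}, \eqref{wf} this equals $\sum_i s_i \int_D \overline{\boldsymbol{v}}\cdot\boldsymbol{\xi}_i\,\mathrm d\boldsymbol x$ plus the boundary functional, and one checks that $s_i = \int_D \boldsymbol{u}\cdot\boldsymbol{\xi}_i\,\mathrm d\boldsymbol x$ holds automatically because $\boldsymbol{s}$ solves \eqref{matrix}; subtracting the added auxiliary term then recovers exactly \eqref{int:varia}. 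For uniqueness, if $\boldsymbol{u}$ and $\boldsymbol{u}'$ both solve \eqref{int:varia}, the first paragraph shows each must have the form \eqref{solu:express} with its own coefficient vector, and each coefficient vector solves \eqref{matrix}; since \eqref{E:unique} makes that system uniquely solvable, the coefficient vectors coincide, hence so do $\boldsymbol{u}$ and $\boldsymbol{u}'$. The only real subtlety — and the step I expect to require the most care — is the bookkeeping in the ``if and only if'' of the first paragraph: one must be careful that every $H^1(D)^3$ solution of \eqref{int:varia} (not merely those a priori of the form \eqref{solu:express}) does satisfy the rewritten $a_{\omega,\delta}$-identity with the \emph{correct} scalars $s_i$ read off from $\boldsymbol{u}$ itself, so that the reduction to \eqref{matrix} captures all solutions and no spurious ones; the coercivity of $a_{\omega,\delta}$ from Lemma \ref{lemma coer} is what guarantees the Lax--Milgram representatives are unique and hence that this correspondence is faithful.
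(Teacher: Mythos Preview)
Your proposal is correct and follows essentially the same approach as the paper's proof: rewrite \eqref{int:varia} using the auxiliary form $a_{\omega,\delta}$, invoke Lax--Milgram (via Lemma \ref{lemma coer}) together with the defining relations \eqref{wi} and \eqref{wf} to obtain the representation \eqref{solu:express}, and then pair with the $\boldsymbol{\xi}_i$ to reduce to the $6\times6$ system \eqref{matrix}. Your treatment of the existence direction and the ``if and only if'' bookkeeping is somewhat more explicit than the paper's, but there is no substantive difference in method.
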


\begin{proof}
It is clear that the variational problem \eqref{int:varia} is equivalent to
\begin{align*}%\label{variation1}
a_{\omega,\delta}(\boldsymbol{u},\boldsymbol{v})-\sum_{i=1}^6\int_{D}\boldsymbol{u}
\cdot\boldsymbol{\xi}_i\mathrm{d}\boldsymbol{x}\int_{D}\overline{\boldsymbol{v}}\cdot\boldsymbol{\xi}_i
\mathrm{d}\boldsymbol{x}
=\delta\int_{\partial D}\big(\partial_{\boldsymbol{\nu}}\boldsymbol{u}^{\mathrm{in}}-\boldsymbol{\mathcal{M}}^{\sqrt{\rho}\omega}
[\boldsymbol{u}^{\mathrm{in}}]\big)\cdot\overline{\boldsymbol{v}}\mathrm{d}\sigma(\boldsymbol{x}).
\end{align*}
Using \eqref{wi} and \eqref{wf} yields
\begin{align*}
a_{\omega,\delta}(\boldsymbol{u}(\omega,\delta),\boldsymbol{v})-\sum_{i=1}^6\Big(\int_{D}\boldsymbol{u}(\omega,\delta)
\cdot\boldsymbol{\xi}_i\mathrm{d}\boldsymbol{x}\Big)a_{\omega,\delta}(\boldsymbol{w}_i(\omega,\delta),\boldsymbol{v})
=a_{\omega,\delta}(\boldsymbol{w}_{\boldsymbol f}(\omega,\delta),\boldsymbol{v}),
\end{align*}
which leads to
\begin{align*}
\boldsymbol{u}(\omega,\delta)-\sum_{i=1}^6\Big(\int_D\boldsymbol{u}(\omega,\delta)\cdot \boldsymbol{\xi}_{i} \mathrm{d}\boldsymbol{x}\Big)\boldsymbol{w}_i(\omega,\delta)=\boldsymbol{w}_{\boldsymbol f}(\omega,\delta).
\end{align*}
Thus, we derive
\begin{align*}
\int_D\boldsymbol{u}(\omega,\delta)\cdot \boldsymbol{\xi}_{i} \mathrm{d}\boldsymbol{x}
-\sum^{6}_{j=1}\int_D\boldsymbol{w}_j(\omega,\delta)\cdot \boldsymbol{\xi}_{i} \mathrm{d}\boldsymbol{x}\int_D\boldsymbol{u}(\omega,\delta)\cdot \boldsymbol{\xi}_{j} \mathrm{d}\boldsymbol{x}\\
=\int_D\boldsymbol{w}_{\boldsymbol f}(\omega,\delta)\cdot \boldsymbol{\xi}_{i} \mathrm{d}\boldsymbol{x},\quad i=1,\cdots,6,
\end{align*}
which shows that $\boldsymbol{s}(\omega,\delta)$ satisfies \eqref{matrix}. Consequently, the variational problem \eqref{int:varia} has a unique solution $\boldsymbol{u}(\omega,\delta)$  given by \eqref{solu:express} if and only if the condition \eqref{E:unique} holds.
\end{proof}

According to  Definition \ref{D:reson}, the variational problem \eqref{wf} admits a unique solution $\boldsymbol{w}_{\boldsymbol f}(\omega,\delta)=0$, when subwavelength resonance occurs, implying that $\boldsymbol{g}=0$. Therefore,
the resonant frequencies, denoted as $\omega = \omega(\delta^{\frac{1}{2}}) \in \mathbb{C}$, are characterized by the condition
\begin{align*}%\label{E:resonance}
\det\left(\boldsymbol{\mathscr{I}}-\boldsymbol{\mathscr{A}}(\omega,\delta)\right)=0.
\end{align*}
In other words, the resonant frequency $\omega$ is the characteristic value of the operator  $\boldsymbol{\mathscr{I}}-\boldsymbol{\mathscr{A}}(\omega,\delta)$, where the definition of the characteristic value of an analytic operator-valued function can be found in \cite[p.9]{Lay}.

Based on the relationship \eqref{eq:b} between $\boldsymbol{\mathscr{A}}(\omega,\delta)$ and $\boldsymbol{w}_i(\omega,\delta)$, the following proposition  provides the asymptotic expansions of $\boldsymbol{w}_i(\omega,\delta),i=1,\cdots,6,$ with respect to $\omega$ and $\delta$.

\begin{prop}\label{wi:expansion}
Let $\omega\in\mathbb{C}$ and  $\delta\in\mathbb{R}^+$. For each $i=1,\cdots,6$, the solution $\boldsymbol{w}_i(\omega,\delta)$ of \eqref{wi} admits the asymptotic expansion as $\omega\rightarrow0$ and $\delta\rightarrow 0$:
\begin{align}\label{wjasy}
\boldsymbol{w}_i(\omega,\delta)&=\chi_D\boldsymbol{\xi}_i+\omega^2\rho\tau^2\chi_D\boldsymbol{\xi}_i-
\delta\chi_D\sum_{k=1}^6\mathscr{Q}_{ik}\boldsymbol{\xi}_k+\delta\widetilde{\boldsymbol{w}}_{i;0,1}
+\mathrm{i}\omega\delta\sqrt{\rho}\gamma\chi_D\sum_{k=1}^6\mathscr{R}_{ik}\boldsymbol{\xi}_k\nonumber\\
&\quad+\mathrm{i}\omega\delta\sqrt{\rho}\gamma\widetilde{\boldsymbol{w}}_{i;1,1}
+\mathcal{O}\big((\omega^2+\delta\big)^2),
\end{align}
where $\gamma,\mathscr{Q}_{ik}$, and $\mathscr{R}_{ik}$ are defined in \eqref{ri}, \eqref{eq:qij}, and \eqref{eq:Rij}, respectively. Moreover, the function $\widetilde{\boldsymbol{w}}_{i;0,1}$ satisfies the following elastic problem:
\begin{align}\label{wj01}
\left\{ \begin{aligned}
&-\mathcal{L}_{\lambda,\mu}\widetilde{\boldsymbol{w}}_{i;0,1}=\sum^6_{k=1}\mathscr{Q}_{ik}\boldsymbol{\xi}_k&& \text{\rm in } D,\\
&\partial_{\boldsymbol{\nu}}\widetilde{\boldsymbol{w}}_{i;0,1}=\boldsymbol{\mathcal{M}}[\boldsymbol{\xi}_i]&& \text{\rm on } \partial D,\\
&\int_D\widetilde{\boldsymbol{w}}_{i;0,1}\cdot\boldsymbol{\xi}_k\mathrm{d}\boldsymbol{x}=0,&& k=1,\cdots,6
\end{aligned}\right.
\end{align}
and the function $\widetilde{\boldsymbol{w}}_{i;1,1}$ satisfies the following elastic problem:
\begin{align}\label{E:wj11}
\left\{ \begin{aligned}
&-\mathcal{L}_{\lambda,\mu}\widetilde{\boldsymbol{w}}_{i;1,1}+\sum^6_{k=1}\mathscr{R}_{ik}\boldsymbol{\xi}_k=0&& \text{\rm in } D,\\
&\partial_{\boldsymbol{\nu}}\widetilde{\boldsymbol{w}}_{i;1,1}=-\sum_{n=1}^3c_{ni}\boldsymbol{\mathcal{M}}[\mathbf{e}_n]&& \text{\rm on } \partial D,\\
&\int_D\widetilde{\boldsymbol{w}}_{i;1,1}\cdot\boldsymbol{\xi}_k\mathrm{d}\boldsymbol{x}=0,&& k=1,\cdots,6,
\end{aligned}\right.
\end{align}
where  $c_{ni}$ are defined by \eqref{eq:pij}.
\end{prop}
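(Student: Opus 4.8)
The plan is to substitute a formal asymptotic ansatz for $\boldsymbol{w}_i(\omega,\delta)$ into the variational identity \eqref{wi}, exploit the known expansions of $\boldsymbol{\mathcal{M}}^{\sqrt{\rho}\omega}$ from Proposition \ref{DtNasy} and of $a_{\omega,\delta}$ as an analytic perturbation of $a_{0,0}$, and then match powers of $\omega$ and $\delta$. Since Lemma \ref{lemma coer} guarantees that $a_{\omega,\delta}$ is bounded and coercive uniformly for small $(\omega,\delta)$, the Lax--Milgram theorem yields that $\boldsymbol{w}_i(\omega,\delta)$ is analytic in $(\omega,\delta^{1/2})$ near the origin (in fact jointly analytic in $\omega$ and $\delta$, since the perturbing terms $-\rho\tau^2\omega^2\int_D\boldsymbol u\cdot\overline{\boldsymbol v}$ and $-\delta\int_{\partial D}\boldsymbol{\mathcal{M}}^{\sqrt\rho\omega}[\boldsymbol u]\cdot\overline{\boldsymbol v}$ are analytic in these variables), so the expansion \eqref{wjasy} is legitimate and its coefficients are determined order by order. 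The natural bookkeeping is in terms of the three ``small'' monomials $\omega^2$, $\delta$, and $\omega\delta$; the error term $\mathcal O((\omega^2+\delta)^2)$ collects $\omega^4$, $\omega^2\delta$, $\delta^2$ and higher.

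First I would record the zeroth order: taking $\omega=\delta=0$ in \eqref{wi} gives $a_{0,0}(\boldsymbol{w}_i(0,0),\boldsymbol v)=\int_D\overline{\boldsymbol v}\cdot\boldsymbol\xi_i$ for all $\boldsymbol v$. Testing against $\boldsymbol v\in\Xi$ and using that $a_{0,0}$ restricted to $\Xi$ is just $\sum_k(\cdot,\boldsymbol\xi_k)(\boldsymbol\xi_k,\cdot)$ by the orthonormality, while testing against $\boldsymbol v\perp\Xi$ (in the strain-energy sense) forces the strain of $\boldsymbol{w}_i(0,0)$ to vanish, one identifies $\boldsymbol{w}_i(0,0)=\chi_D\boldsymbol\xi_i$; indeed $\boldsymbol\xi_i$ solves \eqref{Lame:traction 0} so $a_{0,0}(\chi_D\boldsymbol\xi_i,\boldsymbol v)=\sum_k(\boldsymbol\xi_i,\boldsymbol\xi_k)(\boldsymbol\xi_k,\overline{\boldsymbol v})=(\boldsymbol\xi_i,\overline{\boldsymbol v})$. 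Next, differentiating \eqref{wi} in the direction of each monomial: the $\omega^2$-term of $a_{\omega,\delta}$ is $-\rho\tau^2\int_D\boldsymbol u\cdot\overline{\boldsymbol v}$, so the $\omega^2$-coefficient $\boldsymbol w_{i;2,0}$ of $\boldsymbol{w}_i$ obeys $a_{0,0}(\boldsymbol w_{i;2,0},\boldsymbol v)=\rho\tau^2\int_D\chi_D\boldsymbol\xi_i\cdot\overline{\boldsymbol v}$, whose solution (again by the $\Xi$/$\Xi^\perp$ split) is $\rho\tau^2\chi_D\boldsymbol\xi_i$. For the $\delta$-coefficient, the $\delta$-term of $a_{\omega,\delta}$ at $\omega=0$ is $-\int_{\partial D}\boldsymbol{\mathcal M}[\boldsymbol u]\cdot\overline{\boldsymbol v}$, giving $a_{0,0}(\boldsymbol w_{i;0,1},\boldsymbol v)=\int_{\partial D}\boldsymbol{\mathcal M}[\chi_D\boldsymbol\xi_i]\cdot\overline{\boldsymbol v}$; decomposing the right side onto $\Xi$ via the definition \eqref{eq:qij} of $\mathscr Q_{ik}$ and into its $\Xi^\perp$ remainder yields the stated $-\chi_D\sum_k\mathscr Q_{ik}\boldsymbol\xi_k+\widetilde{\boldsymbol w}_{i;0,1}$, where $\widetilde{\boldsymbol w}_{i;0,1}$ — having zero $\Xi$-components by construction — is the unique solution of \eqref{wj01}; the boundary condition $\partial_{\boldsymbol\nu}\widetilde{\boldsymbol w}_{i;0,1}=\boldsymbol{\mathcal M}[\boldsymbol\xi_i]$ and the interior equation with source $\sum_k\mathscr Q_{ik}\boldsymbol\xi_k$ are exactly the strong form of this variational problem (the source on the left absorbs the $\Xi$-projection that was split off, using the compatibility that makes the Neumann problem solvable). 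Finally the $\omega\delta$-coefficient picks up the $\mathrm i\omega\delta$-part of $-\delta\int_{\partial D}\boldsymbol{\mathcal M}^{\sqrt\rho\omega}[\boldsymbol u]\cdot\overline{\boldsymbol v}$, i.e. $-\sqrt\rho\int_{\partial D}\boldsymbol{\mathcal M}_1[\chi_D\boldsymbol\xi_i]\cdot\overline{\boldsymbol v}$ with $\boldsymbol{\mathcal M}_1$ from \eqref{eq:M1}; unwinding $\boldsymbol{\mathcal M}_1[\boldsymbol\xi_i]=\mathrm i\gamma\sum_n(\int_{\partial D}\boldsymbol{\mathcal M}[\mathbf e_n]\cdot\boldsymbol\xi_i)\,\boldsymbol{\mathcal M}[\mathbf e_n]=-\mathrm i\gamma\sum_n c_{ni}\boldsymbol{\mathcal M}[\mathbf e_n]$ via \eqref{eq:pij}, and projecting onto $\Xi$ (which produces $\sum_n c_{ni}c_{nk}=\mathscr R_{ik}$ by \eqref{eq:Rij}) versus $\Xi^\perp$, gives the term $\mathrm i\omega\delta\sqrt\rho\gamma(\chi_D\sum_k\mathscr R_{ik}\boldsymbol\xi_k+\widetilde{\boldsymbol w}_{i;1,1})$ with $\widetilde{\boldsymbol w}_{i;1,1}$ characterized by \eqref{E:wj11}.

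The step I expect to be the main obstacle is the clean separation, at each order, of the right-hand side into its $\Xi$-component (which is what feeds back into the matrix $\boldsymbol{\mathscr A}$ and ultimately into the resonance condition) and its complement, together with verifying that the non-$\Xi$ remainders $\widetilde{\boldsymbol w}_{i;0,1}$, $\widetilde{\boldsymbol w}_{i;1,1}$ are genuinely characterized by the Neumann problems \eqref{wj01} and \eqref{E:wj11}. This requires a small lemma: a function $\boldsymbol w\in H^1(D)^3$ satisfies $a_{0,0}(\boldsymbol w,\boldsymbol v)=\int_{\partial D}\boldsymbol{g}\cdot\overline{\boldsymbol v}\,\mathrm d\sigma + \int_D\boldsymbol h\cdot\overline{\boldsymbol v}\,\mathrm d\boldsymbol x$ for all $\boldsymbol v$, with $\int_D\boldsymbol w\cdot\boldsymbol\xi_k=0$, if and only if $-\mathcal L_{\lambda,\mu}\boldsymbol w=\boldsymbol h - \sum_k(\int_D\boldsymbol h\cdot\boldsymbol\xi_k + \int_{\partial D}\boldsymbol g\cdot\boldsymbol\xi_k)\boldsymbol\xi_k$ in $D$, $\partial_{\boldsymbol\nu}\boldsymbol w=\boldsymbol g$ on $\partial D$, plus the six orthogonality constraints — the subtracted $\Xi$-sum being precisely the Fredholm compatibility correction that makes the pure-Neumann problem solvable while the constraints pin down the otherwise free $\Xi$-part. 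Once this translation lemma is in hand, matching \eqref{wjasy} term by term against \eqref{wi} and reading off \eqref{wj01}, \eqref{E:wj11} is bookkeeping; the uniform coercivity of Lemma \ref{lemma coer} controls the $\mathcal O((\omega^2+\delta)^2)$ remainder by the standard estimate $\|\boldsymbol w_i(\omega,\delta)-(\text{partial sum})\|_{H^1(D)^3}\lesssim \|(\text{residual functional})\|_{H^{-1}}$.
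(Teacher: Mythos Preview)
Your proposal is correct and follows essentially the same approach as the paper: power-series expansion of $\boldsymbol{w}_i(\omega,\delta)$, order-by-order matching, and at each order splitting off the $\Xi$-projection (which yields the $\mathscr{Q}_{ik}$ and $\mathscr{R}_{ik}$ coefficients) from the $\Xi$-orthogonal remainder (which gives the Neumann problems \eqref{wj01} and \eqref{E:wj11}). The only cosmetic difference is that the paper first passes to the strong form \eqref{wjPDE} and matches coefficients in the PDE system, whereas you stay with the variational identity throughout; your ``translation lemma'' is precisely the step the paper performs when it tests the strong form against $\boldsymbol{\xi}_k$ to obtain \eqref{eq:w01}.
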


\begin{proof}
By combining  the definition \eqref{a(u,v)} of $a_{\omega,\delta}({\boldsymbol{u},\boldsymbol{v}})$ with the  variational problem \eqref{wi}, we derive the following strong form:
\begin{align}\label{wjPDE}
\left\{ \begin{aligned}
&-\mathcal{L}_{\lambda,\mu}\boldsymbol{w}_{i}(\omega,\delta)+\sum^{6}_{i=1}\bigg(\int_D\boldsymbol{w}_i(\omega,\delta)\cdot \boldsymbol{\xi}_j\mathrm{d}\boldsymbol{x}\bigg)\boldsymbol{\xi}_j-\rho\tau^2\omega^2\boldsymbol{w}_i(\omega,\delta)
=\boldsymbol{\xi}_i&& \text{in } D,\\
&\partial_{\boldsymbol{\nu}}\boldsymbol{w}_i(\omega,\delta)=\delta\boldsymbol{\mathcal{M}}^{\sqrt{\rho}\omega}[\boldsymbol{w}_i(\omega,\delta)]&&\text{on }\partial D.
\end{aligned}\right.
\end{align}
Since $\boldsymbol{w}_i(\omega,\delta)$ is analytic with respect to both $\omega$ and $\delta$, there exist functions $\{\boldsymbol{w}_{i;l,k}\}_{l,k=0}^{+\infty}$ such that the power series
\begin{align}\label{wjpk}
\boldsymbol{w}_i(\omega,\delta)=\sum^{+\infty}_{l,k=0}\omega^{l}\delta^{k}\boldsymbol{w}_{i;l,k}
\end{align}
is convergent in $H^1(D)^3$. Moreover, it follows from Proposition \ref{DtNasy} that there are operators $\{\boldsymbol{\mathcal{M}}_{m}\}_{m=1}^{+\infty}$ such that $\boldsymbol{\mathcal{M}}^{\sqrt{\rho}\omega}$ can be decomposed as the following power series:
\begin{align}\label{Se:DtN}
\boldsymbol{\mathcal{M}}^{\sqrt{\rho}\omega}=\sum_{m=0}^{+\infty}\rho^{\frac{m}{2}}\omega^m\boldsymbol{\mathcal{M}}_{m},
\end{align}
where $\boldsymbol{\mathcal{M}}_0:=\boldsymbol{\mathcal{M}}$, with $\boldsymbol{\mathcal{M}}$ given by \eqref{eq:M0}, and $\boldsymbol{\mathcal{M}}_1$ is defined by \eqref{eq:M1}. This series is convergent in the space of operators from  $H^{\frac12}(\partial D)^3$ to $H^{-\frac12}(\partial D)^3$.

Assuming that $\boldsymbol{w}_{i;l,k}=0$ for $l,k<0$ and substituting \eqref{wjpk} and \eqref{Se:DtN} into \eqref{wjPDE}, we obtain
\begin{align*}
\left\{ \begin{aligned}
&-\sum^{+\infty}_{l,k=0}\omega^l\delta^k\mathcal{L}_{\lambda,\mu}\boldsymbol{w}_{i;l,k}+
\sum^{+\infty}_{l,k=0}\omega^l\delta^k\sum^{6}_{j=1}\bigg(\int_D\boldsymbol{w}_{i;l,k}\cdot \boldsymbol{\xi}_j\mathrm{d}\boldsymbol{x}\bigg)\boldsymbol{\xi}_j
-\sum^{+\infty}_{l,k=0}\omega^l\delta^k\rho\tau^2\boldsymbol{w}_{i;l-2,k}=\boldsymbol{\xi}_i&& \text{in } D,\\
&\sum^{+\infty}_{l,k=0}\omega^l\delta^k\partial_{\boldsymbol{\nu}}\boldsymbol{w}_{i;l,k}=\sum^{+\infty}_{l,k=0}\omega^l\delta^k\sum^l_{m=0}\rho^{\frac{m}{2}}\boldsymbol{\mathcal{M}}_m[\boldsymbol{w}_{i;l-m,k-1}]&& \text{on }\partial D.
\end{aligned}\right.
\end{align*}
Each coefficient of $\omega^l\delta^k$ must vanish independently, leading to
\begin{align*}%\label{wjpkPDE}
\left\{ \begin{aligned}
&-\mathcal{L}_{\lambda,\mu}\boldsymbol{w}_{i;l,k}+\sum^{6}_{j=1}\bigg(\int_D\boldsymbol{w}_{i;l,k}\cdot \boldsymbol{\xi}_j\mathrm{d}\boldsymbol{x}\bigg)\boldsymbol{\xi}_j=\rho\tau^2\boldsymbol{w}_{i;l-2,k}
+\boldsymbol{\xi}_i\delta_{l=0}\delta_{k=0}&& \text{in } D,\\
&\partial_{\boldsymbol{\nu}}\boldsymbol{w}_{i;l,k}=\sum^l_{m=0}\rho^{\frac{m}{2}}\boldsymbol{\mathcal{M}}_m[\boldsymbol{w}_{i;l-m,k-1}]&& \text{on }\partial D.
\end{aligned}\right.
\end{align*}

For $l=k=0$, we have
\begin{align*}%\label{wjpkPDE}
\left\{ \begin{aligned}
&-\mathcal{L}_{\lambda,\mu}\boldsymbol{w}_{i;0,0}+\sum^{6}_{j=1}\bigg(\int_D\boldsymbol{w}_{i;0,0}\cdot \boldsymbol{\xi}_j\mathrm{d}\boldsymbol{x}\bigg)\boldsymbol{\xi}_j=\boldsymbol{\xi}_i&& \text{in } D,\\
&\partial_{\boldsymbol{\nu}}\boldsymbol{w}_{i;0,0}=0&& \text{on }\partial D.
\end{aligned}\right.
\end{align*}
Thus, $\boldsymbol{w}_{i;0,0}=\boldsymbol{\xi}_i$. For $l=1$ and $k=0$, we get
\begin{align*}%\label{wjpkPDE}
\left\{ \begin{aligned}
&-\mathcal{L}_{\lambda,\mu}\boldsymbol{w}_{i;1,0}+\sum^{6}_{j=1}\bigg(\int_D\boldsymbol{w}_{i;1,0}\cdot \boldsymbol{\xi}_j\mathrm{d}\boldsymbol{x}\bigg)\boldsymbol{\xi}_j=0&& \text{in } D,\\
&\partial_{\boldsymbol{\nu}}\boldsymbol{w}_{i;1,0}=0&& \text{on }\partial D,
\end{aligned}\right.
\end{align*}
which implies $\boldsymbol{w}_{i;1,0}=0$. By following an inductive procedure, we deduce
\begin{align*}%\label{wj2p}
\boldsymbol{w}_{i;2l,0}=\rho^{l}\tau^{2l}\chi_D\boldsymbol{\xi}_i,\quad \boldsymbol{w}_{i;2l+1,0}=0,\quad \forall\, l=0,1,\cdots.
\end{align*}
Observe that for $l=0$ and $k=1$,
\begin{align*}%\label{wjpkPDE}
\left\{ \begin{aligned}
&-\mathcal{L}_{\lambda,\mu}\boldsymbol{w}_{i;0,1}+\sum^{6}_{j=1}\bigg(\int_D\boldsymbol{w}_{i;0,1}\cdot \boldsymbol{\xi}_j\mathrm{d}\boldsymbol{x}\bigg)\boldsymbol{\xi}_j=0&& \text{in } D,\\
&\partial_{\boldsymbol{\nu}}\boldsymbol{w}_{i;0,1}=\boldsymbol{\mathcal{M}}[\boldsymbol{\xi}_i]&& \text{on }\partial D.
\end{aligned}\right.
\end{align*}
Thus, $\boldsymbol{w}_{i;0,1}$ satisfies, for all $\boldsymbol{v}\in H^1(D)^3$,
\begin{align*}
&\lambda\int_{D}(\nabla\cdot\boldsymbol{w}_{i;0,1})(\nabla\cdot\overline{\boldsymbol{v}})\mathrm{d}\boldsymbol{x}
+\frac{\mu}{2}\int_{D}(\nabla\boldsymbol{w}_{i;0,1}
+\nabla\boldsymbol{w}_{i;0,1}^\top):(\nabla\overline{\boldsymbol{v}}+\overline{\boldsymbol{v}}^\top)
\mathrm{d}\boldsymbol{x}\nonumber\\
&\quad +\sum^6_{j=1}\int_D\boldsymbol{w}_{i;0,1}
\cdot\boldsymbol{\xi}_j\mathrm{d}\boldsymbol{x}\int_D\boldsymbol{\xi}_j\cdot\overline{\boldsymbol{v}}\mathrm{d}\boldsymbol{x}-\int_{\partial D}\boldsymbol{\mathcal{M}}[\boldsymbol{\xi}_i]\cdot\overline{\boldsymbol{v}}\mathrm{d}\sigma(\boldsymbol{x})=0.
\end{align*}
Setting $\boldsymbol{v}=\boldsymbol{\xi}_k$ and integrating by parts yields
\begin{align}\label{eq:w01}
\int_{ D}\boldsymbol{w}_{i;0,1}\cdot\boldsymbol{\xi}_k\mathrm{d}\boldsymbol{x}=-\mathscr{Q}_{ik},\quad k=1,\cdots,6,
\end{align}
which leads to
\begin{align*} \boldsymbol{w}_{i;0,1}=-\chi_D\sum_{k=1}^6\mathscr{Q}_{ik}\boldsymbol{\xi}_k+\widetilde{\boldsymbol{w}}_{i;0,1},%\label{wj01asy}
\end{align*}
where $\widetilde{\boldsymbol{w}}_{j;0,1}$ is the unique solution to \eqref{wj01}.

For $p=1$ and $k=1$,  the function $\boldsymbol{w}_{i;1,1}$ satisfies
\begin{align*}%\label{wjpkPDE}
\left\{ \begin{aligned}
&-\mathcal{L}_{\lambda,\mu}\boldsymbol{w}_{i;1,1}+\sum^{6}_{j=1}\bigg(\int_D\boldsymbol{w}_{i;1,1}\cdot \boldsymbol{\xi}_j\mathrm{d}\boldsymbol{x}\bigg)\boldsymbol{\xi}_j=0&&\text{in } D,\\
&\partial_{\boldsymbol{\nu}} \boldsymbol{w}_{i;1,1}=\sqrt{\rho}\boldsymbol{\mathcal{M}}_{1}[\boldsymbol{\xi}_i]&& \text{on }\partial D.
\end{aligned}\right.
\end{align*}
Using \eqref{eq:M1} and following a process similar to the proof of \eqref{eq:w01}, we obtain
\begin{align*}
\int_{ D}\boldsymbol{w}_{i;1,1}\cdot\boldsymbol{\xi}_k\mathrm{d}\boldsymbol{x}=\mathrm{i}\sqrt{\rho}\gamma\sum_{n=1}^3c_{ni}c_{nk},\quad k=1,\cdots,6.
\end{align*}
%where $c_{ni}=-\int_{\partial D}\boldsymbol{\mathcal{M}}[\mathbf{e}_n]\cdot\boldsymbol{\xi}_i\mathrm{d}\sigma(\boldsymbol{x})$.
As a result,
\begin{align*}%\label{wj11}
\boldsymbol{w}_{i;1,1}=\text{i}\sqrt{\rho}\gamma\chi_D\sum_{k=1}^6\sum^{3}_{n=1}c_{ni}c_{nk}\boldsymbol{\xi}_k+
\text{i}\sqrt{\rho}\gamma\widetilde{\boldsymbol{w}}_{i;1,1},
\end{align*}
where $\widetilde{\boldsymbol{w}}_{i;1,1}$ is the unique solution to \eqref{E:wj11}. Thus, from the expressions of  $\boldsymbol{w}_{i;2l,0},$ $\boldsymbol{w}_{i;2l+1,0},$ $\boldsymbol{w}_{i;0,1}$, and $\boldsymbol{w}_{i;1,1}$?, we conclude that the asymptotic expansion \eqref{wjasy} holds.
\end{proof}

From \eqref{eq:b} and \eqref{wjasy}, we obtain
\begin{align}\label{I-B}
\boldsymbol{\mathscr{I}}-\boldsymbol{\mathscr{A}}(\omega,\delta)=-\rho\tau^2\omega^2\boldsymbol{\mathscr{I}}+\delta\boldsymbol{\mathscr{Q}}-\mathrm{i}
\omega\delta\sqrt{\rho}\gamma\boldsymbol{\mathscr{R}}+\mathcal{O}((\omega^2+\delta)^2),
\end{align}
where $\boldsymbol{\mathscr{R}}=(\mathscr{R}_{ij})^6_{i,j=1}$ is a real-valued  symmetric matrix, with its entries $\mathscr{R}_{ij}$ defined by \eqref{eq:pij}.

\subsection{Asymptotic expansions of subwavelength resonances}

In this subsection, we provide the asymptotic expressions for the resonant frequencies associated with the interior elastic problem \eqref{int:solu}, specifically in the regime where subwavelength resonances occur.

Introducing the new variable $\eta:=\delta^{\frac{1}{2}}$ and rescaling $\omega:=\eta \widehat{\omega}$, we obtain
\begin{align*}
\boldsymbol{\mathscr{I}}-\boldsymbol{\mathscr{A}}(\eta\widehat{\omega},\eta^2)=
-\eta^2\widehat{\omega}^2\rho\tau^2\boldsymbol{\mathscr{I}}+\eta^2\boldsymbol{\mathscr{Q}}-
\mathrm{i}\eta^3\widehat{\omega}\sqrt{\rho}\gamma\boldsymbol{\mathscr{R}}+
\mathcal{O}((\widehat{\omega}^2\eta^2+\eta^2)^2).
\end{align*}
Define the matrix-valued function
\begin{align}\label{A}
\widehat{\boldsymbol{\mathscr{A}}}(\widehat{\omega},\eta):&=\eta^{-2}(\boldsymbol{\mathscr{I}}
-\boldsymbol{\mathscr{A}}(\eta\widehat{\omega},\eta^2))\nonumber\\
&=\widehat{\boldsymbol{\mathscr{A}}}_0(\widehat{\omega})
+\eta\widehat{\boldsymbol{\mathscr{A}}}_1(\widehat{\omega})+\mathcal{O}(\eta^2),
\end{align}
where
\begin{align*}
\widehat{\boldsymbol{\mathscr{A}}}_0(\widehat{\omega})=
-\rho\tau^2\widehat{\omega}^2\boldsymbol{\mathscr{I}}+\boldsymbol{\mathscr{Q}},\quad
\widehat{\boldsymbol{\mathscr{A}}}_1(\widehat{\omega})=
-\mathrm{i}\widehat{\omega}\sqrt{\rho}\gamma
\boldsymbol{\mathscr{R}}.
\end{align*}
Then $\widehat{\boldsymbol{\mathscr{A}}}(\widehat{\omega},\cdot)$ admits an analytic continuation in a neighborhood of $\eta=0$. Observe that $\eta\widehat{\omega}$ is a characteristic value of $\boldsymbol{\mathscr{I}}-\boldsymbol{\mathscr{A}}(\eta\widehat{\omega},\eta^2)$ provided that $\widehat{\omega}$
is a characteristic value of $\widehat{\boldsymbol{\mathscr{A}}}(\widehat{\omega},\eta)$.

Since $\boldsymbol{\mathscr{Q}}$ is real-valued symmetric (cf. Lemma \ref{Q:symmetric}),
we get  the leading  term of the asymptotic expansion
\begin{align*}%\label{first-order}
\pm\widehat{\omega}_{i;0}=\pm\sqrt{\frac{\lambda_i}{\rho\tau^2}},\quad i=1,\cdots,6.
\end{align*}
Furthermore, by Kato's perturbation theory \cite{Kato1995perturbation}, the eigenvectors $\boldsymbol{\mathscr{V}}_{i}(\eta)=(\mathscr{V}_{ij}(\eta))_{j=1}^6$of $\widehat{\boldsymbol{\mathscr{A}}}(\widehat{\omega},\eta)$  can be represented as
\begin{align}\label{eq:eigenvector}
\boldsymbol{\mathscr{V}}_{i}(\eta)=\boldsymbol{\mathscr{V}}_{i}+\eta\boldsymbol{\mathscr{V}}_{i;1}+\mathcal{O}(\eta^2),\quad i=1,\cdots,6,
\end{align}
where $\boldsymbol{\mathscr{V}}_i$ are defined by \eqref{eigen:v}.

Next, we express the eigenvalues of $\boldsymbol{\mathscr{Q}}$ as $\lambda^{\mathrm{w}}_{i}$, for $i=1,\cdots,\kappa$,  with $\kappa\in\mathbb{Z}^+$ and $\kappa\leq6$, where the eigenvalues are counted without regard to multiplicities. Denote by $m_i$ the algebraic multiplicity of $\lambda^{\mathrm{w}}_{i}$ so that $\sum^{\kappa}_{i=1}m_i=6$. It is clear to note that the geometric multiplicity of $\lambda^{\mathrm{w}}_{i}$ ?is equal to its algebraic multiplicity. Consequently, the corresponding eigenfrequencies are given by
\begin{align*}
\pm\widehat{\omega}^{\mathrm{w}}_{i;0}=\pm\sqrt{\frac{\lambda^{\mathrm{w}}_i}{\rho\tau^2}},\quad i=1,\cdots,\kappa.
\end{align*}
Without loss of generality, we focus on the eigenfrequency  $\widehat{\omega}^{\mathrm{w}}_{i;0}$ for a fixed $1\leq i\leq \kappa$.

For a small enough $\gamma_0>0$, define the  $\gamma_0$-neighborhood of $\widehat{\omega}^{\mathrm{w}}_{i;0}$ as
\begin{align*}
B(\widehat{\omega}^{\mathrm{w}}_{i;0};\gamma_0)
:=\left\{\widehat{\omega}\in\mathbb{C}:|\widehat{\omega}-(\widehat{\omega}^{\mathrm{w}}_{i;0}+0\mathrm{i})|<\gamma_0\right\}.
\end{align*}
Let $M(\widehat{\omega}^{\mathrm{w}}_{i;0},\widehat{\boldsymbol{\mathscr{A}}}_0(\cdot))$ denote the algebraic multiplicity of $\widehat{\omega}^{\mathrm{w}}_{i;0}$ with respect to the boundary $\partial B(\widehat{\omega}^{\mathrm{w}}_{i;0};\gamma_0)$ (cf. \cite{Lay}), defined as
\begin{align*}
M(\widehat{\omega}^{\mathrm{w}}_{i;0},\widehat{\boldsymbol{\mathscr{A}}}_0(\cdot)):=\frac{1}{2\pi \mathrm{i}} \mathrm{tr}\int_{\partial B(\widehat{\omega}^{\mathrm{w}}_{i;0};\gamma_0)}\widehat{\boldsymbol{\mathscr{A}}}_0(\widehat{\omega})^{-1}
\frac{\mathrm{d}}{\mathrm{d}\widehat{\omega}}\widehat{\boldsymbol{\mathscr{A}}}_0(\widehat{\omega})\mathrm{d}\widehat{\omega}.
\end{align*}
Moreover, for a fixed small $\gamma_0$, we have that for any $\widehat{\omega}\in\partial B(\widehat{\omega}^{\mathrm{w}}_{i;0};\gamma_0)$,
\begin{align*}
\|(\widehat{\boldsymbol{\mathscr{A}}}(\widehat{\omega},\eta)
-\widehat{\boldsymbol{\mathscr{A}}}_0(\widehat{\omega}))\widehat{\boldsymbol{\mathscr{A}}}_0(\widehat{\omega})^{-1}\|<1
\end{align*}
as $\eta\to 0$. It follows from the generalized Rouch\'{e} theorem that
\begin{align*}
M(\partial B(\widehat{\omega}^{\mathrm{w}}_{i;0};\gamma_0),\widehat{\boldsymbol{\mathscr{A}}}(\cdot,\eta))
=M(\widehat{\omega}^{\mathrm{w}}_{i;0},\widehat{\boldsymbol{\mathscr{A}}}_0(\cdot))=m_i,
\end{align*}
where $M(\partial B(\widehat{\omega}^{\mathrm{w}}_{i;0};\gamma_0),\widehat{\boldsymbol{\mathscr{A}}}(\cdot,\eta))$
represents the number of characteristic values of $\widehat{\boldsymbol{\mathscr{A}}}(\cdot,\eta)$ in $B(\widehat{\omega}^{\mathrm{w}}_{i;0};\gamma_0)$, i.e.,
\begin{align*}
M(\partial B(\widehat{\omega}^{\mathrm{w}}_{i;0};\gamma_0),\widehat{\boldsymbol{\mathscr{A}}}(\cdot,\eta)):=\frac{1}{2\pi \mathrm{i}} \mathrm{tr}\int_{\partial B(\widehat{\omega}^{\mathrm{w}}_{i;0};\gamma_0)}\widehat{\boldsymbol{\mathscr{A}}}(\widehat{\omega},\eta)^{-1}
\frac{\partial}{\partial\widehat{\omega}}\widehat{\boldsymbol{\mathscr{A}}}(\widehat{\omega},\eta)\mathrm{d}\widehat{\omega}.
\end{align*}

\begin{rema}
Based on the above analysis, the total number of characteristic values of $\widehat{\boldsymbol{\mathscr{A}}}(\cdot,\eta)$ in $B(0;\tilde{\gamma})$ is twelve, where  $B(\pm\widehat{\omega}^{\mathrm{w}}_{i;0};\gamma_0)\subset B(0;\tilde{\gamma}), i=1,\cdots,\kappa$. Furthermore, for each $i=1,\cdots,6$, the leading term of the characteristic value $\widehat{\omega}^{\pm}_i(\eta)$ of $\widehat{\boldsymbol{\mathscr{A}}}(\widehat{\omega},\eta)$, counted with multiplicity, is $\pm\widehat{\omega}_{i;0}$.
\end{rema}

Our next objective is to identify the characteristic values of $\widehat{\boldsymbol{\mathscr{A}}}(\cdot,\eta)$ near $\widehat{\omega}^{\mathrm{w}}_{i;0}$, denoted as $\{\widehat{\omega}^{+}_{i;0,j}(\eta)\}_{j=1}^{m_i}$. Specifically, we aim to calculate the leading terms of $\widehat{\omega}^+_{i;0,j}(\eta)-\widehat{\omega}^{\mathrm{w}}_{i;0}$ for $j=1,\cdots,m_i$.

For $\ell\geq1$, define the following functions:
\begin{align*}
p_{i;\ell}(\eta):=\frac{1}{2\pi \mathrm{i}} \mathrm{tr}\int_{\partial B(\widehat{\omega}^{\mathrm{w}}_{i;0};\gamma_0)}(\widehat{\omega}-\widehat{\omega}^{\mathrm{w}}_{i;0})^{\ell}\widehat{\boldsymbol{\mathscr{A}}}(\widehat{\omega},\eta)^{-1}
\frac{\partial}{\partial\widehat{\omega}}\widehat{\boldsymbol{\mathscr{A}}}(\widehat{\omega},\eta)\mathrm{d}\widehat{\omega}.
\end{align*}
From the structure of $\widehat{\boldsymbol{\mathscr{A}}}(\widehat{\omega},\eta)$, it follows that for every $\ell\geq1$, $p_{i;\ell}(\cdot)$ has an analytic continuation in $\eta$ within the set $B(0;\eta_0):=\{\eta\in\mathbb{R}:|\eta|<\eta_0\}$ for some small $\eta_0>0$. Furthermore, by \cite[Theorem 1.14]{Lay}, we have
\begin{align*}%\label{fun:det-A}
p_{i;\ell}(\eta)=\sum^{m_i}_{j=1}z_{i;j}(\eta)^{\ell},
\end{align*}
where $z_{i;j}(\eta):=\widehat{\omega}^+_{i;0,j}(\eta)-\widehat{\omega}^{\mathrm{w}}_{i;0}$ with $z_{i;j}(0)=0,j=1,\cdots,m_i$. These functions $p_{i;\ell}(\eta)$ are also known as Newton's functions in the variables $z_{i;j}(\eta)$ (cf. \cite{Mead1992Newton}). Define
\begin{align*}
P_{i;\eta}(\theta):=\prod_{j=1}^{m_i}(\theta-z_{i;j}(\eta)).
\end{align*}
Clearly, the values $z_{i;j}(\eta),j=1,\cdots,m_i,$ are the roots of $P_{i;\eta}(\theta)$. Observe that
\begin{align*}
P_{i;\eta}(\theta)=\theta^{m_i}+\sum_{j=1}^{m_i}(-1)^{j}\widehat{e}_{i;j}(\eta)\theta^{m_i-j},
\end{align*}
where $\widehat{e}_{i;j}(\eta)$ are elementary symmetric polynomials in the variables  $z_{i;j}(\eta)$. It follows that $\widehat{e}_{i;j}(0)=0, j=1,\cdots,m_i$. Thus, if we set $\widehat{e}_{i;0}(\eta)=1$, the functions $p_{i;\ell}(\eta)$ and $\widehat{e}_{i;j}(\eta)$ satisfy Newton's identities (cf. \cite{Mead1992Newton}):
\begin{align}\label{Newton}
\sum_{j=0}^{k-1}(-1)^{j}p_{i;k-j}(\eta)\widehat{e}_{i;j}(\eta)+(-1)^kk\widehat{e}_{i;k}(\eta)=0,\quad 1\leq k\leq m_i.
\end{align}
The recurrence relation \eqref{Newton} implies that $\widehat{e}_{i;j}(\eta),j=1,\cdots,m_i,$ are analytic in $B(0;\eta_0)$. Let $r_{i;j}(\eta):=(-1)^{j}\widehat{e}_{i;j}(\eta), j=1,\cdots,m_i,$ with $r_{i;0}(\eta)=1$. Therefore, we have
\begin{align*}
P_{i;\eta}(\theta)=\sum_{j=0}^{m_i}r_{i;j}(\eta)\theta^{m_i-j},
\end{align*}
where the functions $r_{i;j}(\eta)$ are analytic in $B(0;\eta_0)$ and satisfy $r_{i;j}(0)=(-1)^{j}\widehat{e}_{i;j}(0)=0$. Thus, $P_{i;\eta}(\theta)$ is a Weierstrass polynomial (cf. \cite{suwa2007introduction}).

\begin{lemm}
For a fixed $1\leq i\leq \kappa$, define an analytic function in the domain  $(\widehat{\omega},\eta)\in B(\widehat{\omega}^{\mathrm{w}}_{i;0};\gamma_0)\times B(0;\eta_0)$ as
\begin{align}\label{det:A}
\widetilde{f}(\widehat{\omega},\eta):=\det(\widehat{\boldsymbol{\mathscr{A}}}(\widehat{\omega},\eta)).
\end{align}
Then it admits the following factorization:
\begin{align*}
\widetilde{f}(\widehat{\omega},\eta)=P_{i;\eta}(\widehat{\omega}-\widehat{\omega}^{\mathrm{w}}_{i;0})\widetilde{g}(\widehat{\omega},\eta),
\end{align*}
where $\widetilde{g}(\cdot,\cdot)$ is analytic with respect to both $\omega$ and $\eta$, and does not vanish identically.
\end{lemm}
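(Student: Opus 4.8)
The plan is to recognize the statement as the Weierstrass preparation theorem applied to $\widetilde{f}$ at the point $(\widehat{\omega}^{\mathrm{w}}_{i;0},0)$, followed by an identification of the resulting Weierstrass polynomial with $P_{i;\eta}$ by matching roots with multiplicity.

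First I would record the behaviour of $\widetilde{f}$ along $\eta=0$. By \eqref{A}, $\widetilde{f}(\widehat{\omega},0)=\det\widehat{\boldsymbol{\mathscr{A}}}_0(\widehat{\omega})=\det(\boldsymbol{\mathscr{Q}}-\rho\tau^2\widehat{\omega}^2\boldsymbol{\mathscr{I}})=\prod_{k=1}^{6}(\lambda_k-\rho\tau^2\widehat{\omega}^2)$, a nonzero polynomial in $\widehat{\omega}$ (leading term $(-\rho\tau^2)^6\widehat{\omega}^{12}$). Since $\boldsymbol{\mathscr{Q}}$ is real symmetric positive definite (Lemma \ref{Q:symmetric}) with eigenvalue $\lambda^{\mathrm{w}}_i>0$ of multiplicity $m_i$, this polynomial, viewed as a function of $t=\widehat{\omega}^2$, vanishes to order $m_i$ at $t=\lambda^{\mathrm{w}}_i/(\rho\tau^2)=(\widehat{\omega}^{\mathrm{w}}_{i;0})^2$; as $\widehat{\omega}^{\mathrm{w}}_{i;0}\neq0$, the map $\widehat{\omega}\mapsto\widehat{\omega}^2$ is locally biholomorphic there, so $\widehat{\omega}\mapsto\widetilde{f}(\widehat{\omega},0)$ vanishes to order exactly $m_i$ at $\widehat{\omega}^{\mathrm{w}}_{i;0}$. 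This is precisely the regularity hypothesis of the Weierstrass preparation theorem in the two complex variables $(\widehat{\omega},\eta)$, using the analytic continuation of $\widehat{\boldsymbol{\mathscr{A}}}$ in $\eta$ near $0$. Applying it, after possibly shrinking $\gamma_0$ and $\eta_0$, yields
\begin{align*}
\widetilde{f}(\widehat{\omega},\eta)=W_{i;\eta}(\widehat{\omega}-\widehat{\omega}^{\mathrm{w}}_{i;0})\,\widetilde{g}(\widehat{\omega},\eta),
\end{align*}
where $\widetilde{g}$ is analytic and nowhere vanishing on $B(\widehat{\omega}^{\mathrm{w}}_{i;0};\gamma_0)\times B(0;\eta_0)$ (in particular not identically zero) and $W_{i;\eta}(\theta)=\theta^{m_i}+\sum_{j=1}^{m_i}a_{i;j}(\eta)\theta^{m_i-j}$ is a Weierstrass polynomial with $a_{i;j}$ analytic in $\eta$ and $a_{i;j}(0)=0$.

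It remains to prove $W_{i;\eta}=P_{i;\eta}$. Both are monic of degree $m_i$ in $\theta$, so it suffices to show they have the same roots counted with multiplicity. Since $\widetilde{g}$ is nonvanishing, the zeros of $\widehat{\omega}\mapsto\widetilde{f}(\widehat{\omega},\eta)$ in $B(\widehat{\omega}^{\mathrm{w}}_{i;0};\gamma_0)$, counted with multiplicity, are exactly $\widehat{\omega}^{\mathrm{w}}_{i;0}$ shifted by the roots of $W_{i;\eta}$. These zeros coincide, with matching multiplicities, with the characteristic values of $\widehat{\boldsymbol{\mathscr{A}}}(\cdot,\eta)$ in the disk (for the finite analytic matrix function $\widehat{\boldsymbol{\mathscr{A}}}$, the algebraic multiplicity of a characteristic value equals the order of the corresponding zero of the determinant), and by the Gohberg--Sigal theory their number equals $M(\partial B(\widehat{\omega}^{\mathrm{w}}_{i;0};\gamma_0),\widehat{\boldsymbol{\mathscr{A}}}(\cdot,\eta))=m_i$; they are the $\widehat{\omega}^{+}_{i;0,j}(\eta)=\widehat{\omega}^{\mathrm{w}}_{i;0}+z_{i;j}(\eta)$. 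Hence the roots of $W_{i;\eta}$ are exactly $\{z_{i;j}(\eta)\}_{j=1}^{m_i}$, so $W_{i;\eta}(\theta)=\prod_{j=1}^{m_i}(\theta-z_{i;j}(\eta))=P_{i;\eta}(\theta)$. Equivalently, the power sums of the roots of $W_{i;\eta}$ are $p_{i;\ell}(\eta)$ by the generalized argument principle, so Newton's identities \eqref{Newton} recover the coefficients $r_{i;j}(\eta)$.

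The step I expect to require the most care — rather than ingenuity — is the bookkeeping of multiplicities: one must identify the order of $\widehat{\omega}^{\mathrm{w}}_{i;0}$ as a zero of $\det\widehat{\boldsymbol{\mathscr{A}}}_0$ with the Gohberg--Sigal algebraic multiplicity $m_i$, and likewise match the order of each zero of $\det\widehat{\boldsymbol{\mathscr{A}}}(\cdot,\eta)$ with the partial multiplicity entering $p_{i;\ell}$. For a finite analytic matrix function this is classical, but it should be stated explicitly and combined with the Rouch\'e count $M(\partial B(\widehat{\omega}^{\mathrm{w}}_{i;0};\gamma_0),\widehat{\boldsymbol{\mathscr{A}}}(\cdot,\eta))=m_i$ obtained above, so as to guarantee that $W_{i;\eta}$ has the correct degree $m_i$ and that no root escapes $B(\widehat{\omega}^{\mathrm{w}}_{i;0};\gamma_0)$ for small $\eta$.
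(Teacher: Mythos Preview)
Your argument is correct but proceeds differently from the paper. The paper does not invoke the Weierstrass preparation theorem; instead it defines $\widetilde{g}$ directly as the quotient $\widetilde{f}/P_{i;\eta}(\cdot-\widehat{\omega}^{\mathrm{w}}_{i;0})$, noting that by construction every root of $P_{i;\eta}(\cdot-\widehat{\omega}^{\mathrm{w}}_{i;0})$ is a characteristic value of $\widehat{\boldsymbol{\mathscr{A}}}(\cdot,\eta)$ and hence a zero of $\widetilde{f}(\cdot,\eta)$, so the quotient is analytic; it then rules out any zero of $\widetilde{g}$ by a counting argument (a zero would force more than $m_i$ roots of $\widetilde{f}(\cdot,\eta_1)$ in the disk, contradicting the Rouch\'e count). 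Your route packages the analyticity and nonvanishing of $\widetilde{g}$ into the preparation theorem as a black box, and then reduces the identification $W_{i;\eta}=P_{i;\eta}$ to the same root-matching already implicit in the paper. The advantage of your approach is that the joint analyticity in $(\widehat{\omega},\eta)$ and the nowhere-vanishing of $\widetilde{g}$ come for free, whereas the paper's ``$\widetilde{g}$ is well-defined pointwisely and is analytic'' tacitly uses that the multiplicities match (not just the zero sets) and that the resulting quotient is jointly analytic --- points you handle explicitly. The paper's approach is shorter because $P_{i;\eta}$ has already been constructed with analytic coefficients via Newton's identities, so no separate identification step is needed.
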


\begin{proof}
For every $\eta\in B(0;\eta_0)$, each zero of $P_{i;\eta}(\cdot-\widehat{\omega}^{\mathrm{w}}_{i;0})$ is also a zero of $\widetilde{f}(\cdot,\eta)$. Thus, $\widetilde{g}$ is well-defined pointwisely and is analytic in both $\widehat{\omega}$ and $\eta$. Moreover, if there exists a pair of $(\widehat{\omega}_{1},\eta_1)\in B(\widehat{\omega}^{\mathrm{w}}_{i;0};\gamma_0)\times B(0;\eta_0)$ such that $\widetilde{g}(\widehat{\omega}_1,\eta_1)=0$, then the total multiplicity of the roots of $\widetilde{f}(\widehat{\omega},\cdot)$ at the point $\eta=\eta_1$  would be at least $m_i+1$. This leads to a contradiction, completing the proof.
\end{proof}

The following result addresses the decomposition property of Weierstrass polynomials. The details can be found in \cite[p. 397; Theorem 1]{Baumgartel1985analytic}.

\begin{lemm}\label{Weierstrass}
Let $P_{i;\eta}(\widehat{\omega}-\widehat{\omega}^{\mathrm{w}}_{i;0})$ be a Weierstrass polynomial. Then it has a uniquely determined decomposition into prime factors such that
\begin{align*}%\label{irreducible}
P_{i;\eta}(\widehat{\omega}-\widehat{\omega}^{\mathrm{w}}_{i;0})=\prod^{b_i}_{j=1}p_{i;\eta,j}
(\widehat{\omega}-\widehat{\omega}^{\mathrm{w}}_{i;0})^{\alpha_{i;j}},
\end{align*}
where $p_{i;\eta,j}(\cdot-\widehat{\omega}^{\mathrm{w}}_{i;0})$ are irreducible Weierstrass polynomials in $\widehat{\omega}$ of degree $\beta_{i;j}$ satisfying $\sum^{b_i}_{j=1}\alpha_{i;j}\beta_{i;j}=m_i$?, and are distinct from one another.
\end{lemm}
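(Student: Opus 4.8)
The plan is to deduce the factorization from the unique factorization property of an appropriate ring of holomorphic germs together with the Weierstrass preparation theorem, following the argument in \cite[p.~397]{Baumgartel1985analytic}. By construction $P_{i;\eta}(\widehat{\omega}-\widehat{\omega}^{\mathrm{w}}_{i;0})$ is monic of degree $m_i$ in the variable $\theta:=\widehat{\omega}-\widehat{\omega}^{\mathrm{w}}_{i;0}$, with coefficients $r_{i;j}(\eta)$ analytic in $B(0;\eta_0)$ and satisfying $r_{i;0}=1$, $r_{i;j}(0)=0$ for $j\geq1$; hence it is a Weierstrass polynomial over the ring $\mathcal{R}:=\mathbb{C}\{\eta\}$ of convergent power series in $\eta$.

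First I would recall that $\mathcal{R}$ is a discrete valuation ring, hence a unique factorization domain, and invoke Gauss's lemma to conclude that $\mathcal{R}[\theta]$ is also a unique factorization domain. Therefore $P_{i;\eta}(\theta)$ admits a factorization into irreducible elements of $\mathcal{R}[\theta]$, unique up to units and ordering. Since the units of $\mathcal{R}[\theta]$ coincide with those of $\mathcal{R}$, and the leading coefficients of the factors multiply to the leading coefficient $1$ of $P_{i;\eta}$, each factor has a unit leading coefficient and may be normalized to be monic in $\theta$.

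Next I would check that each such monic irreducible factor is itself a Weierstrass polynomial. Writing $P_{i;\eta}(\theta)=\prod_{l}q_{l}(\theta,\eta)$ with every $q_{l}$ monic in $\theta$ and evaluating at $\eta=0$, the identity $P_{i;0}(\theta)=\theta^{m_i}$ forces $q_{l}(\theta,0)=\theta^{\beta_{i;l}}$ with $\beta_{i;l}=\deg_{\theta}q_{l}$, so all lower-order coefficients of $q_{l}$ in $\theta$ vanish at $\eta=0$, which is exactly the Weierstrass condition. Collecting repeated irreducible factors then yields the claimed form $\prod_{j=1}^{b_i}p_{i;\eta,j}(\theta)^{\alpha_{i;j}}$ with pairwise distinct irreducible Weierstrass polynomials $p_{i;\eta,j}$ of degree $\beta_{i;j}$; uniqueness is inherited from unique factorization in $\mathcal{R}[\theta]$, and comparing degrees in $\theta$ gives $\sum_{j=1}^{b_i}\alpha_{i;j}\beta_{i;j}=m_i$.

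The only delicate point is ensuring that irreducibility in the polynomial ring $\mathcal{R}[\theta]$ is the right notion, i.e.\ that no factorization is lost by working in $\mathcal{R}[\theta]$ rather than in the ring of all holomorphic germs in $(\widehat{\omega},\eta)$. This is precisely what the Weierstrass preparation theorem supplies: any germ vanishing on the zero set of a Weierstrass polynomial is divisible by it within the germ ring, so a Weierstrass polynomial that is irreducible in $\mathcal{R}[\theta]$ remains irreducible in the germ ring. With this identification the decomposition constructed above is the desired one.
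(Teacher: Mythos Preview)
Your proposal is correct: the argument via the unique factorization in $\mathbb{C}\{\eta\}[\theta]$ (using that $\mathbb{C}\{\eta\}$ is a discrete valuation ring and Gauss's lemma), followed by the observation that monic factors of a Weierstrass polynomial are themselves Weierstrass polynomials, is the standard route and is sound.

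By way of comparison, the paper does not actually prove this lemma at all---it simply records the statement and cites \cite[p.~397; Theorem~1]{Baumgartel1985analytic} for the details. So you have in effect supplied the proof that the paper outsources to the reference, and your sketch is faithful to the classical argument found there. The only remark is that your final paragraph, invoking the Weierstrass preparation theorem to reconcile irreducibility in $\mathcal{R}[\theta]$ with irreducibility in the full germ ring $\mathbb{C}\{\eta,\theta\}$, is not strictly needed for the lemma as stated in the paper (which only concerns factorization into irreducible Weierstrass polynomials, and the subsequent use of the lemma---extracting Puiseux expansions of the roots---only requires the factorization in $\mathcal{R}[\theta]$); but it is a correct and relevant observation that rounds out the picture.
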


According to Lemma \ref{Weierstrass}, solving the equation $P_{i;\eta}(\widehat{\omega}-\widehat{\omega}^{\mathrm{w}}_{i;0})=0$ is equivalent to solving the equations $p_{i;\eta,j}
(\widehat{\omega}-\widehat{\omega}^{\mathrm{w}}_{i;0})=0,j=1,\cdots,b_i$. The zeros of $p_{i;\eta,j}
(\widehat{\omega}-\widehat{\omega}^{\mathrm{w}}_{i;0})=0$ are given by a $\beta_{i;j}$-valued algebraic function, which admits a Puiseux series expansion near $\eta=0$ (cf. \cite[p.405--406; Theorems 2 and 3]{Baumgartel1985analytic}). Therefore, we arrive at the following theorem.

\begin{theo}%\label{wasy}
As $\delta\rightarrow 0^+$, the $m_i$ zeros of $\widetilde{f}(\cdot,\eta)$ defined by \eqref{det:A} near $\widehat{\omega}^{\mathrm{w}}_{i;0}$? take on $\widetilde{m}_i$ distinct values, independent of $\eta$, denoted as $\{\widehat{\omega}^{+;\widetilde{\mathrm{w}}}_{i;0,j}(\eta)\}^{\widetilde{m}_i}_{j=1}$??, where $\widetilde{m}_i\leq{m}_i$.
Moreover, for each $j=1,\cdots,b_i$, there exists a $\beta_{i;j}$-valued algebraic function $\{(\widehat{\omega}^{+;\widetilde{\mathrm{w}}}_{i;0,j}(\eta)-\widehat{\omega}^{\mathrm{w}}_{i;0})_{l_i}\}^{\beta_{i;j}}_{l_i=1}$ that satisfies the equation $p_{i;\eta,j} (\widehat{\omega}-\widehat{\omega}^{\mathrm{w}}_{i;0})=0$ and admits the following Puiseux series expansion as $\delta\rightarrow 0^+$:
\begin{align}\label{w:expansion}
(\widehat{\omega}^{+;\widetilde{\mathrm{w}}}_{i;0,j}(\eta)-\widehat{\omega}^{\mathrm{w}}_{i;0})_{k_i}
=\sum^{+\infty}_{n=0}C_{i;j,n}\big(\eta^{\frac{1}{\beta_{i;j}}}\Big(e^{\mathrm{i}\frac{2\pi}{\beta_{i;j}}}\big)^{l_i-1}\Big)^n,
\end{align}
where
\begin{align*}
l_i=1,2,\cdots,\beta_{i;j},\quad\sum^{b_i}_{j=1}\beta_{i;j}=\widetilde{m}_i,\quad k_i=l_i+\sum^{j-1}_{p=1}\beta_{i;p},\quad
j=1,\cdots,b_i.
\end{align*}
\end{theo}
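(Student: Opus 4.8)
The plan is to package two classical facts---the Weierstrass preparation/factorization theorem and Puiseux's theorem for an irreducible Weierstrass polynomial---and feed them the data already assembled for $\widetilde{f}(\widehat{\omega},\eta)=\det(\widehat{\boldsymbol{\mathscr{A}}}(\widehat{\omega},\eta))$ near $(\widehat{\omega}^{\mathrm{w}}_{i;0},0)$. First I would record the exact order of the zero of $\widetilde{f}(\cdot,0)$: since $\boldsymbol{\mathscr{Q}}$ is positive definite (Lemma~\ref{Q:symmetric}), $\lambda^{\mathrm{w}}_i>0$, so $\widehat{\omega}^{\mathrm{w}}_{i;0}=\sqrt{\lambda^{\mathrm{w}}_i/(\rho\tau^2)}\neq0$, and $\widetilde{f}(\widehat{\omega},0)=\det\widehat{\boldsymbol{\mathscr{A}}}_0(\widehat{\omega})=\prod_{l=1}^{6}(\lambda_l-\rho\tau^2\widehat{\omega}^2)$ vanishes to order exactly $m_i$ at $\widehat{\omega}^{\mathrm{w}}_{i;0}$, each of the $m_i$ factors with $\lambda_l=\lambda^{\mathrm{w}}_i$ contributing a simple zero at $\widehat{\omega}^{\mathrm{w}}_{i;0}$. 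Combined with the factorization $\widetilde{f}(\widehat{\omega},\eta)=P_{i;\eta}(\widehat{\omega}-\widehat{\omega}^{\mathrm{w}}_{i;0})\,\widetilde{g}(\widehat{\omega},\eta)$ established above, this forces $P_{i;0}(\theta)=\theta^{m_i}$, hence $r_{i;j}(0)=0$ for all $j$, and $\widetilde{g}(\widehat{\omega}^{\mathrm{w}}_{i;0},0)\neq0$; after shrinking $\gamma_0$ and $\eta_0$ if necessary, $\widetilde{g}$ is a non-vanishing unit on $B(\widehat{\omega}^{\mathrm{w}}_{i;0};\gamma_0)\times B(0;\eta_0)$, so the zeros of $\widetilde{f}(\cdot,\eta)$ in $B(\widehat{\omega}^{\mathrm{w}}_{i;0};\gamma_0)$ coincide, with multiplicity, with the roots of the degree-$m_i$ Weierstrass polynomial $P_{i;\eta}(\cdot-\widehat{\omega}^{\mathrm{w}}_{i;0})$; I would also shrink here so that the generalized Rouch\'{e} count $m_i$ of the previous subsection stays exact.

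Next I would invoke Lemma~\ref{Weierstrass} to write $P_{i;\eta}(\theta)=\prod_{j=1}^{b_i}p_{i;\eta,j}(\theta)^{\alpha_{i;j}}$ with $\theta=\widehat{\omega}-\widehat{\omega}^{\mathrm{w}}_{i;0}$, the factors $p_{i;\eta,j}$ pairwise distinct irreducible Weierstrass polynomials of degree $\beta_{i;j}$ with $\sum_{j=1}^{b_i}\alpha_{i;j}\beta_{i;j}=m_i$. From $P_{i;0}(\theta)=\theta^{m_i}$ each factor satisfies $p_{i;0,j}(\theta)=\theta^{\beta_{i;j}}$, so every root of $p_{i;\eta,j}(\theta)=0$ tends to $0$ as $\eta\to0$. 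Fixing $j$, I would then apply the Puiseux expansion for an irreducible Weierstrass polynomial (cf.~\cite[pp.~405--406; Theorems~2 and~3]{Baumgartel1985analytic}): its $\beta_{i;j}$ roots constitute a single $\beta_{i;j}$-valued algebraic function $\theta=\sum_{n\geq0}C_{i;j,n}t^{n}$ with $t^{\beta_{i;j}}=\eta$ and $C_{i;j,0}=0$, whose $\beta_{i;j}$ branches are recovered by substituting $t=(e^{\mathrm{i}2\pi/\beta_{i;j}})^{l_i-1}\eta^{1/\beta_{i;j}}$ for $l_i=1,\dots,\beta_{i;j}$; this is exactly the expansion \eqref{w:expansion}, and it converges for small $|\eta|$ because the $t$-series does.

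Finally I would tally the distinct branches. Distinct irreducible Weierstrass polynomials are coprime in the ring of convergent power series, so the root sets of $p_{i;\eta,j_1}$ and $p_{i;\eta,j_2}$ share no branch when $j_1\neq j_2$; hence the distinct zeros of $\widetilde{f}(\cdot,\eta)$ near $\widehat{\omega}^{\mathrm{w}}_{i;0}$ are precisely the $\widetilde{m}_i:=\sum_{j=1}^{b_i}\beta_{i;j}$ branches produced above, enumerated by $k_i=l_i+\sum_{p=1}^{j-1}\beta_{i;p}$ as $l_i$ ranges over $1,\dots,\beta_{i;j}$ and $j$ over $1,\dots,b_i$, while counted with multiplicity they number $\sum_{j=1}^{b_i}\alpha_{i;j}\beta_{i;j}=m_i$, giving $\widetilde{m}_i\leq m_i$. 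Since the factorization of $P_{i;\eta}$ over convergent power series is unique, the integers $\widetilde{m}_i$ and $\beta_{i;j}$ are intrinsic and do not depend on the particular small $\eta$. I expect the only genuinely delicate point to be the bookkeeping of neighborhoods---choosing $\gamma_0$ and $\eta_0$ small enough that $\widetilde{g}$ is simultaneously a unit, the argument-principle count is exactly $m_i$, and all $b_i$ Puiseux series converge; once these are fixed, the theorem follows formally from the Weierstrass and Puiseux theorems quoted.
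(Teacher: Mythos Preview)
Your proposal is correct and follows essentially the same approach as the paper: the paper's argument is the short paragraph immediately preceding the theorem, which simply invokes Lemma~\ref{Weierstrass} to reduce $P_{i;\eta}(\widehat{\omega}-\widehat{\omega}^{\mathrm{w}}_{i;0})=0$ to the irreducible factors $p_{i;\eta,j}=0$ and then cites \cite[pp.~405--406; Theorems~2 and~3]{Baumgartel1985analytic} for the Puiseux expansion of each. Your write-up is more detailed---verifying the exact order $m_i$ at $\eta=0$, checking $\widetilde{g}$ is a unit, and tallying distinct versus repeated branches---but the underlying route is identical.
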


For a fixed $1\leq i\leq \kappa$, denote by $\boldsymbol{\mathscr{W}}_{i;j},j=1,\cdots,m_i$, the eigenvectors corresponding to the eigenvalue $\lambda^{\mathrm{w}}_{i}$ of $\boldsymbol{\mathscr{Q}}$ (cf. \eqref{eigen:v}). Accordingly, the eigenvectors of $\widehat{\boldsymbol{\mathscr{A}}}(\widehat{\omega},\eta)$ near $\widehat{\omega}^{\mathrm{w}}_{i;0}$  can be expressed as $\boldsymbol{\mathscr{W}}_{i;j}(\eta),j=1,\cdots,m_i$, satisfying \eqref{eq:eigenvector}. Therefore, based on \eqref{A} and \eqref{eq:eigenvector}, there exists $\widetilde{m}\in[1,m_i]$ such that
\begin{align*}
\widehat{\boldsymbol{\mathscr{A}}}((\widehat{\omega}^{+;\widetilde{\mathrm{w}}}_{i;0,j}(\eta))_{k_i},\eta)
\boldsymbol{\mathscr{W}}_{i;\widetilde{m}}(\eta)=
(-\rho\tau^2\widehat{\omega}^2+\lambda^{\mathrm{w}}_i)\boldsymbol{\mathscr{W}}_{i;\widetilde{m}}(\eta)\big|_{\widehat{\omega}
=(\widehat{\omega}^{+;\widetilde{\mathrm{w}}}_{i;0,j}(\eta))_{k_i}}+\mathcal{O}(\eta).
\end{align*}

Substituting \eqref{w:expansion} into the equality above yields
\begin{align*}
C_{i;j,n}=0,\quad n=0,1,\cdots,\beta_{i;j}-1.
\end{align*}
Thus, all the characteristic values $\widehat{\omega}^{+}_{i;0,j}(\eta),j=1,\cdots,m_i$, counted with their multiplicities, of $\widehat{\boldsymbol{\mathscr{A}}}(\cdot,\eta)$ near $\widehat{\omega}^{\mathrm{w}}_{i;0}$ can be represented as
\begin{align}\label{eq:muti}
\widehat{\omega}^{+}_{i;0,j}(\eta)=\widehat{\omega}^{\mathrm{w}}_{i;0}+C_{i;j,\beta_{i;j}}\eta+\mathcal{O}(\eta^{1+\frac{1}{\beta_{i;j}}}),
\end{align}
where, for each $j=1,\cdots,m_i$, $\beta_{i;j}$ is the degree of the irreducible Weierstrass polynomial satisfied by $\widehat{\omega}^{+}_{i;0,j}(\eta)$.

The following theorem provides the asymptotic representations of the subwavelength resonance frequencies.

\begin{theo}\label{theoasy}
Let $\omega\in\mathbb{C}$, $\delta\in\mathbb{R}^+$, and  $\epsilon\in\mathbb{R}^+$. All the characteristic values $\omega^{\pm}_i(\epsilon^{\frac{1}{2}})$, counted with their multiplicities,  of $\boldsymbol{\mathscr{I}}-\boldsymbol{\mathscr{A}}(\omega,\delta)$, representing the subwavelength resonant frequencies, have the following the asymptotic expansions as $\epsilon\rightarrow 0^+$:
\begin{align}\label{full:asymptotic}
\omega^{\pm}_i(\epsilon^{\frac{1}{2}})=\pm\sqrt{\frac{\lambda_i}{\rho}}\epsilon^{\frac{1}{2}}
-\mathrm{i}\frac{\gamma\boldsymbol{\mathscr{V}}^\top_i\boldsymbol
{\mathscr{R}}\boldsymbol{\mathscr{V}}_i}{2\sqrt{\rho}}\epsilon+\mathcal{O}(\epsilon^{1+\frac{1}{2\beta_{i}}}),\quad i=1,\cdots,6,
\end{align}
with the relation
\begin{align}\label{-reson}
\omega^-_i(\epsilon^{\frac{1}{2}})=-\overline{\omega^+_i}(\epsilon^{\frac{1}{2}}),
\end{align}
where  $\lambda_i$ and $\boldsymbol{\mathscr{V}}_i$ are the eigenvalues and the corresponding eigenvectors of $\boldsymbol{\mathscr{Q}}$, respectively, $\boldsymbol{\mathscr{Q}}$ and $\boldsymbol
{\mathscr{R}}$ are defined in \eqref{matrix:QR}, $\beta_{i}$ stands for the degree of the irreducible Weierstrass polynomial satisfied by $\omega^{\pm}_i(\epsilon^{\frac{1}{2}})$, and $\gamma$ is given by \eqref{ri}.
\end{theo}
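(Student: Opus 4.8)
### Proof proposal for Theorem~\ref{theoasy}

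\textbf{Overall strategy.} The plan is to combine three ingredients already assembled in the paper: the expansion \eqref{I-B}--\eqref{A} of the operator pencil $\boldsymbol{\mathscr{I}}-\boldsymbol{\mathscr{A}}$, the factorization of $\widetilde{f}(\widehat\omega,\eta)=\det(\widehat{\boldsymbol{\mathscr{A}}}(\widehat\omega,\eta))$ into Weierstrass polynomials and their Puiseux expansions \eqref{w:expansion}, and the identity $C_{i;j,n}=0$ for $n=0,\dots,\beta_{i;j}-1$ derived from matching against the perturbed eigenvectors. The final theorem is then a matter of (a) identifying the coefficient $C_{i;j,\beta_{i;j}}$ in \eqref{eq:muti} explicitly via a first-order perturbation computation, (b) undoing the rescaling $\omega=\eta\widehat\omega$, $\eta=\delta^{1/2}$, and $\tau^2=\delta/\epsilon$ to pass from $\widehat\omega$ back to $\omega$ expressed in powers of $\epsilon^{1/2}$, and (c) invoking Remark~\ref{resonance} for the symmetry relation \eqref{-reson}.

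\textbf{Step 1: first-order coefficient via eigenvector perturbation.} I would start from \eqref{eq:muti}, where $\widehat\omega^+_{i;0,j}(\eta)=\widehat\omega^{\mathrm{w}}_{i;0}+C_{i;j,\beta_{i;j}}\eta+\mathcal{O}(\eta^{1+1/\beta_{i;j}})$, and determine $C_{i;j,\beta_{i;j}}$. Write $\widehat{\boldsymbol{\mathscr{A}}}(\widehat\omega,\eta)=\widehat{\boldsymbol{\mathscr{A}}}_0(\widehat\omega)+\eta\widehat{\boldsymbol{\mathscr{A}}}_1(\widehat\omega)+\mathcal{O}(\eta^2)$ with $\widehat{\boldsymbol{\mathscr{A}}}_0(\widehat\omega)=-\rho\tau^2\widehat\omega^2\boldsymbol{\mathscr{I}}+\boldsymbol{\mathscr{Q}}$ and $\widehat{\boldsymbol{\mathscr{A}}}_1(\widehat\omega)=-\mathrm{i}\widehat\omega\sqrt{\rho}\gamma\boldsymbol{\mathscr{R}}$. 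Since a characteristic value $\widehat\omega$ of $\widehat{\boldsymbol{\mathscr{A}}}(\cdot,\eta)$ satisfies $\widehat{\boldsymbol{\mathscr{A}}}(\widehat\omega,\eta)\boldsymbol{\mathscr{V}}_i(\eta)=0$, I substitute $\widehat\omega=\widehat\omega^{\mathrm{w}}_{i;0}+C\eta+\cdots$ and $\boldsymbol{\mathscr{V}}_i(\eta)=\boldsymbol{\mathscr{V}}_i+\eta\boldsymbol{\mathscr{V}}_{i;1}+\cdots$ from \eqref{eq:eigenvector}, collect the $\mathcal{O}(\eta)$ terms, and project onto $\boldsymbol{\mathscr{V}}_i^\top$. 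Using $\boldsymbol{\mathscr{Q}}\boldsymbol{\mathscr{V}}_i=\lambda_i\boldsymbol{\mathscr{V}}_i$, $(\widehat\omega^{\mathrm{w}}_{i;0})^2=\lambda_i/(\rho\tau^2)$, so that $\widehat{\boldsymbol{\mathscr{A}}}_0(\widehat\omega^{\mathrm{w}}_{i;0})\boldsymbol{\mathscr{V}}_i=0$ and $\boldsymbol{\mathscr{V}}_i^\top\widehat{\boldsymbol{\mathscr{A}}}_0(\widehat\omega^{\mathrm{w}}_{i;0})=0$ (symmetry of $\boldsymbol{\mathscr{Q}}$ from Lemma~\ref{Q:symmetric}), the $\widehat{\boldsymbol{\mathscr{A}}}_0$ cross-terms involving $\boldsymbol{\mathscr{V}}_{i;1}$ drop out, leaving
\begin{align*}
\boldsymbol{\mathscr{V}}_i^\top\Big(-2\rho\tau^2\widehat\omega^{\mathrm{w}}_{i;0}\,C\,\boldsymbol{\mathscr{I}}
-\mathrm{i}\widehat\omega^{\mathrm{w}}_{i;0}\sqrt{\rho}\gamma\boldsymbol{\mathscr{R}}\Big)\boldsymbol{\mathscr{V}}_i=0,
\end{align*}
hence $C_{i;j,\beta_{i;j}}=C=-\dfrac{\mathrm{i}\gamma}{2\sqrt{\rho}\,\tau^2}\,\boldsymbol{\mathscr{V}}_i^\top\boldsymbol{\mathscr{R}}\boldsymbol{\mathscr{V}}_i$. (When $\lambda_i$ is a multiple eigenvalue with block $\{\boldsymbol{\mathscr{W}}_{i;j}\}$, the same projection against a suitable eigenbasis of the block applies, and the distinct values of $\boldsymbol{\mathscr{W}}_{i;j}^\top\boldsymbol{\mathscr{R}}\boldsymbol{\mathscr{W}}_{i;j}$ are exactly what produce the $\widetilde m_i$ distinct Puiseux branches; I would reconcile the indexing $j\leftrightarrow$ the labelling $\boldsymbol{\mathscr{V}}_i$ in the final statement.)

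\textbf{Step 2: undoing the scalings.} By construction $\eta\widehat\omega$ is a characteristic value of $\boldsymbol{\mathscr{I}}-\boldsymbol{\mathscr{A}}(\eta\widehat\omega,\eta^2)$, so with $\eta=\delta^{1/2}$ we get
\begin{align*}
\omega^+_i(\delta^{1/2})=\delta^{1/2}\widehat\omega^+_{i;0,j}(\delta^{1/2})
=\delta^{1/2}\Big(\widehat\omega^{\mathrm{w}}_{i;0}+C\,\delta^{1/2}+\mathcal{O}(\delta^{\frac12(1+1/\beta_i)})\Big).
\end{align*}
Now use $\widehat\omega^{\mathrm{w}}_{i;0}=\sqrt{\lambda_i/(\rho\tau^2)}$ together with $\tau^2=\delta/\epsilon$ from \eqref{contrast}: then $\delta^{1/2}\widehat\omega^{\mathrm{w}}_{i;0}=\delta^{1/2}\sqrt{\lambda_i\epsilon/(\rho\delta)}=\sqrt{\lambda_i/\rho}\,\epsilon^{1/2}$, and $\delta\cdot C=\delta\cdot\big(-\tfrac{\mathrm{i}\gamma}{2\sqrt\rho\,\tau^2}\boldsymbol{\mathscr{V}}_i^\top\boldsymbol{\mathscr{R}}\boldsymbol{\mathscr{V}}_i\big)=-\tfrac{\mathrm{i}\gamma}{2\sqrt\rho}\boldsymbol{\mathscr{V}}_i^\top\boldsymbol{\mathscr{R}}\boldsymbol{\mathscr{V}}_i\,\epsilon$, since $\delta/\tau^2=\epsilon$. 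The error term $\delta^{1/2}\cdot\mathcal{O}(\delta^{\frac12(1+1/\beta_i)})=\mathcal{O}(\delta^{1+\frac{1}{2\beta_i}})$ converts to $\mathcal{O}(\epsilon^{1+\frac{1}{2\beta_i}})$ under $\delta\sim\epsilon$ (as $\tau=\mathcal{O}(1)$ forces $\delta$ and $\epsilon$ comparable). This yields exactly \eqref{full:asymptotic}. The $\omega^-$ branch and relation \eqref{-reson} follow from Remark~\ref{resonance}: $\omega$ resonant $\Rightarrow-\overline\omega$ resonant with the same multiplicity, so the characteristic values near $-\widehat\omega^{\mathrm{w}}_{i;0}$ are $-\overline{\omega^+_i(\epsilon^{1/2})}$, and since the leading coefficient $\sqrt{\lambda_i/\rho}$ is real and the second coefficient is purely imaginary, $-\overline{\omega^+_i}$ has leading term $-\sqrt{\lambda_i/\rho}\,\epsilon^{1/2}$ and the same imaginary second term, confirming the sign pattern in \eqref{full:asymptotic}.

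\textbf{Main obstacle.} The routine scaling bookkeeping in Step~2 is harmless; the genuine difficulty is Step~1 in the \emph{degenerate} case. When $\lambda_i=\lambda^{\mathrm{w}}_i$ has multiplicity $m_i>1$, one must justify that the characteristic values of the analytic pencil $\widehat{\boldsymbol{\mathscr{A}}}(\cdot,\eta)$ split according to the eigenvalues of the \emph{restricted} perturbation $P_i\widehat{\boldsymbol{\mathscr{A}}}_1(\widehat\omega^{\mathrm{w}}_{i;0})P_i$ on the $\lambda^{\mathrm{w}}_i$-eigenspace (with $P_i$ the spectral projection), and that the Weierstrass/Puiseux machinery of the preceding lemmas is compatible with this — i.e., that $\beta_{i;j}$ equals the size of the corresponding Jordan-type block of the restricted perturbation and that the leading Puiseux coefficient is genuinely $\mathcal{O}(\eta^1)$ rather than a lower fractional power. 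This is precisely where the vanishing $C_{i;j,n}=0$ for $n<\beta_{i;j}$ (established just before \eqref{eq:muti}) is needed, and I would make sure the eigenvector-matching argument there is stated for each irreducible block $p_{i;\eta,j}$ separately so that the first non-vanishing coefficient $C_{i;j,\beta_{i;j}}$ is identified with the block eigenvalue of $-\tfrac{\mathrm{i}\gamma}{2\sqrt\rho\,\tau^2}\,\boldsymbol{\mathscr{R}}|_{\mathrm{eigenspace}}$, consistent with writing the answer in \eqref{full:asymptotic} in terms of $\boldsymbol{\mathscr{V}}_i^\top\boldsymbol{\mathscr{R}}\boldsymbol{\mathscr{V}}_i$ once an $\boldsymbol{\mathscr{R}}$-diagonalizing eigenbasis $\{\boldsymbol{\mathscr{V}}_i\}$ of each block is chosen.
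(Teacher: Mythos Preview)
Your proposal is correct and follows essentially the same approach as the paper's own proof: substitute the Puiseux expansion \eqref{eq:muti} and the eigenvector expansion \eqref{eq:eigenvector} into $\widehat{\boldsymbol{\mathscr{A}}}(\widehat\omega,\eta)\boldsymbol{\mathscr{V}}_i(\eta)=0$, project onto $\boldsymbol{\mathscr{V}}_i^\top$ at order $\eta$ to extract $C_{i;j,\beta_{i;j}}=-\mathrm{i}\gamma\,\boldsymbol{\mathscr{V}}_i^\top\boldsymbol{\mathscr{R}}\boldsymbol{\mathscr{V}}_i/(2\sqrt{\rho}\,\tau^2)$, then undo the rescaling $\omega=\delta^{1/2}\widehat\omega$ and use $\delta/\tau^2=\epsilon$ from \eqref{contrast}, invoking Remark~\ref{resonance} for \eqref{-reson}. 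Your treatment is in fact slightly more explicit than the paper's on two points: you spell out why the cross-term $(-\lambda_i\boldsymbol{\mathscr{I}}+\boldsymbol{\mathscr{Q}})\boldsymbol{\mathscr{V}}_{i;1}$ vanishes under left-multiplication by $\boldsymbol{\mathscr{V}}_i^\top$ (symmetry of $\boldsymbol{\mathscr{Q}}$), and you flag the degenerate case $m_i>1$ as requiring an $\boldsymbol{\mathscr{R}}$-diagonalizing choice of eigenbasis within the $\lambda_i^{\mathrm{w}}$-eigenspace---a point the paper handles only implicitly through the notation $\boldsymbol{\mathscr{W}}_{i;j}$.
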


\begin{proof}
By \eqref{eq:muti}, a straightforward calculation yields
\begin{align*}
\widehat{\boldsymbol{\mathscr{A}}}(\widehat{\omega}^{+}_{i;0,j}(\eta),\eta)\boldsymbol{\mathscr{V}}_{i;j}(\eta)
&=(-\lambda^{\mathrm{w}}_{i}\boldsymbol{\mathscr{I}}+\boldsymbol{\mathscr{Q}})\boldsymbol{\mathscr{V}}_{i;j}+
\eta(-2\rho\tau^2\widehat{\omega}^{\mathrm{w}}_{i;0}C_{i;j,\beta_{i;j}}\boldsymbol{\mathscr{I}}
-\mathrm{i}\widehat{\omega}^{\mathrm{w}}_{i;0}\sqrt{\rho}\gamma\boldsymbol{\mathscr{R}})\boldsymbol{\mathscr{V}}_{i;j}\\
&\quad+\eta(-\lambda^{\mathrm{w}}_{i}\boldsymbol{\mathscr{I}}
+\boldsymbol{\mathscr{Q}})\boldsymbol{\mathscr{V}}_{i;j,1})+\mathcal{O}(\eta^{1+\frac{1}{\beta_{i;j}}})=0.
\end{align*}
Therefore, we have
\begin{equation*}
C_{i;j,\beta_{i;j}}=-\mathrm{i}\frac{\gamma\boldsymbol{\mathscr{V}}^{\top}_{i;j}
\boldsymbol{\mathscr{R}}\boldsymbol{\mathscr{V}}_{i;j}}{2\sqrt{\rho}\tau^2}.
\end{equation*}
Since $\eta=\delta^{\frac{1}{2}}$ and $\omega=\eta \widehat{\omega}$, we can derive from Remark \ref{resonance} that
\begin{align}\label{Omega:delta}
\omega^{\pm}_i(\delta^{\frac{1}{2}})=\pm\sqrt{\frac{\lambda_i}{\rho\tau^2}}\delta^{\frac{1}{2}}-\mathrm{i}\frac{\gamma\boldsymbol{\mathscr{V}}^\top_i\boldsymbol
{\mathscr{R}}\boldsymbol{\mathscr{V}}_i}{2\sqrt{\rho}\tau^2}\delta+\mathcal{O}(\delta^{1+\frac{1}{2\beta_{i}}}),\quad i=1,\cdots,6,
\end{align}
counted with their multiplicities. Thus, using \eqref{contrast} leads to the conclusion that \eqref{full:asymptotic} and \eqref{-reson} hold.
\end{proof}

\section{Asymptotic analysis of the scattered field}\label{sec:4}

The objective of this section is to establish the asymptotic expansion of the solution to the interior elastic problem and to provide the far-field patterns, which include both the transverse and longitudinal components.

\subsection{Solution to the interior problem}

The aim of this subsection is to utilize the resonant frequencies to represent the solution to the interior problem
\eqref{int:solu}. This solution is related to $\boldsymbol{w}_i(\omega,\delta), \boldsymbol{w}_f(\omega,\delta)$, and $\boldsymbol{s}(\omega,\delta)$, as defined by \eqref{wi}, \eqref{wf}, and \eqref{eq:psi}, respectively.

Without loss of generality, we can assume that the center of the elastic body $D$ is located at the origin. Consider the incidence of a time-harmonic plane wave given by
\begin{align}\label{eq:compressed}
\boldsymbol{u}^{\mathrm{in}}(\boldsymbol x)=\boldsymbol{d}e^{\mathrm{i}k_p\boldsymbol x\cdot \boldsymbol{d}},
\end{align}
which is known as the compressed plane wave, where $\boldsymbol{d}$ is the unit incident direction vector and $k_p$ is defined by \eqref{wave number}.

The following proposition shows the asymptotic expansion of $\boldsymbol{w}_f(\omega,\delta)$ with respect to $\omega$ and $\delta$.

\begin{prop}\label{prop:wf}
Let $\omega\in\mathbb{C}$ and  $\delta\in\mathbb{R}^+$. The following asymptotic representation holds for the solution $\boldsymbol{w}_{f}$ to the variational problem \eqref{wf} as $\omega\rightarrow0$ and $\delta\rightarrow 0^+$:
\begin{align}\label{wfasy}
\boldsymbol{w}_{\boldsymbol{f}}(\omega,\delta)=\delta\chi_D\sum_{k=1}^6\sum_{n=1}^3(\boldsymbol{d}\cdot \mathbf{e}_n)c_{nk}\boldsymbol{\xi}_k+\delta\widetilde{\boldsymbol{w}}_{\boldsymbol f;0,1}+\mathcal{O}(\omega\delta+\delta^2),
\end{align}
where $c_{nk}$ are defined in \eqref{eq:pij}, and $\widetilde{\boldsymbol{w}}_{\boldsymbol f;0,1}$ is the unique solution to the following elastic problem:
\begin{align}\label{v01}
\left\{ \begin{aligned}
&-\mathcal{L}_{\lambda,\mu}\widetilde{\boldsymbol{w}}_{\boldsymbol f;0,1}+\sum_{k=1}^6\sum_{n=1}^3(\boldsymbol{d}\cdot \mathbf{e}_{n})c_{nk}\boldsymbol{\xi}_k &&\text{\rm in } D,\\
&\partial_{\boldsymbol{\nu}}\widetilde{\boldsymbol{w}}_{\boldsymbol f;0,1}=-\sum_{n=1}^3(\boldsymbol{d}\cdot \mathbf{e}_{n})\boldsymbol{\mathcal{M}}[\mathbf{e}_n]&& \text{\rm on }\partial D,\\
&\int_D\widetilde{\boldsymbol{w}}_{\boldsymbol f;0,1}\cdot\boldsymbol{\xi}_k\mathrm{d}\boldsymbol{x}=0,&&k=1,\cdots,6.
\end{aligned}\right.
\end{align}
\end{prop}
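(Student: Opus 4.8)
The plan is to follow the pattern of the proof of Proposition~\ref{wi:expansion}, the only new features being that the incident field now produces the inhomogeneity and that the leading contribution sits at order $\delta$ rather than at order $1$. First I would pass from \eqref{wf} to a strong form: inserting the definition \eqref{a(u,v)} of $a_{\omega,\delta}$ and integrating by parts shows that $\boldsymbol{w}_{\boldsymbol f}(\omega,\delta)$ solves
\begin{align*}
\left\{\begin{aligned}
&-\mathcal{L}_{\lambda,\mu}\boldsymbol{w}_{\boldsymbol f}+\sum_{j=1}^6\Big(\int_D\boldsymbol{w}_{\boldsymbol f}\cdot\boldsymbol{\xi}_j\,\mathrm{d}\boldsymbol{x}\Big)\boldsymbol{\xi}_j-\rho\tau^2\omega^2\boldsymbol{w}_{\boldsymbol f}=0 &&\text{in }D,\\
&\partial_{\boldsymbol{\nu}}\boldsymbol{w}_{\boldsymbol f}=\delta\boldsymbol{\mathcal{M}}^{\sqrt{\rho}\omega}[\boldsymbol{w}_{\boldsymbol f}]+\delta\big(\partial_{\boldsymbol{\nu}}\boldsymbol{u}^{\mathrm{in}}-\boldsymbol{\mathcal{M}}^{\sqrt{\rho}\omega}[\boldsymbol{u}^{\mathrm{in}}]\big) &&\text{on }\partial D.
\end{aligned}\right.
\end{align*}
Since the right-hand side of \eqref{wf} carries an overall factor $\delta$, Lemma~\ref{lemma coer} and the Lax--Milgram theorem give $\boldsymbol{w}_{\boldsymbol f}(\omega,0)=0$; hence in the convergent expansion $\boldsymbol{w}_{\boldsymbol f}(\omega,\delta)=\sum_{l,k=0}^{+\infty}\omega^l\delta^k\boldsymbol{w}_{\boldsymbol f;l,k}$ one has $\boldsymbol{w}_{\boldsymbol f;l,0}=0$ for every $l$, so $\boldsymbol{w}_{\boldsymbol f}(\omega,\delta)=\delta\,\boldsymbol{w}_{\boldsymbol f;0,1}+\mathcal{O}(\omega\delta+\delta^2)$ and it remains only to identify $\boldsymbol{w}_{\boldsymbol f;0,1}$.

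Next I would expand the data in $\omega$: from \eqref{eq:compressed}, $\boldsymbol{u}^{\mathrm{in}}=\boldsymbol{d}+\mathcal{O}(\omega)$, and since a constant field has vanishing conormal derivative, $\partial_{\boldsymbol{\nu}}\boldsymbol{u}^{\mathrm{in}}=\mathcal{O}(\omega)$; combining this with \eqref{Se:DtN} and the linearity $\boldsymbol{\mathcal{M}}[\boldsymbol{d}]=\sum_{n=1}^3(\boldsymbol{d}\cdot\mathbf{e}_n)\boldsymbol{\mathcal{M}}[\mathbf{e}_n]$ shows that the coefficient of $\omega^0\delta^1$ in $\delta(\partial_{\boldsymbol{\nu}}\boldsymbol{u}^{\mathrm{in}}-\boldsymbol{\mathcal{M}}^{\sqrt{\rho}\omega}[\boldsymbol{u}^{\mathrm{in}}])$ is $-\sum_{n=1}^3(\boldsymbol{d}\cdot\mathbf{e}_n)\boldsymbol{\mathcal{M}}[\mathbf{e}_n]$. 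Matching the coefficient of $\omega^0\delta^1$ in the strong form above (using $\boldsymbol{w}_{\boldsymbol f;0,0}=0$), $\boldsymbol{w}_{\boldsymbol f;0,1}$ satisfies $-\mathcal{L}_{\lambda,\mu}\boldsymbol{w}_{\boldsymbol f;0,1}+\sum_{j}(\int_D\boldsymbol{w}_{\boldsymbol f;0,1}\cdot\boldsymbol{\xi}_j\,\mathrm{d}\boldsymbol{x})\boldsymbol{\xi}_j=0$ in $D$ with $\partial_{\boldsymbol{\nu}}\boldsymbol{w}_{\boldsymbol f;0,1}=-\sum_n(\boldsymbol{d}\cdot\mathbf{e}_n)\boldsymbol{\mathcal{M}}[\mathbf{e}_n]$ on $\partial D$. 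Testing this against $\boldsymbol{\xi}_k$ and integrating by parts (the strain part of $a_{0,0}$ annihilates $\boldsymbol{\xi}_k\in\Xi$) yields, via the definition \eqref{eq:pij} of $c_{nk}$, the identity $\int_D\boldsymbol{w}_{\boldsymbol f;0,1}\cdot\boldsymbol{\xi}_k\,\mathrm{d}\boldsymbol{x}=\sum_n(\boldsymbol{d}\cdot\mathbf{e}_n)c_{nk}$ for $k=1,\dots,6$. Writing $\boldsymbol{w}_{\boldsymbol f;0,1}=\chi_D\sum_{k}(\sum_n(\boldsymbol{d}\cdot\mathbf{e}_n)c_{nk})\boldsymbol{\xi}_k+\widetilde{\boldsymbol{w}}_{\boldsymbol f;0,1}$ and subtracting off the $\Xi$-part (which is killed by both $\mathcal{L}_{\lambda,\mu}$ and $\partial_{\boldsymbol{\nu}}$) shows that $\widetilde{\boldsymbol{w}}_{\boldsymbol f;0,1}$ solves \eqref{v01}, which is precisely \eqref{wfasy}.

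The one point genuinely requiring attention is that \eqref{v01} is a Neumann-type problem, hence solvable only under a compatibility condition. Uniqueness is immediate: the difference of two solutions lies in $\Xi$ yet has vanishing $\Xi$-projections, so it is zero. For existence I would check the compatibility condition directly: pairing the interior equation of \eqref{v01} with $\boldsymbol{\xi}_m$ and integrating by parts requires $-\int_{\partial D}\partial_{\boldsymbol{\nu}}\widetilde{\boldsymbol{w}}_{\boldsymbol f;0,1}\cdot\boldsymbol{\xi}_m\,\mathrm{d}\sigma(\boldsymbol{x})=\int_D\sum_k\sum_n(\boldsymbol{d}\cdot\mathbf{e}_n)c_{nk}\boldsymbol{\xi}_k\cdot\boldsymbol{\xi}_m\,\mathrm{d}\boldsymbol{x}$; the right-hand side equals $\sum_n(\boldsymbol{d}\cdot\mathbf{e}_n)c_{nm}$ by orthonormality of $\{\boldsymbol{\xi}_k\}$, while the left-hand side equals $\sum_n(\boldsymbol{d}\cdot\mathbf{e}_n)(-\int_{\partial D}\boldsymbol{\mathcal{M}}[\mathbf{e}_n]\cdot\boldsymbol{\xi}_m\,\mathrm{d}\sigma(\boldsymbol{x}))=\sum_n(\boldsymbol{d}\cdot\mathbf{e}_n)c_{nm}$ again by \eqref{eq:pij}, so the two sides agree and $\widetilde{\boldsymbol{w}}_{\boldsymbol f;0,1}$ is well defined. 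Finally, lumping all $\omega^l\delta^k$ terms with $k\geq 2$ into $\mathcal{O}(\delta^2)$ and those with $k=1,\ l\geq 1$ into $\mathcal{O}(\omega\delta)$ gives the stated remainder. The bulk of the argument is bookkeeping parallel to Proposition~\ref{wi:expansion}; the only place where something could go wrong, and where the specific structure of the DtN coefficients enters, is this solvability check for \eqref{v01}.
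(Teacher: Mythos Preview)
Your proof is correct and follows essentially the same route as the paper's own argument: pass to the strong form, expand in a double power series, show $\boldsymbol{w}_{\boldsymbol f;l,0}=0$, extract the $(l,k)=(0,1)$ coefficient, test against $\boldsymbol{\xi}_k$ to compute the $\Xi$-projections, and split off the $\Xi$-part. Your shortcut for $\boldsymbol{w}_{\boldsymbol f;l,0}=0$ via $\boldsymbol{w}_{\boldsymbol f}(\omega,0)=0$ is slightly slicker than the paper's induction, and your explicit solvability check for \eqref{v01} is a detail the paper leaves implicit, but neither constitutes a different approach.
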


\begin{proof}
Thanks to the definition of $a_{\omega,\delta}({\boldsymbol{u},\boldsymbol{v}})$ given in \eqref{a(u,v)} and the  variational problem stated in \eqref{wf}, the following strong form holds:
\begin{align}\label{wfPDE}
\left\{ \begin{aligned}
&-\mathcal{L}_{\lambda,\mu}\boldsymbol{w}_{\boldsymbol{f}}(\omega,\delta)+\sum^{6}_{i=1}\bigg(\int_D\boldsymbol{w}_{\boldsymbol{f}}(\omega,\delta)\cdot \boldsymbol{\xi}_i\mathrm{d}\boldsymbol{x}\bigg)\boldsymbol{\xi}_i-\rho\tau^2\omega^2\boldsymbol{w}_{\boldsymbol{f}}(\omega,\delta)=0&&\text{in }D,\\
&\partial_{\boldsymbol{\nu}}\boldsymbol{w}_{\boldsymbol{f}}(\omega,\delta)=\delta \boldsymbol{\mathcal{M}}^{\sqrt{\rho}\omega}[\boldsymbol{w}_{\boldsymbol{f}}(\omega,\delta)-\boldsymbol{u}^{\mathrm{in}}]+\delta\partial_{\boldsymbol{\nu}}{\boldsymbol{u}^{\mathrm{in}}}&& \text{on }\partial D.\\
\end{aligned}\right.
\end{align}
It is clear that
\begin{align}\label{eq:incidence}
\boldsymbol{u}^{\mathrm{in}}(\boldsymbol x)=\sum^{+\infty}_{l=0}\omega^l\boldsymbol{u}^{\mathrm{in}}_l(\boldsymbol x),
\end{align}
where $\boldsymbol{u}^{\mathrm{in}}_l(\boldsymbol x):=\frac{\mathrm{i}^{l}(\boldsymbol x\cdot \boldsymbol{d})^l}{l!}\big(\frac{\rho}{\lambda+2\mu}\big)^{\frac{l}{2}}\boldsymbol{d}.$  On the other hand, it follows from the Taylor expansion of $\boldsymbol{u}^{\mathrm{in}}(\cdot)$ at the origin that
\begin{align*}
\boldsymbol{u}^{\mathrm{in}}(\boldsymbol x)=\sum^{+\infty}_{l=0}\frac{1}{l!}(\boldsymbol x \cdot \nabla)^{l}\boldsymbol{u}^{\mathrm{in}}(0),
\end{align*}
where $(\boldsymbol x \cdot \nabla)^{l}:=\sum_{1\leq i_1,i_2,\cdots,i_l\leq3}x_{i_1}x_{i_2}\cdots x_{i_l}\partial^{l}_{x_{i_1}x_{i_2}\cdots x_{i_l}}$.
Consequently, we obtain
\begin{align*}
\omega^l\boldsymbol{u}^{\mathrm{in}}_l(\boldsymbol x)=\frac{1}{l!}(\boldsymbol x \cdot \nabla)^{l}\boldsymbol{u}^{\mathrm{in}}(0), \quad
\boldsymbol{u}^{\mathrm{in}}_0(\boldsymbol x)=\boldsymbol{u}^{\mathrm{in}}(0)=\boldsymbol{d}.
\end{align*}

Notice that $\boldsymbol{w}_{\boldsymbol f}(\omega,\delta)$ is an analytic function of $\omega$ and $\delta$. Hence, there are functions $\{\boldsymbol{w}_{i;l,k}\}_{l,k=0}^{+\infty}$, which the corresponding power series converges in $H^1(D)^3$:
\begin{align}\label{vpn}
\boldsymbol{w}_{f}(\omega,\delta)=\sum_{l,k=0}^{+\infty}\omega^l\delta^k\boldsymbol{w}_{\boldsymbol{f};l,k}.
\end{align}
We assume that $\boldsymbol{w}_{\boldsymbol{f};l,k}=0$ for $l,k< 0.$
Substituting \eqref{Se:DtN}, \eqref{eq:incidence}, and \eqref{vpn} into \eqref{wfPDE} yields
\begin{align*}%\label{vpkPDE}
\left\{ \begin{aligned}
&-\mathcal{L}_{\lambda,\mu}\boldsymbol{w}_{\boldsymbol{f};l,k}+\sum^{6}_{i=1}\bigg(\int_D\boldsymbol{w}_{\boldsymbol{f};l,k}\cdot \boldsymbol{\xi}_i\mathrm{d}\boldsymbol{x}\bigg)\boldsymbol{\xi}_i-\rho\tau^2\boldsymbol{w}_{\boldsymbol{f};l-2,k}=0&& \text{in } D,\\
&\partial_{\boldsymbol{\nu}}\boldsymbol{w}_{\boldsymbol{f};l,k}=\sum^l_{m=0}\rho^{\frac{m}{2}}\boldsymbol{\mathcal{M}}_{m}[\boldsymbol{w}_{\boldsymbol{f};l-m,k-1}]
+\Big(\partial_{\boldsymbol{\nu}} \boldsymbol{u}^{\mathrm{in}}_l-\sum^l_{m=0}\rho^{\frac{m}{2}}\boldsymbol{\mathcal{M}}_{m}[\boldsymbol{u}^{\mathrm{in}}_{l-m}]\Big)\delta_{k=1}&&\text{on }\partial D.
\end{aligned}\right.
\end{align*}

If $l=k=0$, then
\begin{align*}%\label{wjpkPDE}
\left\{ \begin{aligned}
&-\mathcal{L}_{\lambda,\mu}\boldsymbol{w}_{\boldsymbol{f};0,0}+\sum^{6}_{i=1}\bigg(\int_D\boldsymbol{w}_{\boldsymbol{f};0,0}\cdot \boldsymbol{\xi}_i\mathrm{d}\boldsymbol{x}\bigg)\boldsymbol{\xi}_i=0&& \text{in } D,\\
&\partial_{\boldsymbol{\nu}}\boldsymbol{w}_{\boldsymbol{f};0,0}=0&& \text{on }\partial D,
\end{aligned}\right.
\end{align*}
which leads to $\boldsymbol{w}_{\boldsymbol{f};0,0}=0$. Consequently, by induction, we get $\boldsymbol{w}_{\boldsymbol{f};l,0}=0$ for any $l\geq0$.

For $l=0$ and $k=1$, we consider
\begin{align*}%\label{wjpkPDE}
\left\{ \begin{aligned}
&-\mathcal{L}_{\lambda,\mu}\boldsymbol{w}_{\boldsymbol{f};0,1}+\sum^{6}_{i=1}\bigg(\int_D\boldsymbol{w}_{\boldsymbol{f};0,1}\cdot \boldsymbol{\xi}_i\mathrm{d}\boldsymbol{x}\bigg)\boldsymbol{\xi}_i=0&& \text{in } D,\\
&\partial_{\boldsymbol{\nu}}\boldsymbol{w}_{\boldsymbol{f};0,1}=-\sum_{n=1}^3(\boldsymbol{d}\cdot \mathbf{e}_n)\boldsymbol{\mathcal{M}}[\mathbf{e}_n]&& \text{on }\partial D.
\end{aligned}\right.
\end{align*}
Using a similar procedure as that in the proof of \eqref{eq:w01}, we have
\begin{align*}
\int_{ D}\boldsymbol{w}_{\boldsymbol{f};0,1}\cdot\boldsymbol{\xi}_k\mathrm{d}\boldsymbol{x}=\int_{\partial D}\partial_{\boldsymbol{\nu}}\boldsymbol{w}_{\boldsymbol{f};0,1}\cdot\boldsymbol{\xi}_k\mathrm{d}\sigma(\boldsymbol{x})=\sum^3_{n=1}(\boldsymbol{d}\cdot \mathbf{e}_n)c_{nk}.
\end{align*}
Therefore, we can deduce that
\begin{align*}
\boldsymbol{w}_{\boldsymbol{f};0,1}=\sum_{k=1}^6\sum_{n=1}^3(\boldsymbol{d}\cdot \mathbf{e}_n)c_{nk}\boldsymbol{\xi}_k+\widetilde{\boldsymbol{w}}_{\boldsymbol{f};0,1},
\end{align*}
where $\widetilde{\boldsymbol{w}}_{\boldsymbol{f};0,1}$ is the unique solution to \eqref{v01}. As a result, we obtain \eqref{wfasy}, which completes the proof.
\end{proof}

\begin{rema}
Similar conclusions to those in Proposition \ref{prop:wf} hold if the incident waves take the following forms:
\begin{enumerate}

\item[(i)] Consider the shear plane wave $\boldsymbol{u}^{\mathrm{in}}(\boldsymbol{x})=\boldsymbol{d}^{\bot}e^{\mathrm{i}k_s\boldsymbol x\cdot \boldsymbol{d}}$, where $\boldsymbol{d}^{\bot}$ is the unit polarization vector, satisfying $\boldsymbol{d}\cdot \boldsymbol{d}^\bot=0$, and $k_s$? is defined by \eqref{wave number}.

\item[(ii)] Consider a linear combination of a shear plane wave and a compression plane wave.
\end{enumerate}
\end{rema}

Let $\boldsymbol{\mathscr{C}}$ be a $3\times6$ matrix with its $(i,j)$-th entry defined as $\mathscr{C}_{ij}:=c_{ij}$, where $c_{ij}$ are given by \eqref{eq:pij}.  Based on Proposition \ref{prop:wf}, we establish the asymptotic expansion of the $i$-th component $g_i(\omega,\delta)$ of $\boldsymbol{g}(\omega,\delta)$. It follows from \eqref{eq:g}
 and \eqref{wfasy} that
\begin{align*}%\label{Fi}
g_{i}(\omega,\delta)=\delta\sum_{n=1}^3(\boldsymbol{d}\cdot\mathbf{e}_n)c_{ni}+\mathcal{O}(\omega\delta+\delta^2),
\end{align*}
which leads to
\begin{align}\label{Fasy}
\boldsymbol{g}(\omega,\delta)=\delta\boldsymbol{\mathscr{C}}^\top\boldsymbol{d}+\mathcal{O}(\omega\delta+\delta^2).
\end{align}

For subsequent analysis, we make the following assumption.

\begin{assu}
For every resonant frequency $\omega^{\pm}_i(\delta^{\frac{1}{2}})$ or $\omega^{\pm}_i(\epsilon^{\frac{1}{2}})$, we assume that the coefficient corresponding to the imaginary part of order $\mathcal{O}(\delta)$ does not vanish. Specifically, we require that
\begin{align} \label{a:imaginary}
\boldsymbol{\mathscr{V}}^\top_i\boldsymbol
{\mathscr{R}}\boldsymbol{\mathscr{V}}_i>0,\quad i=1,\cdots,6.
\end{align}
\end{assu}
In fact, resonances only occur when $\omega \in \mathbb{C}$. Since $\boldsymbol{\mathscr{R}}$ is  positive semi-definite (cf. Lemma \ref{Q:symmetric}), it is possible that $\boldsymbol{\mathscr{V}}^\top_i\boldsymbol
{\mathscr{R}}\boldsymbol{\mathscr{V}}_i=0$ for some $i$ in certain cases. This indicates that the imaginary parts exhibit higher order behavior in $\delta$ or $\epsilon$.

\begin{lemm}\label{le:enhan}
Let $\omega\in\mathbb{R}$ and $\delta\in\mathbb{R}^+$ satisfy $\omega=\mathcal{O}(\delta^{\frac{1}{2}})$. Under the assumption \eqref{a:imaginary}, we have for $i=1,\cdots,6$,
\begin{align}\label{error}
\frac{\delta}{(\omega-\omega^+_i(\delta^{\frac{1}{2}}))(\omega-\omega^-_i(\delta^{\frac{1}{2}}))}=\frac{\delta(1+\mathcal{O}(\delta^{\frac{1}{2\beta_i}}))}{\omega^2-\frac{\lambda_i}{\rho\tau^2}\delta+\mathrm{i}\frac{\gamma\boldsymbol{\mathscr{V}}^\top_i\boldsymbol
{\mathscr{R}}\boldsymbol{\mathscr{V}}_i}{\sqrt{\rho}\tau^2}\omega\delta}=\mathcal{O}(\delta^{-\frac12})
\end{align}
as $\delta\rightarrow0^{+}$, where the definitions of $\beta_i$ are given in Theorem \ref{theoasy}.
\end{lemm}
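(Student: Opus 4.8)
The plan is to reduce the claim to a uniform lower bound on the leading quadratic polynomial appearing in the denominator. First I would invoke Theorem~\ref{theoasy} in the form of the expansion \eqref{Omega:delta}, together with the symmetry $\omega^-_i(\delta^{1/2})=-\overline{\omega^+_i}(\delta^{1/2})$ (cf.\ Remark~\ref{resonance}), and abbreviate the two constants that occur: set $a_i:=\sqrt{\lambda_i/(\rho\tau^2)}$, which is strictly positive because $\boldsymbol{\mathscr{Q}}$ is positive definite (Lemma~\ref{Q:symmetric}), and $b_i:=\gamma\boldsymbol{\mathscr{V}}^\top_i\boldsymbol{\mathscr{R}}\boldsymbol{\mathscr{V}}_i/(2\sqrt{\rho}\tau^2)$, which is strictly positive by Assumption~\eqref{a:imaginary}. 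Then $\omega^+_i=a_i\delta^{1/2}-\mathrm{i}b_i\delta+\mathcal{O}(\delta^{1+1/(2\beta_i)})$ and $\omega^-_i=-a_i\delta^{1/2}-\mathrm{i}b_i\delta+\mathcal{O}(\delta^{1+1/(2\beta_i)})$, whence $\omega^+_i+\omega^-_i=-2\mathrm{i}b_i\delta+\mathcal{O}(\delta^{1+1/(2\beta_i)})$ and $\omega^+_i\omega^-_i=-a_i^2\delta+\mathcal{O}(\delta^{3/2+1/(2\beta_i)})$ (the $b_i^2\delta^2$ contribution is absorbed since $\beta_i\ge1$). Expanding the product $(\omega-\omega^+_i)(\omega-\omega^-_i)=\omega^2-(\omega^+_i+\omega^-_i)\omega+\omega^+_i\omega^-_i$ and using $\omega=\mathcal{O}(\delta^{1/2})$ to discard the term $\mathcal{O}(\delta^{1+1/(2\beta_i)})\omega$, I obtain
\[
(\omega-\omega^+_i)(\omega-\omega^-_i)=P_i(\omega,\delta)+\mathcal{O}(\delta^{3/2+1/(2\beta_i)}),\qquad
P_i(\omega,\delta):=\omega^2-\frac{\lambda_i}{\rho\tau^2}\delta+\mathrm{i}\frac{\gamma\boldsymbol{\mathscr{V}}^\top_i\boldsymbol{\mathscr{R}}\boldsymbol{\mathscr{V}}_i}{\sqrt{\rho}\tau^2}\,\omega\delta .
\]

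The crux is the uniform estimate: there exists $c>0$, independent of $\omega$ and $\delta$, such that $|P_i(\omega,\delta)|\ge c\,\delta^{3/2}$ whenever $\omega\in\mathbb{R}$ with $|\omega|\le C\delta^{1/2}$ and $\delta>0$ is small, where $C$ is the constant in $\omega=\mathcal{O}(\delta^{1/2})$. To prove it I would write $\omega=t\delta^{1/2}$ with $t\in\mathbb{R}$, $|t|\le C$, factor $P_i(\omega,\delta)=\delta\bigl[(t^2-a_i^2)+2\mathrm{i}b_i t\,\delta^{1/2}\bigr]$, and use $|P_i(\omega,\delta)|\ge\delta\max\bigl\{|t^2-a_i^2|,\,2b_i|t|\delta^{1/2}\bigr\}$. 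If $|t|\le a_i/2$ then $|t^2-a_i^2|\ge 3a_i^2/4$, so $|P_i|\ge(3a_i^2/4)\delta$. If $|t|>a_i/2$: either $|t^2-a_i^2|\ge\delta^{1/2}$, giving $|P_i|\ge\delta^{3/2}$; or $|t^2-a_i^2|<\delta^{1/2}$, in which case $t^2\ge a_i^2/2$ for $\delta$ small, so $2b_i|t|\delta^{1/2}\ge\sqrt{2}\,a_i b_i\,\delta^{1/2}$ and $|P_i|\ge\sqrt{2}\,a_i b_i\,\delta^{3/2}$. Taking $c$ to be the minimum of the coefficients so obtained and using $\delta^{3/2}\le\delta$ for small $\delta$ completes the bound; both $a_i>0$ and $b_i>0$ are used here.

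Given this, the two asserted equalities follow at once. Since $\bigl|\mathcal{O}(\delta^{3/2+1/(2\beta_i)})/P_i(\omega,\delta)\bigr|\le c^{-1}\mathcal{O}(\delta^{1/(2\beta_i)})\to0$, the geometric series gives
\[
\frac{\delta}{(\omega-\omega^+_i)(\omega-\omega^-_i)}
=\frac{\delta}{P_i(\omega,\delta)}\cdot\frac{1}{1+\mathcal{O}(\delta^{1/(2\beta_i)})}
=\frac{\delta\bigl(1+\mathcal{O}(\delta^{1/(2\beta_i)})\bigr)}{P_i(\omega,\delta)},
\]
which is the first equality once the explicit form of $P_i$ is inserted. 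Finally $\bigl|\delta/P_i(\omega,\delta)\bigr|\le\delta/(c\delta^{3/2})=c^{-1}\delta^{-1/2}$, and the bounded factor $1+\mathcal{O}(\delta^{1/(2\beta_i)})$ does not change the order, so the whole quantity is $\mathcal{O}(\delta^{-1/2})$.

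I expect the main obstacle to be the uniform lower bound on $|P_i|$. The point is that $\omega$ is allowed to come arbitrarily close to the real parts $\pm a_i\delta^{1/2}$ of the resonant frequencies, where $\Re P_i$ nearly vanishes; there one is forced to rely on $\Im P_i$, whose size is guaranteed only through the strict positivity $\boldsymbol{\mathscr{V}}^\top_i\boldsymbol{\mathscr{R}}\boldsymbol{\mathscr{V}}_i>0$ of Assumption~\eqref{a:imaginary}. Keeping the case analysis uniform in $\omega$ over the entire window $|\omega|\le C\delta^{1/2}$, rather than only near one isolated resonance, is the delicate part; the remainder is bookkeeping with the expansion \eqref{Omega:delta}.
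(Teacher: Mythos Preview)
Your proposal is correct and follows essentially the same route as the paper's proof: expand $(\omega-\omega^+_i)(\omega-\omega^-_i)$ via \eqref{Omega:delta} and the symmetry $\omega^-_i=-\overline{\omega^+_i}$ to isolate the leading polynomial $P_i(\omega,\delta)$, establish a uniform lower bound $|P_i|\gtrsim\delta^{3/2}$, and then absorb the remainder by a geometric series.

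The one substantive difference is in how the lower bound on $|P_i|$ is obtained. The paper outsources this step, invoking \cite[Lemma~4]{Fepponmodal} to get the sharp inequality
\[
\Big|\omega^2-\tfrac{\lambda_i}{\rho\tau^2}\delta+\mathrm{i}\tfrac{\gamma\boldsymbol{\mathscr{V}}^\top_i\boldsymbol{\mathscr{R}}\boldsymbol{\mathscr{V}}_i}{\sqrt{\rho}\tau^2}\omega\delta\Big|
\ge
\tfrac{\gamma\boldsymbol{\mathscr{V}}^\top_i\boldsymbol{\mathscr{R}}\boldsymbol{\mathscr{V}}_i}{\sqrt{\rho}\tau^2}\delta
\sqrt{\tfrac{\lambda_i}{\rho\tau^2}\delta-\Big(\tfrac{\gamma\boldsymbol{\mathscr{V}}^\top_i\boldsymbol{\mathscr{R}}\boldsymbol{\mathscr{V}}_i}{2\sqrt{\rho}\tau^2}\delta\Big)^2},
\]
with equality at the physical resonance value of $\omega^2$; this is a minimization of $|\alpha+\mathrm{i}\beta\omega|$ over real $\omega$ with $\alpha=\omega^2-a_i^2\delta$, $\beta=2b_i\delta$. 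Your dichotomy on $t=\omega\delta^{-1/2}$ is a self-contained elementary substitute that yields the same $c\delta^{3/2}$ bound (with a less explicit constant). Both approaches use $a_i>0$ (Lemma~\ref{Q:symmetric}) and $b_i>0$ (Assumption~\eqref{a:imaginary}) in exactly the way you describe: the imaginary part rescues the estimate precisely when $\omega$ sits near $\pm a_i\delta^{1/2}$. Your version has the merit of being independent of the external reference; the paper's has the merit of giving the exact minimizer and minimum.
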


\begin{proof}
Observe that
\begin{align*}%\label{xj:calculation}
(\omega-\omega^+_i(\delta^{\frac{1}{2}}))(\omega-\omega^-_i(\delta^{\frac12}))\stackrel{\eqref{-reson}}{=}\omega^2-|\omega^{\pm}_i(\delta^{\frac{1}{2}})|^2
-2\mathrm{i}\omega\Im(\omega^{\pm}_i(\delta^{\frac{1}{2}})).
\end{align*}
It follows from \eqref{Omega:delta} that
\begin{align*}
-|\omega^{\pm}_i(\delta^{\frac{1}{2}})|^2&=-\frac{\lambda_i}{\rho\tau^2}\delta+\mathcal{O}(\delta^{\frac{3}{2}+\frac{1}{2\beta_i}}),\\
-2\mathrm{i}\omega\Im(\omega^{\pm}_i(\delta^{\frac{1}{2}}))&=\mathrm{i}\frac{\gamma\boldsymbol{\mathscr{V}}^\top_i\boldsymbol
{\mathscr{R}}\boldsymbol{\mathscr{V}}_i}{\sqrt{\rho}\tau^2}\omega\delta+\mathcal{O}(\omega\delta^{1+\frac{1}{2\beta_i}}).
\end{align*}
Furthermore, using \cite[Lemma 4]{Fepponmodal}, we obtain
\begin{align*}
\bigg|\omega^2-\frac{\lambda_i}{\rho\tau^2}\delta+\mathrm{i}\frac{\gamma\boldsymbol{\mathscr{V}}^\top_i\boldsymbol
{\mathscr{R}}\boldsymbol{\mathscr{V}}_i}{\sqrt{\rho}\tau^2}\omega\delta\bigg|\geq\frac{\gamma\boldsymbol{\mathscr{V}}^\top_i\boldsymbol {\mathscr{R}}\boldsymbol{\mathscr{V}}_i\delta}{\sqrt{\rho}\tau^2}\sqrt{\frac{\lambda_i\delta}{\rho\tau^2}-\bigg(\frac{\gamma\boldsymbol{\mathscr{V}}^\top_i\boldsymbol
{\mathscr{R}}\boldsymbol{\mathscr{V}}_i\delta}{2\sqrt{\rho}\tau^2}\bigg)^2},
\end{align*}
where the equal sign is taken when $\omega^2=\frac{\lambda_i\delta}{\rho\tau^2}-2\Big(\frac{\gamma\boldsymbol{\mathscr{V}}^\top_i\boldsymbol
{\mathscr{R}}\boldsymbol{\mathscr{V}}_i\delta}{2\sqrt{\rho}\tau^2}\Big)^2$. Consequently, for $\omega\in\mathbb{R}$ with $\omega=\mathcal{O}(\delta^{\frac{1}{2}})$, we have
\begin{align*}%\label{error}
\frac{\delta}{(\omega-\omega^+_i(\delta^{\frac{1}{2}}))(\omega-\omega^-_i(\delta^{\frac{1}{2}}))}
&=\frac{\delta}{\Big(\omega^2-\frac{\lambda_i}{\rho\tau^2}\delta+\mathrm{i}\frac{\gamma\boldsymbol{\mathscr{V}}^\top_i\boldsymbol {\mathscr{R}}\boldsymbol{\mathscr{V}}_i}{\sqrt{\rho}\tau^2}\omega\delta\Big)\Big(1+\mathcal{O}(\delta^{\frac{1}{2\beta_i}})\Big)}\nonumber\\
&=\frac{\delta(1+\mathcal{O}(\delta^{\frac{1}{2\beta_i}}))}{\omega^2
-\frac{\lambda_i}{\rho\tau^2}\delta+\mathrm{i}\frac{\gamma\boldsymbol{\mathscr{V}}^\top_i\boldsymbol
{\mathscr{R}}\boldsymbol{\mathscr{V}}_i}{\sqrt{\rho}\tau^2}\omega\delta},
\end{align*}
which completes the proof.
\end{proof}

The following result provides the asymptotic expansion of $\boldsymbol{s}(\omega,\delta)$ with respect to $\omega$ and $\delta$.

\begin{prop}\label{prop:si}
Let $\omega\in\mathbb{R}$ and  $\delta\in\mathbb{R}^+$ satisfy $\omega=\mathcal{O}(\delta^{\frac{1}{2}})$. Under the assumption \eqref{a:imaginary}, for every $1\leq i\leq 6$, the $i$-th component $s_i(\omega,\delta)$ of $\boldsymbol{s}(\omega,\delta)$  has the following asymptotic expansion as $\delta\rightarrow 0^+$:
\begin{align}\label{first coefficient}
s_i(\omega,\delta)=\sum^6_{j=1}\frac{\delta \mathscr{V}_{ji}(\boldsymbol{\mathscr{V}}^{\top}\boldsymbol{\mathscr{C}}^\top\boldsymbol{d})_j(1+\mathcal{O}(\delta^{\frac{1}{2\beta_{j}}}))}
{-\rho\tau^2\omega^2+\lambda_j\delta-\mathrm{i}\gamma\sqrt{\rho}\boldsymbol{\mathscr{V}}^\top_j\boldsymbol{\mathscr{R}}
\boldsymbol{\mathscr{V}}_j\omega\delta}+\mathcal{O}(1),
\end{align}
where $(\boldsymbol{\mathscr{V}}^{\top}\boldsymbol{\mathscr{C}}^\top\boldsymbol{d})_j$ denotes the $j$-th component of the vector $\boldsymbol{\mathscr{V}}^{\top}\boldsymbol{\mathscr{C}}^\top\boldsymbol{d}$.
\end{prop}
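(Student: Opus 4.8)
The starting point is the linear system \eqref{matrix}, namely $(\boldsymbol{\mathscr{I}}-\boldsymbol{\mathscr{A}}(\omega,\delta))\boldsymbol{s}(\omega,\delta)=\boldsymbol{g}(\omega,\delta)$, together with the expansion \eqref{I-B} of $\boldsymbol{\mathscr{I}}-\boldsymbol{\mathscr{A}}(\omega,\delta)$ and the expansion \eqref{Fasy} of $\boldsymbol{g}(\omega,\delta)$. The plan is to diagonalize the leading part of $\boldsymbol{\mathscr{I}}-\boldsymbol{\mathscr{A}}(\omega,\delta)$ using the orthogonal matrix $\boldsymbol{\mathscr{V}}$ that diagonalizes $\boldsymbol{\mathscr{Q}}$, solve the resulting near-diagonal system componentwise, and then transform back. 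Concretely, set $\boldsymbol{t}(\omega,\delta):=\boldsymbol{\mathscr{V}}^\top\boldsymbol{s}(\omega,\delta)$; then by \eqref{I-B},
\begin{align*}
\boldsymbol{\mathscr{V}}^\top(\boldsymbol{\mathscr{I}}-\boldsymbol{\mathscr{A}}(\omega,\delta))\boldsymbol{\mathscr{V}}
=\operatorname{diag}\bigl(-\rho\tau^2\omega^2+\delta\lambda_j\bigr)_{j=1}^6
-\mathrm{i}\omega\delta\sqrt{\rho}\gamma\,\boldsymbol{\mathscr{V}}^\top\boldsymbol{\mathscr{R}}\boldsymbol{\mathscr{V}}
+\mathcal{O}((\omega^2+\delta)^2),
\end{align*}
so the transformed system reads $\boldsymbol{\mathscr{V}}^\top(\boldsymbol{\mathscr{I}}-\boldsymbol{\mathscr{A}})\boldsymbol{\mathscr{V}}\,\boldsymbol{t}=\boldsymbol{\mathscr{V}}^\top\boldsymbol{g}=\delta\,\boldsymbol{\mathscr{V}}^\top\boldsymbol{\mathscr{C}}^\top\boldsymbol{d}+\mathcal{O}(\omega\delta+\delta^2)$.

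The key step is to estimate the inverse of this transformed matrix. On the $j$-th diagonal entry we have $-\rho\tau^2\omega^2+\lambda_j\delta-\mathrm{i}\omega\delta\sqrt{\rho}\gamma\,\boldsymbol{\mathscr{V}}_j^\top\boldsymbol{\mathscr{R}}\boldsymbol{\mathscr{V}}_j$ (the off-diagonal contributions of $\boldsymbol{\mathscr{V}}^\top\boldsymbol{\mathscr{R}}\boldsymbol{\mathscr{V}}$ carry an extra factor $\omega\delta=\mathcal{O}(\delta^{3/2})$ and are absorbable into the error once we know the size of the diagonal entries). By Lemma \ref{le:enhan} — applied with $\lambda_j$ in place of $\lambda_i$, noting that each distinct eigenvalue $\lambda^{\mathrm w}$ of $\boldsymbol{\mathscr{Q}}$ contributes the same diagonal factor on its eigenspace and the relevant $\beta$ is $\beta_j$ — the $j$-th diagonal entry is bounded below in modulus by a quantity of order $\delta^{3/2}$, hence its reciprocal is $\mathcal{O}(\delta^{-3/2})$, and moreover
\begin{align*}
\frac{1}{-\rho\tau^2\omega^2+\lambda_j\delta-\mathrm{i}\omega\delta\sqrt{\rho}\gamma\,\boldsymbol{\mathscr{V}}_j^\top\boldsymbol{\mathscr{R}}\boldsymbol{\mathscr{V}}_j}
=\frac{1+\mathcal{O}(\delta^{\frac{1}{2\beta_j}})}{-\rho\tau^2\omega^2+\lambda_j\delta-\mathrm{i}\gamma\sqrt{\rho}\,\boldsymbol{\mathscr{V}}_j^\top\boldsymbol{\mathscr{R}}\boldsymbol{\mathscr{V}}_j\,\omega\delta}.
\end{align*}
A Neumann-series argument then shows that $(\boldsymbol{\mathscr{V}}^\top(\boldsymbol{\mathscr{I}}-\boldsymbol{\mathscr{A}})\boldsymbol{\mathscr{V}})^{-1}$ equals the inverse of its diagonal part plus a correction that is a constant (i.e. $\mathcal{O}(\delta^0)$) order term when multiplied against the right-hand side $\delta\,\boldsymbol{\mathscr{V}}^\top\boldsymbol{\mathscr{C}}^\top\boldsymbol{d}$: indeed, the off-diagonal/higher-order part of the matrix is $\mathcal{O}((\omega^2+\delta)^2)=\mathcal{O}(\delta^2)$, and multiplying the diagonal inverse ($\mathcal{O}(\delta^{-3/2})$) by this and then iterating against $\delta\cdot\mathcal{O}(1)$ gives contributions of order $\delta\cdot\delta^{-3/2}\cdot\delta^2\cdot\delta^{-3/2}=\mathcal{O}(\delta^0)$, which is exactly the $\mathcal{O}(1)$ remainder claimed. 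Reading off the $i$-th component via $s_i=\sum_j\mathscr{V}_{ij}t_j=\sum_j\mathscr{V}_{ij}\,[(\text{diagonal inverse})_j\cdot\delta(\boldsymbol{\mathscr{V}}^\top\boldsymbol{\mathscr{C}}^\top\boldsymbol{d})_j]+\mathcal{O}(1)$ and using $\mathscr{V}_{ij}=\mathscr{V}_{ji}$ (since $\boldsymbol{\mathscr{V}}$ is real orthogonal, but here one should be careful to match the index convention in \eqref{first coefficient}, where $\mathscr{V}_{ji}$ appears) yields precisely \eqref{first coefficient}.

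The main obstacle I anticipate is bookkeeping the error terms through the matrix inversion when $\boldsymbol{\mathscr{Q}}$ has repeated eigenvalues: on an eigenspace of dimension $>1$, the transformed leading matrix is only block-diagonal up to the choice of basis, and one must make sure that the perturbation $-\mathrm{i}\omega\delta\sqrt{\rho}\gamma\,\boldsymbol{\mathscr{V}}^\top\boldsymbol{\mathscr{R}}\boldsymbol{\mathscr{V}}$ restricted to that block does not create cross-terms that are larger than the claimed $\mathcal{O}(1)$ remainder — this is where Assumption \eqref{a:imaginary} (ensuring $\boldsymbol{\mathscr{V}}_j^\top\boldsymbol{\mathscr{R}}\boldsymbol{\mathscr{V}}_j>0$, so the diagonal regularization is genuinely of order $\omega\delta$ and not smaller) and the precise lower bound from Lemma \ref{le:enhan} are essential. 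A secondary, purely cosmetic point is reconciling the index placement $\mathscr{V}_{ji}$ versus $\mathscr{V}_{ij}$ in the final formula, which is harmless because $\boldsymbol{\mathscr{V}}$ is real; one simply tracks whether the eigenvectors are stored as columns or rows consistently with \eqref{eigen:v}.
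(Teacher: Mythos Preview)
Your approach differs from the paper's in a way that leaves a real gap. The paper does not diagonalize with the fixed orthogonal matrix $\boldsymbol{\mathscr{V}}$; it uses the $\delta$-dependent matrix $\boldsymbol{\mathscr{V}}(\delta^{1/2})=(\boldsymbol{\mathscr{V}}_1(\delta^{1/2}),\dots,\boldsymbol{\mathscr{V}}_6(\delta^{1/2}))$ of perturbed eigenvectors from \eqref{eq:eigenvector}, so that the transformed matrix is exactly $\operatorname{diag}\bigl(-\rho\tau^2(\omega-\omega_i^+(\delta^{1/2}))(\omega-\omega_i^-(\delta^{1/2}))\bigr)_{i=1}^6$. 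One then reads off $\widetilde{s}_i$ componentwise and applies Lemma~\ref{le:enhan} directly; no Neumann series is needed.

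In your argument the Neumann-series step is where the problem lies. You assert that the ``off-diagonal/higher-order part'' is $\mathcal{O}((\omega^2+\delta)^2)=\mathcal{O}(\delta^2)$ and compute the first correction as $\delta\cdot\delta^{-3/2}\cdot\delta^2\cdot\delta^{-3/2}=\mathcal{O}(1)$; but this drops the off-diagonal entries of $-\mathrm{i}\omega\delta\sqrt{\rho}\gamma\,\boldsymbol{\mathscr{V}}^\top\boldsymbol{\mathscr{R}}\boldsymbol{\mathscr{V}}$, which you yourself noted are only $\mathcal{O}(\delta^{3/2})$. With $\delta^{3/2}$ in place of $\delta^2$ your power count gives $\mathcal{O}(\delta^{-1/2})$, and more fundamentally $\|D^{-1}N\|=\mathcal{O}(1)$ rather than $o(1)$, so the series does not obviously converge. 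When the eigenvalues of $\boldsymbol{\mathscr{Q}}$ are simple this can be repaired by a finer argument (at most one diagonal entry is as small as $\delta^{3/2}$, the others are $\asymp\delta$), but when an eigenvalue is repeated several diagonal entries vanish simultaneously at leading order and the $\mathcal{O}(\delta^{3/2})$ off-diagonal couplings within that block are the same size as the diagonal regularization---you flag this as the main obstacle but do not resolve it. Passing to the perturbed eigenbasis $\boldsymbol{\mathscr{V}}(\delta^{1/2})$ is exactly what removes this intra-block coupling, and is the missing ingredient. A minor point: ``$\mathscr{V}_{ij}=\mathscr{V}_{ji}$ since $\boldsymbol{\mathscr{V}}$ is real orthogonal'' is false (orthogonal $\neq$ symmetric); with the paper's convention $\boldsymbol{\mathscr{V}}_j=(\mathscr{V}_{jk})_{k=1}^6$ one has $(\boldsymbol{\mathscr{V}})_{ij}=\mathscr{V}_{ji}$, so $s_i=\sum_j\mathscr{V}_{ji}t_j$ follows directly from the index convention, not from symmetry.
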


\begin{proof}
We define an invertible $6\times6$  matrix $\boldsymbol{\mathscr{V}}(\delta^{\frac{1}{2}})$ as
\begin{align*}%\label{matrix:V}
\boldsymbol{\mathscr{V}}(\delta^{\frac{1}{2}})
:=(\boldsymbol{\mathscr{V}}_{1}(\delta^{\frac{1}{2}}),\cdots,\boldsymbol{\mathscr{V}}_6(\delta^{\frac{1}{2}})),
\end{align*}
where $\boldsymbol{\mathscr{V}}_{i}(\delta^{\frac{1}{2}})$ are given in \eqref{eq:eigenvector}. We introduce the change of variable
\begin{align*}
\widetilde{\boldsymbol{s}}(\omega,\delta):=\boldsymbol{\mathscr{V}}(\delta^{\frac{1}{2}})^{\top}\boldsymbol{s}(\omega,\delta),
\end{align*}
which leads to  $\boldsymbol{s}(\omega,\delta)=\boldsymbol{\mathscr{V}}(\delta^{\frac{1}{2}})\widetilde{\boldsymbol{s}}(\omega,\delta)$.

It follows from \eqref{matrix}, \eqref{I-B}, and \eqref{Fasy} that
\begin{align*}
\mathrm{diag}\big(-\rho\tau^2(\omega-\omega^+_i(\delta^{\frac{1}{2}}))
(\omega-\omega^-_i(\delta^{\frac{1}{2}}))\big)^6_{i=1}\widetilde{\boldsymbol{s}}(\omega,\delta)
&=\boldsymbol{\mathscr{V}}(\delta^{\frac{1}{2}})^{\top}\boldsymbol{g}(\omega,\delta)\\
&=\delta\boldsymbol{\mathscr{V}}^{\top}\boldsymbol{\mathscr{C}}^\top\boldsymbol{d}+\mathcal{O}(\delta^{\frac32})+
\mathcal{O}(\omega\delta).
\end{align*}
Then, for $\omega\in\mathbb{R}$, we have
\begin{align*}
\widetilde{\boldsymbol{s}}(\omega,\delta)=
\mathrm{diag}\Bigg(\frac{\delta}{-\rho\tau^2(\omega-\omega^+_i(\delta^{\frac{1}{2}}))
(\omega-\omega^-_i(\delta^{\frac{1}{2}}))}\Bigg)^6_{i=1}\big(\boldsymbol{\mathscr{V}}^{\top}\boldsymbol{\mathscr{C}}^\top\boldsymbol{d}
+\mathcal{O}(\delta^{\frac12})+
\mathcal{O}(\omega)\big).
\end{align*}
Combining the above equation with \eqref{error}, we deduce \eqref{first coefficient} for $\omega\in\mathbb{R}$ with $\omega=\mathcal{O}(\delta^{\frac{1}{2}})$.
\end{proof}

Based on Propositions \ref{wi:expansion}, \ref{prop:wf}, and \ref{prop:si}, we derive the asymptotic expansion for the solution to the interior problem, as stated in the following theorem.

\begin{theo}\label{inter u asy}
Let $\omega\in\mathbb{R}$ and  $\delta\in\mathbb{R}^+$ satisfy $\omega=\mathcal{O}(\delta^{\frac{1}{2}})$. Under the assumption \eqref{a:imaginary}, the solution $\boldsymbol{u}(\omega,\delta)$ to the interior problem \eqref{int:solu}   admits the following asymptotic expansion as $\delta\rightarrow 0^+$:
\begin{align}\label{u:asymptotic}
\boldsymbol{u}(\omega,\delta)=\chi_D\sum^6_{i,j=1}\frac{\delta \mathscr{V}_{ji}(\boldsymbol{\mathscr{V}}^{\top}\boldsymbol{\mathscr{C}}^\top\boldsymbol{d})_j(1+\mathcal{O}(\delta^{\frac{1}{2\beta_{j}}}))}
{-\rho\tau^2\omega^2+\lambda_j\delta-\mathrm{i}\gamma\sqrt{\rho}\boldsymbol{\mathscr{V}}^\top_j\boldsymbol{\mathscr{R}}
\boldsymbol{\mathscr{V}}_j\omega\delta}\boldsymbol{\xi}_i+\mathcal{O}(1),
\end{align}
where $\beta_j$ are given in Theorem \ref{theoasy}. Moreover, the resonance enhancement coefficients of the field $\boldsymbol{u}(\omega,\delta)$ in the domain $D$ are of order $\mathcal{O}(\delta^{-\frac{1}{2}})$.
\end{theo}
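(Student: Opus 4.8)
The plan is to build directly on the representation \eqref{solu:express}, so the first task is to verify that its hypothesis \eqref{E:unique} is satisfied throughout the regime in question. Since $\omega\in\mathbb{R}$ and $\omega=\mathcal{O}(\delta^{1/2})$, I would invoke Theorem \ref{theoasy} together with Assumption \eqref{a:imaginary}: every characteristic value $\omega^{\pm}_i(\delta^{1/2})$ of $\boldsymbol{\mathscr{I}}-\boldsymbol{\mathscr{A}}(\omega,\delta)$ has an imaginary part of size $\mathcal{O}(\delta)$ with a strictly nonzero principal coefficient, hence no real $\omega$ of order $\delta^{1/2}$ can be a zero of $\det(\boldsymbol{\mathscr{I}}-\boldsymbol{\mathscr{A}}(\omega,\delta))$ once $\delta$ is small. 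Thus \eqref{E:unique} holds and $\boldsymbol{u}(\omega,\delta)=\sum_{j=1}^6 s_j(\omega,\delta)\boldsymbol{w}_j(\omega,\delta)+\boldsymbol{w}_{\boldsymbol f}(\omega,\delta)$ is legitimate.

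Next I would feed the three available asymptotic results into this sum. By Proposition \ref{wi:expansion}, $\boldsymbol{w}_j(\omega,\delta)=\chi_D\boldsymbol{\xi}_j+\mathcal{O}(\omega^2+\delta)=\chi_D\boldsymbol{\xi}_j+\mathcal{O}(\delta)$ since $\omega^2=\mathcal{O}(\delta)$; by Proposition \ref{prop:wf}, $\boldsymbol{w}_{\boldsymbol f}(\omega,\delta)=\mathcal{O}(\delta)=\mathcal{O}(1)$; and by Proposition \ref{prop:si} together with Lemma \ref{le:enhan}, $s_j(\omega,\delta)$ equals the explicit resonant leading term from \eqref{first coefficient} plus $\mathcal{O}(1)$, the leading term itself being $\mathcal{O}(\delta^{-1/2})$. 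Hence $s_j\boldsymbol{w}_j=s_j\chi_D\boldsymbol{\xi}_j+s_j\,\mathcal{O}(\delta)=s_j\chi_D\boldsymbol{\xi}_j+\mathcal{O}(\delta^{-1/2})\,\mathcal{O}(\delta)=s_j\chi_D\boldsymbol{\xi}_j+\mathcal{O}(\delta^{1/2})$. Summing over $j$ and absorbing $\boldsymbol{w}_{\boldsymbol f}$ and all $\mathcal{O}(1)$, $\mathcal{O}(\delta^{1/2})$ contributions gives $\boldsymbol{u}(\omega,\delta)=\chi_D\sum_{i=1}^6 s_i(\omega,\delta)\boldsymbol{\xi}_i+\mathcal{O}(1)$; substituting \eqref{first coefficient} for $s_i$ and again absorbing its $\mathcal{O}(1)$ part into the global remainder yields exactly \eqref{u:asymptotic}.

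For the enhancement claim, I would observe that the denominator $-\rho\tau^2\omega^2+\lambda_j\delta-\mathrm{i}\gamma\sqrt{\rho}\boldsymbol{\mathscr{V}}^\top_j\boldsymbol{\mathscr{R}}\boldsymbol{\mathscr{V}}_j\omega\delta$ appearing in \eqref{u:asymptotic} agrees, up to the factor $-\rho\tau^2$ and a $(1+\mathcal{O}(\delta^{1/(2\beta_j)}))$ correction, with $(\omega-\omega^+_j(\delta^{1/2}))(\omega-\omega^-_j(\delta^{1/2}))$ by \eqref{Omega:delta}; Lemma \ref{le:enhan} then shows each coefficient of $\boldsymbol{\xi}_i$ in \eqref{u:asymptotic} is of order $\mathcal{O}(\delta^{-1/2})$, which is the asserted enhancement. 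The delicate point to handle carefully is the error bookkeeping: one must check that the resonantly large factor $s_j=\mathcal{O}(\delta^{-1/2})$ multiplied by the $\mathcal{O}(\delta)$ perturbation of $\boldsymbol{w}_j$ only generates a negligible $\mathcal{O}(\delta^{1/2})$ contribution, and that no cancellation among the leading $\chi_D\boldsymbol{\xi}_i$ terms occurs — which holds because $\{\boldsymbol{\xi}_i\}$ is orthonormal, so the diagonal leading structure is preserved; equally, Assumption \eqref{a:imaginary} is precisely what keeps the denominators in \eqref{first coefficient} bounded away from zero uniformly for real $\omega=\mathcal{O}(\delta^{1/2})$, so every step above is valid throughout the regime considered.
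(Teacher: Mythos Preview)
Your proposal is correct and follows essentially the same route as the paper: insert the representation \eqref{solu:express} and combine the asymptotics from \eqref{wjasy}, \eqref{wfasy}, \eqref{first coefficient}, and \eqref{error}, then read off the $\mathcal{O}(\delta^{-1/2})$ enhancement. You are in fact more explicit than the paper in verifying that \eqref{E:unique} holds for real $\omega=\mathcal{O}(\delta^{1/2})$ under Assumption \eqref{a:imaginary}, and in tracking that the $\mathcal{O}(\delta^{-1/2})\cdot\mathcal{O}(\delta)$ cross-term is $\mathcal{O}(\delta^{1/2})$; the paper simply absorbs these steps into a two-line computation.
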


\begin{proof}
From \eqref{solu:express}, \eqref{wjasy}, \eqref{wfasy}, \eqref{first coefficient}, and \eqref{error}, it follows that
\begin{align*}
\boldsymbol{u}(\omega,\delta)
&=\sum^6_{i,j=1}\frac{\delta \mathscr{V}_{ji}(\boldsymbol{\mathscr{V}}^{\top}\boldsymbol{\mathscr{C}}^\top\boldsymbol{d})_j(1+\mathcal{O}(\delta^{\frac{1}{2\beta_{j}}}))}
{-\rho\tau^2\omega^2+\lambda_j\delta-\mathrm{i}\gamma\sqrt{\rho}\boldsymbol{\mathscr{V}}^\top_j\boldsymbol{\mathscr{R}}
\boldsymbol{\mathscr{V}}_j\omega\delta}\left(\chi_D\boldsymbol{\xi}_i+\mathcal{O}(\delta)\right)+\mathcal{O}(\delta)\\
&=\chi_D\sum^6_{i,j=1}\frac{\delta \mathscr{V}_{ji}(\boldsymbol{\mathscr{V}}^{\top}\boldsymbol{\mathscr{C}}^\top\boldsymbol{d})_j(1+\mathcal{O}(\delta^{\frac{1}{2\beta_{j}}}))}
{-\rho\tau^2\omega^2+\lambda_j\delta-\mathrm{i}\gamma\sqrt{\rho}\boldsymbol{\mathscr{V}}^\top_j\boldsymbol{\mathscr{R}}
\boldsymbol{\mathscr{V}}_j\omega\delta}\boldsymbol{\xi}_i+\mathcal{O}(1),
\end{align*}
which demonstrates that the enhancement coefficients are of order $\mathcal{O}(\delta^{-\frac{1}{2}})$.
\end{proof}

\begin{rema}
In fact, using Lemma \ref{le:enhan}, the enhancement coefficients of the resonances associated with the field $\boldsymbol{u}(\omega,\delta)$ in the domain $D$ are governed by the imaginary parts of the resonant frequencies.
\end{rema}

\subsection{Far-field patterns}

In this subsection, we present the representation of the solution to the exterior elastic problem in the far-field, based on \eqref{ext:solu} and \eqref{u:asymptotic}.

We begin by recalling an important property of the Kupradze matrix $\boldsymbol{\Gamma}^{k}(\boldsymbol x-\boldsymbol y)$, as discussed in \cite{DURGAextraction}.

\begin{lemm}
As $|\boldsymbol x|\rightarrow +\infty$ and $k\rightarrow0$,  the Kupradze matrix $\boldsymbol{\Gamma}^{k}(\boldsymbol x-\boldsymbol y)$ for any $\boldsymbol{y}\in\partial D$ has the following asymptotic behavior:
\begin{align}\label{fun solu far}
\boldsymbol\Gamma^{k}(\boldsymbol x-\boldsymbol y)&=-\frac{e^{\mathrm{i}\frac{k}{\sqrt{\mu}}|\boldsymbol x|}}{4\pi\mu|\boldsymbol x|}(\mathbf{I}-\widehat{\boldsymbol x}\widehat{\boldsymbol x}^\top)e^{-\mathrm{i} \frac{k}{\sqrt{\mu}}\widehat{\boldsymbol x}\cdot \boldsymbol y}-\frac{e^{\mathrm{i}\frac{k}{\sqrt{\lambda+2\mu}}|\boldsymbol x|}}{4\pi(\lambda+2\mu)|\boldsymbol x|}\widehat{\boldsymbol x}\widehat{\boldsymbol x}^\top e^{-\mathrm{i} \frac{k}{\sqrt{\lambda+2\mu}}\widehat{\boldsymbol x}\cdot \boldsymbol y}+\mathcal{O}(|\boldsymbol x|^{-2}).
\end{align}
\end{lemm}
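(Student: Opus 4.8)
The plan is to derive \eqref{fun solu far} directly from the closed-form expression for the Kupradze matrix recorded in Section~\ref{sec:2}, by replacing $\boldsymbol x$ with $\boldsymbol x-\boldsymbol y$ and expanding each of its two constituent pieces in powers of $|\boldsymbol x|^{-1}$. Write $\kappa_s:=k/\sqrt{\mu}$ and $\kappa_p:=k/\sqrt{\lambda+2\mu}$ for brevity. I would use the standard far-field identities
\[
|\boldsymbol x-\boldsymbol y|=|\boldsymbol x|-\widehat{\boldsymbol x}\cdot\boldsymbol y+\mathcal{O}(|\boldsymbol x|^{-1}),\qquad \frac{1}{|\boldsymbol x-\boldsymbol y|}=\frac{1}{|\boldsymbol x|}+\mathcal{O}(|\boldsymbol x|^{-2}),\qquad \frac{\boldsymbol x-\boldsymbol y}{|\boldsymbol x-\boldsymbol y|}=\widehat{\boldsymbol x}+\mathcal{O}(|\boldsymbol x|^{-1}),
\]
which hold uniformly for $\boldsymbol y$ in the compact set $\partial D$. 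Applied to the first term $-\tfrac{e^{\mathrm{i}\kappa_s|\boldsymbol x-\boldsymbol y|}}{4\pi\mu|\boldsymbol x-\boldsymbol y|}\mathbf{I}$, these yield $-\tfrac{e^{\mathrm{i}\kappa_s|\boldsymbol x|}}{4\pi\mu|\boldsymbol x|}e^{-\mathrm{i}\kappa_s\widehat{\boldsymbol x}\cdot\boldsymbol y}\mathbf{I}+\mathcal{O}(|\boldsymbol x|^{-2})$, i.e.\ the shear contribution before the projection correction is incorporated.

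The core of the argument is the second term $\tfrac{1}{4\pi k^{2}}\nabla\nabla\big((e^{\mathrm{i}\kappa_p|\boldsymbol x-\boldsymbol y|}-e^{\mathrm{i}\kappa_s|\boldsymbol x-\boldsymbol y|})/|\boldsymbol x-\boldsymbol y|\big)$. For a radial function $f(r)$ one has $\partial_i\partial_j f(r)=f''(r)\,\widehat{x}_i\widehat{x}_j+\tfrac{f'(r)}{r}(\delta_{ij}-\widehat{x}_i\widehat{x}_j)$; taking $f(r)=e^{\mathrm{i}\alpha r}/r$ gives $f''(r)=-\alpha^{2}e^{\mathrm{i}\alpha r}/r+\mathcal{O}(r^{-2})$ and $f'(r)/r=\mathcal{O}(r^{-2})$, hence $\nabla\nabla(e^{\mathrm{i}\alpha r}/r)=-\alpha^{2}(e^{\mathrm{i}\alpha r}/r)\,\widehat{\boldsymbol x}\widehat{\boldsymbol x}^{\top}+\mathcal{O}(r^{-2})$. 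I would apply this with $\alpha=\kappa_p$ and $\alpha=\kappa_s$, then substitute the far-field expansions of $r=|\boldsymbol x-\boldsymbol y|$ and of the direction vector above, and use $\kappa_p^{2}/k^{2}=1/(\lambda+2\mu)$ and $\kappa_s^{2}/k^{2}=1/\mu$. This reduces the second term to $-\tfrac{e^{\mathrm{i}\kappa_p|\boldsymbol x|}}{4\pi(\lambda+2\mu)|\boldsymbol x|}e^{-\mathrm{i}\kappa_p\widehat{\boldsymbol x}\cdot\boldsymbol y}\widehat{\boldsymbol x}\widehat{\boldsymbol x}^{\top}+\tfrac{e^{\mathrm{i}\kappa_s|\boldsymbol x|}}{4\pi\mu|\boldsymbol x|}e^{-\mathrm{i}\kappa_s\widehat{\boldsymbol x}\cdot\boldsymbol y}\widehat{\boldsymbol x}\widehat{\boldsymbol x}^{\top}+\mathcal{O}(|\boldsymbol x|^{-2})$. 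Adding the first term, the two $\tfrac{1}{\mu}$-contributions combine into $\mathbf{I}-\widehat{\boldsymbol x}\widehat{\boldsymbol x}^{\top}$, which is precisely \eqref{fun solu far}.

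The computation is elementary, and the only point needing genuine care is the bookkeeping of the remainders. Any derivative that falls on the prefactor $1/|\boldsymbol x-\boldsymbol y|$, and any replacement of $(\boldsymbol x-\boldsymbol y)/|\boldsymbol x-\boldsymbol y|$ by $\widehat{\boldsymbol x}$ inside a term that is already $\mathcal{O}(|\boldsymbol x|^{-1})$, costs an extra factor $|\boldsymbol x|^{-1}$ and is therefore absorbed into $\mathcal{O}(|\boldsymbol x|^{-2})$; one should verify this pointwise and note uniformity in $\boldsymbol y\in\partial D$. One should also check that the prefactor $1/k^{2}$ causes no singularity in the displayed leading term: the $\mathcal{O}(k)$ part of $e^{\mathrm{i}\kappa_p r}-e^{\mathrm{i}\kappa_s r}$ equals $\mathrm{i}(\kappa_p-\kappa_s)r$, whose quotient by $r$ is constant and is annihilated by $\nabla\nabla$, consistent with the regular Taylor expansion of $\boldsymbol{\Gamma}^{k}$ in Lemma~\ref{le:series}. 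With these observations, \eqref{fun solu far} follows.
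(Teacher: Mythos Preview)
Your derivation is correct and self-contained; the paper itself does not prove this lemma at all but merely records it as a known property of the Kupradze matrix, citing \cite{DURGAextraction}. What you supply---the far-field expansions of $|\boldsymbol x-\boldsymbol y|$, $|\boldsymbol x-\boldsymbol y|^{-1}$, and $(\boldsymbol x-\boldsymbol y)/|\boldsymbol x-\boldsymbol y|$, together with the Hessian identity $\partial_i\partial_j f(r)=f''(r)\widehat{x}_i\widehat{x}_j+\tfrac{f'(r)}{r}(\delta_{ij}-\widehat{x}_i\widehat{x}_j)$ applied to $f(r)=e^{\mathrm{i}\alpha r}/r$---is exactly the standard route to this asymptotic, and the recombination into the projectors $\mathbf{I}-\widehat{\boldsymbol x}\widehat{\boldsymbol x}^{\top}$ and $\widehat{\boldsymbol x}\widehat{\boldsymbol x}^{\top}$ is handled correctly.

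One remark on the remainder: the implied constant in $\mathcal{O}(|\boldsymbol x|^{-2})$ is not uniform as $k\to 0$. For instance, the subleading piece of $f''(r)$ contributes a term of size $\tfrac{1}{k^{2}r^{2}}\bigl(\kappa_p e^{\mathrm{i}\kappa_p r}-\kappa_s e^{\mathrm{i}\kappa_s r}\bigr)=\mathcal{O}\bigl((k|\boldsymbol x|^{2})^{-1}\bigr)$, and indeed at $k=0$ the leading expression in \eqref{fun solu far} differs from the Kelvin matrix $\boldsymbol{\Gamma}^{0}$ by a term of order $|\boldsymbol x|^{-1}$. This is a feature of the lemma as stated (and of the cited source), not a defect in your argument: the expansion is meant for fixed small $k$ with $|\boldsymbol x|\to\infty$, which is precisely how the paper uses it in the proof of Theorem~\ref{far}.
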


Based on the far-field pattern of the Kupradze matrix, the following result addresses the asymptotic expansions of the far-field patterns associated with the elastic problem \eqref{Lame} (cf. \cite{Carlos2002on}).

\begin{theo}\label{far}
Let $\omega\in\mathbb{R}$ and  $\delta\in\mathbb{R}^+$ satisfy $\omega=\mathcal{O}(\delta^{\frac{1}{2}})$. Then the solution to \eqref{Lame} in the far-field admits the following form as $\delta\rightarrow0^+$:
\begin{align}\label{far solution}
\boldsymbol{u}^{\mathrm{ex}}(\boldsymbol x)-\boldsymbol{u}^{\mathrm{in}}(\boldsymbol x)&=-\frac{e^{\mathrm{i}\frac{\sqrt{\rho}\omega}{\sqrt{\mu}}|\boldsymbol x|}}{|\boldsymbol x|}\boldsymbol{u}_{s,\infty}(\widehat{\boldsymbol x})-\frac{e^{\mathrm{i}\frac{\sqrt{\rho}\omega}{\sqrt{\lambda+2\mu}}|\boldsymbol x|}}{|\boldsymbol x|}\boldsymbol{u}_{p,\infty}(\widehat{\boldsymbol x})+\mathcal{O}(|\boldsymbol x|^{-1})+\mathcal{O}(\delta^{-\frac{1}{2}}|\boldsymbol x|^{-2}),
\end{align}
where
\begin{align*}
\boldsymbol{u}_{s,\infty}(\widehat{\boldsymbol x}):=\frac{1}{4\pi\mu}(\mathbf{I}-\widehat{\boldsymbol x}\widehat{\boldsymbol x}^\top)\sum^6_{i,j=1}\frac{\delta \mathscr{V}_{ji}(\boldsymbol{\mathscr{V}}^{\top}\boldsymbol{\mathscr{C}}^\top\boldsymbol{d})_j}
{-\rho\tau^2\omega^2+\lambda_j\delta-\mathrm{i}\gamma\sqrt{\rho}\boldsymbol{\mathscr{V}}^\top_j\boldsymbol{\mathscr{R}}
\boldsymbol{\mathscr{V}}_j\omega\delta}\\
\times \int_{\partial D}\boldsymbol{\mathcal{S}}^{-1}_D[\boldsymbol{\xi}_i](1+\mathcal{O}(\delta^{\frac{1}{2\beta_{j}}}))\mathrm{d}\sigma(\boldsymbol y),\\
\boldsymbol{u}_{p,\infty}(\widehat{\boldsymbol x}):=\frac{1}{4\pi(\lambda+2\mu)}\widehat{\boldsymbol x}\widehat{\boldsymbol x}^\top\sum^6_{i,j=1}\frac{\delta \mathscr{V}_{ji}(\boldsymbol{\mathscr{V}}^{\top}\boldsymbol{\mathscr{C}}^\top\boldsymbol{d})_j}
{-\rho\tau^2\omega^2+\lambda_j\delta-\mathrm{i}\gamma\sqrt{\rho}\boldsymbol{\mathscr{V}}^\top_j\boldsymbol{\mathscr{R}}
\boldsymbol{\mathscr{V}}_j\omega\delta}\\
\times \int_{\partial D}\boldsymbol{\mathcal{S}}^{-1}_D[\boldsymbol{\xi}_i](1+\mathcal{O}(\delta^{\frac{1}{2\beta_{j}}}))\mathrm{d}\sigma(\boldsymbol y).
\end{align*}
are referred to as the transverse and longitudinal far-field patterns, respectively.
\end{theo}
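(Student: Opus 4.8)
The plan is to combine the exterior representation \eqref{ext:solu} with the far-field expansion \eqref{fun solu far} of the Kupradze matrix and then to substitute the interior asymptotics \eqref{u:asymptotic}. Writing the scattered field from \eqref{ext:solu} as a single-layer potential,
\begin{align*}
\boldsymbol{u}^{\mathrm{ex}}(\boldsymbol x)-\boldsymbol{u}^{\mathrm{in}}(\boldsymbol x)=\int_{\partial D}\boldsymbol{\Gamma}^{\sqrt{\rho}\omega}(\boldsymbol x-\boldsymbol y)\boldsymbol{\varphi}(\boldsymbol y)\,\mathrm{d}\sigma(\boldsymbol y),\qquad
\boldsymbol{\varphi}:=(\boldsymbol{\mathcal{S}}^{\sqrt{\rho}\omega}_{D})^{-1}\big[\boldsymbol{u}|_{\partial D}-\boldsymbol{u}^{\mathrm{in}}|_{\partial D}\big],
\end{align*}
I would first insert \eqref{fun solu far} with $k=\sqrt{\rho}\omega$. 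This decomposes the potential into a transverse piece carrying the projector $\mathbf{I}-\widehat{\boldsymbol x}\widehat{\boldsymbol x}^{\top}$ and the phase $e^{\mathrm{i}\sqrt{\rho}\omega|\boldsymbol x|/\sqrt{\mu}}$, a longitudinal piece carrying $\widehat{\boldsymbol x}\widehat{\boldsymbol x}^{\top}$ and the phase $e^{\mathrm{i}\sqrt{\rho}\omega|\boldsymbol x|/\sqrt{\lambda+2\mu}}$, and a remainder $\int_{\partial D}\mathcal{O}(|\boldsymbol x|^{-2})\boldsymbol{\varphi}\,\mathrm{d}\sigma$. Since Theorem \ref{inter u asy} gives $\|\boldsymbol{u}(\omega,\delta)\|_{H^{1}(D)^{3}}=\mathcal{O}(\delta^{-1/2})$, the trace theorem together with the boundedness of $(\boldsymbol{\mathcal{S}}^{\sqrt{\rho}\omega}_{D})^{-1}$ yields $\|\boldsymbol{\varphi}\|_{H^{-1/2}(\partial D)^{3}}=\mathcal{O}(\delta^{-1/2})$, so this remainder is $\mathcal{O}(\delta^{-1/2}|\boldsymbol x|^{-2})$, which is the last error term in \eqref{far solution}.

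Next I would expand the density $\boldsymbol{\varphi}$ itself. By the transmission condition in \eqref{Lame} the trace $\boldsymbol{u}|_{\partial D}$ coincides with the trace of the interior solution, so Theorem \ref{inter u asy} gives, in $H^{1/2}(\partial D)^{3}$,
\begin{align*}
\boldsymbol{u}|_{\partial D}=\sum_{i,j=1}^{6}\sigma_{ij}\,\boldsymbol{\xi}_i\big|_{\partial D}+\mathcal{O}(1),\qquad
\sigma_{ij}:=\frac{\delta\,\mathscr{V}_{ji}(\boldsymbol{\mathscr{V}}^{\top}\boldsymbol{\mathscr{C}}^{\top}\boldsymbol{d})_{j}\big(1+\mathcal{O}(\delta^{1/(2\beta_{j})})\big)}{-\rho\tau^{2}\omega^{2}+\lambda_{j}\delta-\mathrm{i}\gamma\sqrt{\rho}\,\boldsymbol{\mathscr{V}}^{\top}_{j}\boldsymbol{\mathscr{R}}\boldsymbol{\mathscr{V}}_{j}\,\omega\delta}.
\end{align*}
Together with $\boldsymbol{u}^{\mathrm{in}}|_{\partial D}=\boldsymbol{d}+\mathcal{O}(\omega)=\mathcal{O}(1)$, the expansion $(\boldsymbol{\mathcal{S}}^{\sqrt{\rho}\omega}_{D})^{-1}=\boldsymbol{\mathcal{S}}^{-1}_{D}+\mathcal{O}(\omega)$ from \eqref{EIS:series}, and the relation $\omega=\mathcal{O}(\delta^{1/2})$ (so that an $\mathcal{O}(\omega)$ correction acting on the $\mathcal{O}(\delta^{-1/2})$ part is only $\mathcal{O}(1)$), one obtains
\begin{align*}
\boldsymbol{\varphi}=\sum_{i,j=1}^{6}\sigma_{ij}\,\boldsymbol{\mathcal{S}}^{-1}_{D}[\boldsymbol{\xi}_i]+\mathcal{O}(1)\quad\text{in }H^{-1/2}(\partial D)^{3}.
\end{align*}
I would also expand the exponential factors in \eqref{fun solu far} as $e^{-\mathrm{i}\sqrt{\rho}\omega\,\widehat{\boldsymbol x}\cdot\boldsymbol y/\sqrt{\mu}}=1+\mathcal{O}(\delta^{1/2})$ and $e^{-\mathrm{i}\sqrt{\rho}\omega\,\widehat{\boldsymbol x}\cdot\boldsymbol y/\sqrt{\lambda+2\mu}}=1+\mathcal{O}(\delta^{1/2})$, uniformly for $\boldsymbol y\in\partial D$.

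Substituting these expansions into the transverse and longitudinal integrals and grouping by powers of $\delta$ then yields the claim: the genuinely $\mathcal{O}(\delta^{-1/2})$ contributions combine exactly into the $|\boldsymbol x|^{-1}$-leading terms with amplitudes $\boldsymbol{u}_{s,\infty}(\widehat{\boldsymbol x})$ and $\boldsymbol{u}_{p,\infty}(\widehat{\boldsymbol x})$ as displayed (the scalar factors $1+\mathcal{O}(\delta^{1/(2\beta_{j})})$ being absorbed inside the boundary integrals exactly as written there), while every remaining cross term — an $\mathcal{O}(1)$ part of $\boldsymbol{\varphi}$ against $|\boldsymbol x|^{-1}$, or an $\mathcal{O}(\delta^{1/2})$ phase correction against the $\mathcal{O}(\delta^{-1/2})$ resonant part against $|\boldsymbol x|^{-1}$ — contributes only $\mathcal{O}(|\boldsymbol x|^{-1})$. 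The matrices $\boldsymbol{\mathscr{Q}}$, $\boldsymbol{\mathscr{R}}$, $\boldsymbol{\mathscr{C}}$, the eigendata $\lambda_j,\boldsymbol{\mathscr{V}}_j$, the constant $\gamma$ and the exponents $\beta_j$ are inherited verbatim from \eqref{u:asymptotic} and Theorem \ref{theoasy}.

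The main obstacle I anticipate is bookkeeping rather than anything conceptual: one must simultaneously track the resonant enhancement of order $\mathcal{O}(\delta^{-1/2})$ and the three independent sources of $\mathcal{O}(\omega)=\mathcal{O}(\delta^{1/2})$ corrections — from $(\boldsymbol{\mathcal{S}}^{\sqrt{\rho}\omega}_{D})^{-1}$, from the $\mathcal{O}(1)$ remainder in the interior expansion \eqref{u:asymptotic}, and from the two Kupradze phases $e^{-\mathrm{i}\sqrt{\rho}\omega\widehat{\boldsymbol x}\cdot\boldsymbol y/\sqrt{\mu}}$ and $e^{-\mathrm{i}\sqrt{\rho}\omega\widehat{\boldsymbol x}\cdot\boldsymbol y/\sqrt{\lambda+2\mu}}$ — checking in each case that their product with the enhanced part degrades only to $\mathcal{O}(1)$ in the coefficient of $|\boldsymbol x|^{-1}$ and never corrupts the leading far-field amplitudes. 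A subsidiary point requiring care is that the $\mathcal{O}(1)$ remainder in \eqref{u:asymptotic} be controlled in a norm strong enough (such as $H^{1}(D)^{3}$) for the trace theorem and the mapping properties of $\boldsymbol{\mathcal{S}}^{-1}_{D}$ and of the single-layer operator to apply uniformly as $\delta\to0^{+}$.
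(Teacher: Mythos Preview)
Your proposal is correct and follows essentially the same route as the paper's own proof: expand the Kupradze kernel via \eqref{fun solu far}, expand the density $(\boldsymbol{\mathcal{S}}^{\sqrt{\rho}\omega}_{D})^{-1}[\boldsymbol{u}|_{\partial D}-\boldsymbol{u}^{\mathrm{in}}|_{\partial D}]$ using \eqref{EIS:series} and \eqref{u:asymptotic}, and insert both into \eqref{ext:solu}. Your treatment of the error bookkeeping---separately tracking the $\mathcal{O}(\omega)$ corrections from the inverse single-layer, the interior remainder, and the phase factors against the $\mathcal{O}(\delta^{-1/2})$ enhancement---is in fact more explicit than the paper's, which simply absorbs the phase expansion into a combined $\mathcal{O}(\omega|\boldsymbol x|^{-1})+\mathcal{O}(|\boldsymbol x|^{-2})$ remainder for the kernel.
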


\begin{proof}
It is clear from \eqref{fun solu far} that
\begin{align*}
\boldsymbol\Gamma^{\sqrt{\rho}\omega}(\boldsymbol x-\boldsymbol y)&=-\frac{e^{\mathrm{i}\frac{\sqrt{\rho}\omega}{\sqrt{\mu}}|\boldsymbol x|}}{4\pi\mu|\boldsymbol x|}(\mathbf{I}-\widehat{\boldsymbol x}\widehat{\boldsymbol x}^\top)-\frac{e^{\mathrm{i}\frac{\sqrt{\rho}\omega}{\sqrt{\lambda+2\mu}}|\boldsymbol x|}}{4\pi(\lambda+2\mu)|\boldsymbol x|}\widehat{\boldsymbol x}\widehat{\boldsymbol x}^\top+\mathcal{O}(\omega|\boldsymbol x|^{-1})+\mathcal{O}(|\boldsymbol x|^{-2})
\end{align*}
as $|\boldsymbol x|\rightarrow +\infty$ and $k\rightarrow0$, where the last two remainder terms are uniformly bounded for $\boldsymbol{y}\in\partial D$. Furthermore, applying \eqref{EIS:series}, \eqref{eq:compressed}, and \eqref{u:asymptotic}, we have that for $\omega\in\mathbb{R}$, with $\omega=\mathcal{O}(\epsilon^{\frac{1}{2}})$,
\begin{align*}
(\boldsymbol{\mathcal{S}}^{\sqrt{\rho}\omega}_{D})^{-1}[\boldsymbol{u}|_{\partial D}-\boldsymbol{u}^{\mathrm{in}}|_{\partial D}]=\sum^6_{i,j=1}\frac{\delta \mathscr{V}_{ji}(\boldsymbol{\mathscr{V}}^{\top}\boldsymbol{\mathscr{C}}^\top\boldsymbol{d})_j(1+\mathcal{O}(\delta^{\frac{1}{2\beta_{j}}}))}
{-\rho\tau^2\omega^2+\lambda_j\delta-\mathrm{i}\gamma\sqrt{\rho}\boldsymbol{\mathscr{V}}^\top_j\boldsymbol{\mathscr{R}}
\boldsymbol{\mathscr{V}}_j\omega\delta}\boldsymbol{\mathcal{S}}^{-1}_{D}[\boldsymbol{\xi}_i]+\mathcal{O}(1).
\end{align*}
By combining this result with the previous equality and \eqref{ext:solu}, we obtain \eqref{far solution}, thereby completing the proof.
\end{proof}

\section{Concluding remarks}\label{sec:conclusion}

This paper presents a mathematical analysis of subwavelength resonances induced by high contrast elastic media. By introducing the DtN map, we reformulate the scattering problem in $\mathbb{R}^{3}$ as a boundary value problem defined on the bounded domain $D$. We then introduce an auxiliary sesquilinear form that is both bounded and coercive, allowing us to establish necessary and sufficient conditions for the well-posedness of the solution to this boundary value problem. Subwavelength resonances are characterized by the condition that the determinant of a specific matrix vanishes. Utilizing this explicit characterization, along with Gohberg--Sigal theory and Puiseux series expansions for
multi-valued functions, we derive asymptotic expansions for the frequencies at which these resonances occur. The leading terms of these expansions are closely related to the eigenvalues of a real-valued symmetric positive definite matrix. Furthermore, we establish the asymptotic expansion of the field in the domain $D$, where the enhancement coefficients are determined by the imaginary components of the resonant frequencies. We also obtain the far-field pattern, which includes both the transversal and longitudinal components.

To the best of our knowledge, this paper is the first to develop a variational method for analyzing subwavelength resonances in elastic systems, avoiding the need for layer potential techniques. As mentioned in the introduction, this method offers significant flexibility, as it does not rely on the spectra of single-layer potential or Neumann--Poincar\'{e} operator. Our study paves the way for new mathematical investigations into subwavelength resonances associated with the Lam\'{e} system and other scattering problems. In future work, we aim to extend this method to explore scattering problems involving time-modulated or quasi-periodic elastic waves.


\begin{thebibliography}{10}

\bibitem{Carlos2002on}
C.~J.~S. Alves and R.~Kress.
\newblock On the far-field operator in elastic obstacle scattering.
\newblock {\em IMA J. Appl. Math.}, 67(1):1--21, 2002.

\bibitem{ABCF2023}
H.~Ammari, S.~Barandun, J.~Cao, and F.~Feppon.
\newblock Edge modes in subwavelength resonators in one dimension.
\newblock {\em Multiscale Model. Simul.}, 21(3):964--992, 2023.

\bibitem{Ammari:book2015}
H.~Ammari, E.~Bretin, J.~Garnier, H.~Kang, H.~Lee, and A.~Wahab.
\newblock {\em Mathematical methods in elasticity imaging}.
\newblock Princeton University Press, 2015.

\bibitem{AFGLZ2017}
H.~Ammari, B.~Fitzpatrick, D.~Gontier, H.~Lee, and H.~Zhang.
\newblock Sub-wavelength focusing of acoustic waves in bubbly media.
\newblock {\em Proc. A.}, 473(2208):20170469, 17, 2017.

\bibitem{Ammari2018}
H.~Ammari, B.~Fitzpatrick, D.~Gontier, H.~Lee, and H.~Zhang.
\newblock Minnaert resonances for acoustic waves in bubbly media.
\newblock {\em Ann. Inst. H. Poincar\'{e} C Anal. Non Lin\'{e}aire},
  35(7):1975--1998, 2018.

\bibitem{AHY2021}
H.~Ammari, E.~O. Hiltunen, and S.~Yu.
\newblock Subwavelength guided modes for acoustic waves in bubbly crystals with
  a line defect.
\newblock {\em J. Eur. Math. Soc. (JEMS)}, 24(7):2279--2313, 2022.

\bibitem{Lay}
H.~Ammari, H.~Kang, and H.~Lee.
\newblock {\em Layer potential techniques in spectral analysis}.
\newblock Number 153. American Mathematical Soc., 2009.

\bibitem{Libowenmathematical}
H.~Ammari, B.~Li, and J.~Zou.
\newblock Mathematical analysis of electromagnetic scattering by dielectric
  nanoparticles with high refractive indices.
\newblock {\em Trans. Amer. Math. Soc.}, 376(1):39--90, 2023.

\bibitem{ando2018spectral}
K.~Ando, Y.-G. Ji, H.~Kang, K.~Kim, and S.~Yu.
\newblock Spectral properties of the {N}eumann-{P}oincar\'{e} operator and
  cloaking by anomalous localized resonance for the elasto-static system.
\newblock {\em European J. Appl. Math.}, 29(2):189--225, 2018.

\bibitem{Bao2021adaptive}
G.~Bao, P.~Li, and X.~Yuan.
\newblock An adaptive finite element {D}t{N} method for the elastic wave
  scattering problem in three dimensions.
\newblock {\em SIAM J. Numer. Anal.}, 59(6):2900--2925, 2021.

\bibitem{Baumgartel1985analytic}
H.~Baumg{\"a}rtel.
\newblock Analytic perturbation theory for matrices and operators.
\newblock {\em Operator theory}, 15, 1985.

\bibitem{Calvo2015underwater}
D.~C. Calvo, A.~L. Thangawng, C.~N. Layman, R.~Casalini, and S.~F. Othman.
\newblock Underwater sound transmission through arrays of disk cavities in a
  soft elastic medium.
\newblock {\em J. Acoust. Soc. Am.}, 138(4):2537--2547, 2015.

\bibitem{CLHGMS2010}
B.~Casse, W.~Lu, Y.~Huang, E.~Gultepe, L.~Menon, and S.~Sridhar.
\newblock Super-resolution imaging using a three-dimensional metamaterials
  nanolens.
\newblock {\em Appl. Phys. Lett.}, 96(2), 2010.

\bibitem{DtN:Stephanie}
S.~Chaillat, M.~Darbas, and F.~Le~Lou\"{e}r.
\newblock Approximate local {D}irichlet-to-{N}eumann map for three-dimensional
  time-harmonic elastic waves.
\newblock {\em Comput. Methods Appl. Mech. Engrg.}, 297:62--83, 2015.

\bibitem{DURGAextraction}
D.~P. Challa, D.~Gangadaraiah, and M.~Sini.
\newblock Extraction of the mass density using elastic fields generated by
  injected highly dense small scaled inclusions.
\newblock {\em arXiv preprint arXiv:2305.04317}, 2023.

\bibitem{chang2007spectral}
T.~K. Chang and H.~J. Choe.
\newblock Spectral properties of the layer potentials associated with
  elasticity equations and transmission problems on {L}ipschitz domains.
\newblock {\em J. Math. Anal. Appl.}, 326(1):179--191, 2007.

\bibitem{BochaoE}
B.~Chen, Y.~Gao, and H.~Liu.
\newblock Modal approximation for time-domain elastic scattering from
  metamaterial quasiparticles.
\newblock {\em J. Math. Pures Appl. (9)}, 165:148--189, 2022.

\bibitem{Ding2007Metamaterial}
Y.~Ding, Z.~Liu, C.~Qiu, and J.~Shi.
\newblock Metamaterial with simultaneously negative bulk modulus and mass
  density.
\newblock {\em Phys. Rev. Lett.}, 99:093904, Aug 2007.

\bibitem{Fepponmodal}
F.~Feppon and H.~Ammari.
\newblock Modal decompositions and point scatterer approximations near the
  {M}innaert resonance frequencies.
\newblock {\em Stud. Appl. Math.}, 149(1):164--229, 2022.

\bibitem{FA2024}
F.~Feppon and H.~Ammari.
\newblock Subwavelength resonant acoustic scattering in fast time-modulated
  media.
\newblock {\em J. Math. Pures Appl. (9)}, 187:233--293, 2024.

\bibitem{FCA2023}
F.~Feppon, Z.~Cheng, and H.~Ammari.
\newblock Subwavelength resonances in one-dimensional high-contrast acoustic
  media.
\newblock {\em SIAM J. Appl. Math.}, 83(2):625--665, 2023.

\bibitem{DtN:Gunter}
G.~K. G\"{a}chter and M.~J. Grote.
\newblock Dirichlet-to-{N}eumann map for three-dimensional elastic waves.
\newblock {\em Wave Motion}, 37(3):293--311, 2003.

\bibitem{Gao2018Analysis}
Y.~Gao, P.~Li, and Y.~Li.
\newblock Analysis of time-domain elastic scattering by an unbounded structure.
\newblock {\em Math. Methods Appl. Sci.}, 41(16):7032--7054, 2018.

\bibitem{Kato1995perturbation}
T.~Kato.
\newblock {\em Perturbation theory for linear operators}.
\newblock Classics in Mathematics. Springer-Verlag, Berlin, 1995.
\newblock Reprint of the 1980 edition.

\bibitem{KLSW2014}
R.~V. Kohn, J.~Lu, B.~Schweizer, and M.~I. Weinstein.
\newblock A variational perspective on cloaking by anomalous localized
  resonance.
\newblock {\em Comm. Math. Phys.}, 328(1):1--27, 2014.

\bibitem{AAMYB2016}
A.~I. Kuznetsov, A.~E. Miroshnichenko, M.~L. Brongersma, Y.~S. Kivshar, and
  B.~Luk’yanchuk.
\newblock Optically resonant dielectric nanostructures.
\newblock {\em Sci.}, 354(6314):aag2472, 2016.

\bibitem{LPLFLT2015}
M.~Lanoy, R.~Pierrat, F.~Lemoult, M.~Fink, V.~Leroy, and A.~Tourin.
\newblock Subwavelength focusing in bubbly media using broadband time reversal.
\newblock {\em Phys. Rev. B}, 91:224202, Jun 2015.

\bibitem{LLL2018}
H.~Li, J.~Li, and H.~Liu.
\newblock On novel elastic structures inducing polariton resonances with finite
  frequencies and cloaking due to anomalous localized resonances.
\newblock {\em J. Math. Pures Appl. (9)}, 120:195--219, 2018.

\bibitem{Li2022minnaert}
H.~Li, H.~Liu, and J.~Zou.
\newblock Minnaert resonances for bubbles in soft elastic materials.
\newblock {\em SIAM J. Appl. Math.}, 82(1):119--141, 2022.

\bibitem{LZ2024}
H.~Li and J.~Zou.
\newblock Mathematical theory on dipolar resonances of hard inclusions within a
  soft elastic material.
\newblock {\em arXiv preprint arXiv:2310.12861}, 2023.

\bibitem{Li2019Inverse}
P.~Li and X.~Yuan.
\newblock Inverse obstacle scattering for elastic waves in three dimensions.
\newblock {\em Inverse Probl. Imaging}, 13(3):545--573, 2019.

\bibitem{DtN:Li}
P.~Li and X.~Yuan.
\newblock Convergence of an adaptive finite element {D}t{N} method for the
  elastic wave scattering by periodic structures.
\newblock {\em Comput. Methods Appl. Mech. Engrg.}, 360:112722, 21, 2020.

\bibitem{Liu2005Analytic}
Z.~Liu, C.~T. Chan, and P.~Sheng.
\newblock Analytic model of phononic crystals with local resonances.
\newblock {\em Phys. Rev. B}, 71:014103, Jan 2005.

\bibitem{LZMZYCS2000}
Z.~Liu, X.~Zhang, Y.~Mao, Y.~Zhu, Z.~Yang, C.~T. Chan, and P.~Sheng.
\newblock Locally resonant sonic materials.
\newblock {\em Sci.}, 289(5485):1734--1736, 2000.

\bibitem{Mclean2000strongly}
W.~McLean.
\newblock {\em Strongly elliptic systems and boundary integral equations}.
\newblock Cambridge University Press, Cambridge, 2000.

\bibitem{Mead1992Newton}
D.~G. Mead.
\newblock Newton's identities.
\newblock {\em Amer. Math. Monthly}, 99(8):749--751, 1992.

\bibitem{Irina1999spectral}
I.~Mitrea.
\newblock Spectral radius properties for layer potentials associated with the
  elastostatics and hydrostatics equations in nonsmooth domains.
\newblock {\em J. Fourier Anal. Appl.}, 5(4):385--408, 1999.

\bibitem{sheng2003locally}
P.~Sheng, X.~Zhang, Z.~Liu, and C.~Chan.
\newblock Locally resonant sonic materials.
\newblock {\em Physica B: Condensed Matter}, 338(1):201--205, 2003.
\newblock Proceedings of the Sixth International Conference on Electrical
  Transport and Optical Properties of Inhomogeneous Media.

\bibitem{suwa2007introduction}
T.~Suwa.
\newblock Introduction to complex analytic geometry.
\newblock In {\em Singularities in geometry and topology}, pages 161--220.
  World Sci. Publ., Hackensack, NJ, 2007.

\end{thebibliography}
 \end{document}